\setlist[enumerate]{label=\rm{(\arabic*)}}
\setlist[enumerate,2]{label=\rm{(\roman*)}}
\setlist[itemize]{label=\raisebox{0.25ex}{\tiny$\bullet$}}
\renewcommand{\thefootnote}{\fnsymbol{footnote}}
\setlist[itemize]{leftmargin=*}
\setlist[enumerate]{leftmargin=*}
     \let\oldfootnote\footnote
     \def\footnote{\@ifstar\footnote@star\footnote@nostar}
     \def\footnote@star#1{{\let\thefootnote\relax\footnotetext{#1}}}
     \def\footnote@nostar{\oldfootnote}
\newcommand{\K}{\mathbb{K}}
\newcommand{\N}{\mathbf{N}}
\newcommand{\Z}{\mathbf{Z}}
\newcommand{\T}{\mathbb{T}}
\newcommand{\red}{\mathrm{red}}
\newcommand*{\defeq}{\mathrel{\rlap{%
                      \raisebox{0.3ex}{$\cdot$}}%
                      \raisebox{-0.3ex}{$\cdot$}}%
                      =}
\newcommand{\g}{\mathfrak{g}} 
\newcommand{\hhh}{\mathfrak{h}}
\newcommand{\Gm}{\mathbf{G}_m}
\newcommand{\Ga}{\mathbf{G}_a}
\DeclareMathOperator{\Supp}{Supp}
\DeclareMathOperator{\height}{ht}
\DeclareMathOperator{\Lie}{Lie}
\DeclareMathOperator{\SL}{SL}
\DeclareMathOperator{\Pic}{Pic}
\DeclareMathOperator{\ad}{ad}
\theoremstyle{plain}
\newtheorem{theorem}{Theorem}[section]
\newtheorem{lemma}[theorem]{Lemma}
\newtheorem{proposition}[theorem]{Proposition}
\newtheorem{corollary}[theorem]{Corollary}
\newtheorem{theorem*}{Theorem}
\newtheorem{proposition*}[theorem*]{Proposition}
\newtheorem{lemma*}[theorem*]{Lemma}
\newtheorem*{proposition**}{Proposition}
\newtheorem*{conventions*}{Conventions}
\newtheorem{comment*}{Comment}
\theoremstyle{definition}
\newtheorem{definition}[theorem]{Definition}
\newtheorem{remark}[theorem]{Remark}
\newtheorem{example}[theorem]{Example}
\numberwithin{equation}{section}
\begin{document}

\title{Horospherical subgroups in positive characteristic}

\author{Matilde Maccan}
    \address{Fakultät für Mathematik, RUB, Universitätsstr. 150, 44801 Bochum, Germany}
	\email{matilde.maccan@ruhr-uni-bochum.de} 

\author{Ronan Terpereau}
\address{Universit\'e de Lille, CNRS, UMR 8524 -- Laboratoire Paul Painlev\'e, F-59000 Lille, France}
\email{ronan.terpereau@univ-lille.fr}

\date{\today}

\begin{abstract}
We investigate horospherical homogeneous spaces---a class of spherical homogeneous spaces encompassing both flag varieties and algebraic tori---over {algebraically closed} fields of characteristic $p>0$, and establish their complete classification for $p > 2$.
\end{abstract}

\subjclass[2020]{Primary 14M17, 14M27, 14L15; Secondary 14M15, 17B45, 14G17}

\keywords{horospherical subgroups, parabolic subgroups, homogeneous spaces, group schemes, Lie algebras, Frobenius endomorphism}

\maketitle
\setcounter{tocdepth}{1}
\tableofcontents

\section{Introduction}
The main goal of this article is to initiate the study of horospherical varieties over {algebraically closed} fields of positive characteristic. 
Horospherical varieties form a natural subclass of spherical varieties, encompassing both flag varieties and toric varieties, yet their combinatorial structure is considerably more tractable. The combinatorial classification proceeds in two stages: first, one classifies the underlying homogeneous spaces (the focus of the present article); next, one applies Luna–Vust theory to classify equivariant embeddings of a fixed homogeneous space—a program already carried out by Knop~\cite{Kno91} over an arbitrary algebraically closed base field. 

Building on this, our ultimate aim (in a forthcoming paper) is to classify smooth projective horospherical varieties with Picard number one, in analogy with Pasquier's work~\cite{Pas09} over {algebraically closed} fields of characteristic zero, which produced new families of almost homogeneous varieties with rich and intricate geometry (see, e.g., \cite{Kim17,KP18,GPPS22,PM23}).

\smallskip

We now turn to the main definitions and setup.
Let $G$ be a (connected) reductive group over an algebraically closed field of characteristic $p > 0$. 
If $H$ is a subgroup (scheme) of $G$, we denote by $N_G(H)$ the normalizer of $H$ in $G$.
A subgroup $H \subset G$ is called \emph{horospherical} if it contains a maximal unipotent smooth connected subgroup of $G$. 
A subgroup $H$ is called \emph{strongly horospherical} if, in addition, $N_G(H)$ is a parabolic subgroup of $G$.

Note that if we were in characteristic $0$, the notions of horospherical subgroups and strongly horospherical subgroups would align, as the normalizer of any horospherical subgroup is then necessarily parabolic (see \cite[Proposition 2.2]{Pas08}). This is not necessarily the case in positive characteristic as the next example shows.

\begin{example}
    \label{Binfinity}
    Let $p=2$ and consider the following connected subgroup of $\SL_2$, whose definition can be found in \cite[\S~2]{Knop}:
    \[
    H=\mathcal{B}(\infty):= \left\{
    \begin{bmatrix}
        a & b \\ a+a^3 & a^3+ (1+a^2)b
    \end{bmatrix} \ \middle| \  a^4 = 1
    \right\} \subset \SL_2.
    \]
Setting $a=1$, we see that $H$ indeed contains $U=H_{\red}$; however, an explicit computation done in loc.~cit.~shows that the normalizer of $H$ is equal to $H$ itself. In particular, $H$ is horospherical but not strongly horospherical. Note also that $\Lie(H)=\Lie(B)$ is $T$-stable (for the restriction of the adjoint action), where $B \subset \SL_2$ is a Borel subgroup and $T \subset B$ is a maximal torus, although $H$ itself is not normalized by $T$.
\end{example}

However, and quite surprisingly, it turns out that the existence of horospherical but not strongly horospherical subgroups occurs only in characteristic 
$2$. The main result of this article is as follows:

\begin{theorem}[{Theorem~\ref{th: general case}}]
Assume that $p \geq 3$. Let $G$ be a reductive group and let $H$ be a subgroup of $G$.
Then $H$ is horospherical if and only if $H$ is strongly horospherical.
\end{theorem}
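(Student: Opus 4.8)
The implication ``strongly horospherical $\Rightarrow$ horospherical'' is immediate, so the content is the converse. Assume $H$ is horospherical, fix a Borel subgroup $B = T \ltimes U$ of $G$ with $U = R_u(B) \subseteq H$, and set $\mathfrak{g} = \Lie(G)$ and $\mathfrak{h} = \Lie(H) \supseteq \mathfrak{u} := \Lie(U)$. Since $U \subseteq H \subseteq N_G(H)$ and $B = \langle T, U\rangle$, the Borel $B$ normalizes $H$ as soon as $T$ does; so it suffices to prove that, for $p \geq 3$, such an $H$ is normalized by $T$, i.e.\ $N_G(H) \supseteq B$. This is exactly what fails in characteristic $2$: in Example~\ref{Binfinity} one has $\mathcal{B}(\infty) \supseteq U$ but $\mathcal{B}(\infty)$ is \emph{not} normalized by $T$ (indeed $N_{\SL_2}(\mathcal{B}(\infty)) = \mathcal{B}(\infty)$), although $\Lie(\mathcal{B}(\infty)) = \Lie(B)$ \emph{is} $T$-stable; the obstruction is purely infinitesimal.

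The plan is the following. First I would analyse $\mathfrak{h}$: it is an $\operatorname{Ad}(T)$-stable Lie subalgebra containing $\mathfrak{u}$, and one shows (in good characteristic, arguing through the $\SL_2$'s attached to the simple roots) that $\mathfrak{h}$ is an ideal of a standard parabolic subalgebra $\mathfrak{p}_I$, determined by $I$ together with a subspace of the centre of the Levi; in particular $N_G(\mathfrak{h}) \supseteq P_I \supseteq B$. The \emph{crucial step}, and the only place where $p \geq 3$ is genuinely used, is then to show that $H$ cannot escape the corresponding parabolic: that $H$ is contained in a parabolic subgroup $P$ of $G$ (possibly a non-reduced parabolic subgroup scheme, as happens e.g.\ in type $G_2$ for $p = 3$) with $R_u(P) \subseteq H$. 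The heart of this is the rank-one model computation: for $p \geq 3$, the adjoint action of $\left(\begin{smallmatrix} x & y \\ z & w\end{smallmatrix}\right) \in \SL_2$ on $\mathfrak{b}$ forces, besides $z^2 = 0$, the relation $2zw = 0$, hence $zw = 0$, and then $z = z(xw - yz) = x\,(zw) - y\,(z^2) = 0$, so that $N_{\SL_2}(\mathfrak{b}) = B$; for $p = 2$ the factor $2$ is lost, $N_{\SL_2}(\mathfrak{b})$ acquires an infinitesimal thickening, and precisely this is what $\mathcal{B}(\infty)$ exploits to leave $B$. (The bad primes of the non-simply-laced and exceptional types, in particular $p = 3$ in $G_2$ — where $\mathfrak{h}$ is less rigid and an exceptional isogeny intervenes — must be inspected separately, with $P$ a parabolic subgroup scheme.)

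Once $R_u(P) \subseteq U \subseteq H \subseteq P$ is in hand, write $P = R_u(P) \rtimes L$; then $H = R_u(P) \rtimes H_L$ with $H_L := H \cap L$, and hence $N_G(H) = R_u(P) \rtimes N_L(H_L)$, while $H_L \supseteq U \cap L$ is horospherical in the reductive group $L$. If $P \neq G$ then $\dim L < \dim G$ and, by induction on $\dim G$, $H_L$ is strongly horospherical in $L$, so $N_L(H_L)$ contains a Borel of $L$; therefore $N_G(H)$ contains a Borel of $G$ and $H$ is strongly horospherical. If $P = G$, then $\mathfrak{h}$ is an ideal of $\mathfrak{g}$ containing $\mathfrak{u}$, which (using $\langle\alpha,\alpha^\vee\rangle = 2 \neq 0$) forces $\mathfrak{h} \supseteq [\mathfrak{g},\mathfrak{g}]$; here one finishes by a descent along Frobenius kernels of the derived group, which either exhibits $H$ as normal in $G$ or produces a situation already covered by the dimension induction.

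The whole difficulty is thus concentrated in the crucial step of the second paragraph — controlling how an arbitrarily non-reduced horospherical subgroup can sit inside $N_G(\mathfrak{h})$, i.e.\ ruling out the infinitesimal ``escape'' that $\mathcal{B}(\infty)$ realises for $p = 2$. Once that is done, the remainder is a routine parabolic induction together with Frobenius-kernel bookkeeping.
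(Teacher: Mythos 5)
Your reduction to showing $T \subseteq N_G(H)$ is sound, and your rank-one computation of $N_{\SL_2}(\mathfrak{b})$ correctly isolates where the prime $2$ first intervenes at the Lie-algebra level. But there is a genuine gap at the step you yourself call crucial, and the mechanism you offer there cannot close it. Knowing that $\mathfrak{h}=\Lie(H)$ is $T$-stable (itself a substantial claim: it occupies all of Section~\ref{sec: Lie algebras sec} of the paper, and is actually \emph{false} for a family of restricted subalgebras $\mathfrak{h}_\lambda$ in type $G_2$, $p=3$, which must be excluded by a separate argument using the very special isogeny) only controls the first Frobenius kernel ${}_1H$, via Theorem~\ref{th: DG height one}. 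It says nothing about how $H$ sits as an extension of $F(H)$ by ${}_1H$, and that is exactly where the pathology lives: your own counterexample $\mathcal{B}(\infty)$ has $\Lie(\mathcal{B}(\infty))=\mathfrak{b}$ perfectly $T$-stable and ${}_1\mathcal{B}(\infty)={}_1B$, yet is not normalized by $T$ --- the twisting occurs between the first and second Frobenius levels. Hence the identity $N_{\SL_2}(\mathfrak{b})=B$ for $p\geq 3$ does not by itself rule out a higher-height analogue of $\mathcal{B}(\infty)$. One needs either Knop's full classification of subgroups of $\SL_2$ (Theorem~\ref{thm: Knop horo subgroups SL2}) or, as the paper does, an induction on the \emph{height} of $H$: show $F(H)$ is strongly horospherical by the inductive hypothesis, build an explicit untwisted model $H'=\bigcap_\chi\ker(\chi)$ inside the parabolic $P=HT$ from its character lattice, and prove $H=H'$ by matching ${}_1H={}_1H'$ (the Lie-algebra level) and $H'\subseteq H$ (where Lemma~\ref{lem heightr}, the limit argument producing $u_{-\gamma}(x^2)\in H$, is the real group-level use of $p\neq 2$). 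None of this bookkeeping appears in your plan except as the unspecified ``descent along Frobenius kernels'' in the $P=G$ branch.

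The second structural problem is your induction on $\dim G$ via $P=R_u(P)\rtimes L$. Non-reduced parabolic subgroup schemes occur for every $G$ and every $p$ (e.g.\ ${}_rG\,B$), not only in type $G_2$ for $p=3$, and for such $P$ there is no Levi decomposition $P=R_u(P)\rtimes L$ --- one only has the scheme-theoretic product $P=U_P^-\times P_{\mathrm{red}}$ of \eqref{product}. Consequently every horospherical $H$ whose intersections $H\cap U_{-\gamma}$ are nontrivial infinitesimal groups (the generic non-smooth case) falls outside your inductive framework. Moreover, the asserted identity $N_G(H)=R_u(P)\rtimes N_L(H_L)$ presupposes $N_G(H)\subseteq P$, which is not justified at that stage. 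The parts of your sketch that survive are the reduction to $T$-normalization, the smooth connected case (which does reduce to $T$-stability of $\mathfrak{h}$), and the identification of $G_2$, $p=3$ as requiring special care; the core of the theorem --- excluding infinitesimal twisting above height one --- still needs an argument.
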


Moreover, when $p \geq 3$, any strongly horospherical subgroup $H \subset G$ satisfies that $N_G(H)/H$ is a torus (see Proposition~\ref{prop P/H is a torus}), whereas this is not always the case when $p=2$ (see Example~\ref{ex: not always a torus when p=2}). Consequently, over a field of characteristic $p \geq 3$, horospherical subgroups are easily classified:

\begin{corollary}[{Corollary~\ref{cor: classification strongly horo}}]
Assume that $p \geq 3$ {and denote by $\mathcal{P}$ the set of functions $\Phi^+ \to \mathbb{N} \cup \{\infty\}$ associated with parabolic subgroups of $G$ (see Section~\ref{sec: associated function}).} 
There is a one-to-one correspondence between the set of conjugacy classes of horospherical subgroups of $G$ and the set of pairs $(\varphi, M)$, where $\varphi \in \mathcal{P}$ and $M$ is a sublattice of $X^\ast(P_\varphi)$, with $P_\varphi$ the standard parabolic subgroup of $G$ associated with the function $\varphi$.
\end{corollary}

\begin{remark}
It would be interesting to extend the classification of horospherical subgroups of $G$ to the case $p=2$. In this setting two additional difficulties occur: 
\begin{itemize}
\item There exist strongly horospherical subgroups $H \subset G$ such that $N_G(H)/H$ is not a torus (see Example~\ref{ex: not always a torus when p=2}). This issue can be circumvented by replacing $N_G(H)$ with the \emph{smaller} parabolic subgroup $HT$ (see Remark~\ref{rk: classification in the case p=2}).
\item More seriously, there exist horospherical subgroups of $G$ that are not strongly horospherical (see Example~\ref{Binfinity}). Hence restricting to strongly horospherical subgroups is not sufficient.
\end{itemize}
\end{remark}

Let us note that, even when $p \neq 2$, the situation over a field of characteristic $p>0$ is much richer than over a field of characteristic $0$. Consider, for instance, the toy case $G=\SL_2$. Over a field of characteristic $0$, any non-trivial horospherical subgroup of $\SL_2$ is conjugate either to a Borel subgroup $B \subset \SL_2$ or to $\boldsymbol{\mu}_n \ltimes U$, with $U := R_u(B)$ {and $\boldsymbol{\mu}_n$ the subgroup of the multiplicative group $\Gm$ defined by $t^n = 1$}. In contrast, over a field of characteristic $p \geq 3$, any non-trivial horospherical subgroup of $\SL_2$ is conjugate \emph{to the pullback, via an iterated Frobenius morphism}, of either $B$ or $\boldsymbol{\mu}_n \ltimes U$ for some $n \geq 1$ (see Section~\ref{sec: SL2 case}). 

\medskip

As mentioned earlier, now that a complete classification of horospherical homogeneous spaces has been achieved (for $p \geq 3$), we plan in forthcoming work to study smooth projective horospherical varieties of Picard number one, with the goal of identifying new and interesting families—beyond flag varieties—that do not occur in characteristic $0$.

\subsection{Outline of the article}
Section~\ref{sec: parabolic groups} reviews the classification of parabolic subgroup schemes for $p \geq 3$, together with a description of their character lattices. 
In Section~\ref{sec: Lie algebras sec}, we show that almost all Lie subalgebras of $\Lie(G)$ containing $\Lie(U)$ are $T$-stable when $p \geq 3$. 
Section~\ref{sec: strongly horo subgroups} is devoted to strongly horospherical subgroups of $G$: we establish some basic properties and then provide a complete classification in the case $p \geq 3$ (Corollary~\ref{cor: classification strongly horo}). 
Finally, Section~\ref{sec: main section} presents the proof of our main result (Theorem~\ref{th: general case}).

\subsection{Notation and setting}
We work over an algebraically closed field $\K$ of characteristic $p > 0$.  
Throughout, we consider group schemes of finite type over $\K$, so all subgroups are understood as subgroup \emph{schemes} and are not necessarily smooth. For background on the theory of group schemes, we primarily refer to \cite{DG} and \cite{Milne}, and to a lesser extent \cite{Jan}.

Let $G$ be a (smooth) connected reductive algebraic group.  
{Since any connected reductive algebraic group $G$ admits a finite central isogeny from a product $C \times G'$, where $C \subset G$ is a maximal central torus and $G' \subset G$ is the derived subgroup, we may assume from now on, without loss of generality, that $G \simeq C \times G'$, where $C$ is a torus and $G'$ is a simply-connected semisimple group.}

We fix once and for all a Borel subgroup $B \subset G$ and a maximal torus $T \subset B$. Let $U$ denote the unipotent radical of $B$. {A \emph{parabolic} subgroup is a subgroup $P \subset G$ containing a Borel subgroup of $G$; since we are working over an algebraically closed field, this is equivalent to the homogeneous space $G/P$ being projective. Parabolic subgroups are \emph{not} assumed to be smooth in general.} Given a subgroup $H \subset G$, we denote by $N_G(H)$ the (scheme-theoretic) normalizer of $H$ in $G$, {and by $H_{\mathrm{red}}$ its associated reduced subscheme, which is a subgroup of $H$.}

Let $\Phi \supset \Phi^+ \supset \Delta$ be the sets of roots, positive roots, and simple roots of $G$ with respect to $T$ and $B$, respectively. For each root $\gamma \in \Phi$, let $U_\gamma$ denote the corresponding root subgroup, $u_\gamma \colon \mathbb{G}_a \to U_\gamma$ the associated root homomorphism, and $\mathfrak{g}_\gamma$ the corresponding root subspace in $\g = \Lie(G)$. For a simple root $\alpha \in \Delta$, let $P^\alpha$ denote the maximal smooth parabolic subgroup not containing $U_{-\alpha}$.

Let $F^m \colon G \to G^{(m)}$ be the $q$-th (relative) Frobenius morphism, where $q = p^m$ for some $m \geq 1$. Its kernel is denoted by ${}_m G$. If $H$ is a subgroup of $G$, then $F^m$ induces a short exact sequence of $\K$-group schemes
\[
1 \to {}_m H \to H \to F^m(H) \to 1.
\]
Note that $F^m(H) \subset H^{(m)}$ with equality when $H$ is smooth (see e.g.~\cite[Part I, \S~9.5]{Jan}).
Given a subgroup $H$ of $G$, we call \emph{height} of $H$ the smallest integer $m$ such that $F^m(H) \simeq H/{}_m H$ is smooth. In particular, a connected finite group scheme $J$ has height $m$ if $m$ is the smallest integer such that $J={}_mJ$. 
For any integer $n \geq 1$, we denote by $\boldsymbol{\mu}_n$ the subgroup of the multiplicative group $\Gm$ defined by $t^n = 1$; in particular, $\boldsymbol{\mu}_{p^r} = {}_r \Gm$. We denote by $\boldsymbol{\alpha}_{p^r}$ the $r$-th Frobenius kernel of the additive group $\Ga$. {A group scheme is called \emph{infinitesimal} if its associated reduced subscheme consists of a single point; this is equivalent to the group scheme having finite height.}

{For the notion of a \emph{restricted Lie algebra} (or \emph{$p$-Lie algebra}), which will be used throughout this article, we refer to \cite[Definition 10.39]{Milne}.
Recall that a restricted Lie algebra is a Lie algebra $\g$  endowed with a map $\g \to \g,\ x \mapsto x^{[p]}$, satisfying certain compatibility conditions. In particular, whenever a Lie algebra arises from an algebraic group (in characteristic $p$), it carries a natural restricted structure.}

\section{Parabolic subgroups and classification for \texorpdfstring{$p \geq 3$}{p>2}}\label{sec: parabolic groups}
Over an algebraically closed field of characteristic $0$, it is well known that any horospherical subgroup is normalized by a parabolic subgroup, and that this parabolic subgroup is uniquely determined, up to conjugation, by a subset of the simple roots of $G$. In positive characteristic, the situation becomes more intricate; nonetheless, parabolic subgroups remain crucial in the study of horospherical subgroups. To address this, we begin by presenting the classification of parabolic subgroups for $p \geq 3$, which synthesizes several classical results. Specifically, this classification builds upon the works of \cite{Wenzel,HL93} for the case $p \geq 5$ and of \cite{Maccan,Maccan2} to cover the remaining cases when $p=3$.

\subsection{Associated function} \label{sec: associated function}
Let $G$ {be a simply-connected semisimple group} over $\K$.
For a (non-necessarily smooth) parabolic subgroup $P$ with reduced part $P_{\rm{red}}$, we denote as
\begin{align}
\label{infinitesimal}
U_P^- \defeq P \cap R_u(P_{\rm{red}}^-)
\end{align}
its intersection with the unipotent radical of the opposite reduced parabolic of $P_{\rm{red}}$. The subgroup $U_P^-$ is unipotent, infinitesimal and satisfies
\begin{align}
\label{product}
U_P^- = \prod_{\gamma \in \Phi^+ \backslash \Phi_I} (U_P^- \cap U_{-\gamma}) \quad \text{and} \quad P = U_P^- \times P_{\rm{red}},
\end{align}
where both identities are isomorphisms of schemes given by the multiplication of $G$; {see \cite[Lemma~2 and Proposition~4]{Wenzel}}. Thus, $P$ can be recovered from its reduced part $P_{\rm{red}}$, together with its intersections with all the root subgroups contained in the opposite unipotent radical $R_u(P_{\rm{red}}^-)$. Let us reformulate this statement in a more combinatorial fashion, introducing a numerical function. {This follows \cite[Notation 9]{Wenzel}.} Recall that we denote the kernel of the $n$-th iterated Frobenius of the additive group $\Ga$ as $\boldsymbol{\alpha}_{p^n}$; while  $\boldsymbol{\alpha}_{p^\infty}$ is understood to be $\Ga$.

\begin{definition}
\label{def_varphi}
    Let $P$ be a parabolic subgroup of $G$ containing $B$. The \emph{associated function} 
    \[
    \varphi \colon \Phi^+ \longrightarrow \N \cup \{\infty\}
    \]
    is defined by the formula
    \[
 \forall \gamma \in \Phi^+,\ \ P \cap U_{-\gamma}= u_{-\gamma} ({\boldsymbol{\alpha}}_{p^{\varphi(\gamma)}}).
\]
\end{definition}

In other words, any positive root $\gamma$ not belonging to the root system of the Levi subgroup $P_{\mathrm{red}} \cap P_{\mathrm{red}}^-$ is sent to the natural number corresponding to the height of the finite unipotent subgroup $P \cap U_{-\gamma}$, while all other roots are sent to infinity. 

For instance, {in characteristic zero, or when $P$ is a smooth parabolic subgroup, the associated function $\varphi$ takes only the values $0$ and $\infty$. As a first example specific to positive characteristic, let us mention that} the function associated to the parabolic subgroup $_mG P^\alpha$ sends all positive roots to $\infty$, except for those whose support contains $\alpha$, which are assigned the value $m$.

\begin{theorem}[{\cite[Theorem 10]{Wenzel}}]
\label{thm:numericalfunction}
The parabolic subgroup $P$ is uniquely determined by the function $\varphi$, with no assumption on the characteristic or on the Dynkin diagram of $G$.
\end{theorem}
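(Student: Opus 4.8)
The plan is to reconstruct $P$ from $\varphi$ in two separate pieces---its reduced part $P_{\red}$ and the infinitesimal subgroup $U_P^-$ of \eqref{infinitesimal}---and then to recombine them through the isomorphism $P = U_P^- \times P_{\red}$ of \eqref{product}. Concretely, it suffices to prove that if $P$ and $P'$ are parabolic subgroups of $G$ containing $B$ with the same associated function $\varphi$, then $P_{\red} = P'_{\red}$ and $U_P^- = U_{P'}^-$ as closed subschemes of $G$; the identity $P = U_P^- \times P_{\red}$ then forces $P = P'$. As will be clear, no assumption on $p$ or on the type of $G$ enters, in contrast to the (much harder) question of which functions $\varphi$ actually arise.

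First I would recover the reduced part. Since $B \subseteq P$, the reduced subscheme $P_{\red}$ is a smooth connected subgroup of $G$ containing $B$, hence the standard parabolic subgroup $P_I$ attached to a unique subset $I \subseteq \Delta$, with Levi root system $\Phi_I$. For $\gamma \in \Phi^+$ one has $\varphi(\gamma) = \infty$ exactly when $P \cap U_{-\gamma} = U_{-\gamma}$, i.e.\ when $U_{-\gamma} \subseteq P$; because $U_{-\gamma}$ is smooth and connected this is equivalent to $U_{-\gamma} \subseteq P_{\red} = P_I$, which holds iff $\gamma \in \Phi_I$. Hence $\Phi_I \cap \Phi^+ = \varphi^{-1}(\infty)$, so $\Phi_I$, the subset $I$, and thus $P_{\red} = P_I$ are all determined by $\varphi$.

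Next I would recover $U_P^-$. For $\gamma \in \Phi^+ \setminus \Phi_I$ the root subgroup $U_{-\gamma}$ is contained in $R_u(P_{\red}^-)$, so from $U_P^- = P \cap R_u(P_{\red}^-)$ (this is \eqref{infinitesimal}) and Definition~\ref{def_varphi} we get
\[
U_P^- \cap U_{-\gamma} \;=\; P \cap U_{-\gamma} \;=\; u_{-\gamma}\bigl(\boldsymbol{\alpha}_{p^{\varphi(\gamma)}}\bigr),
\]
which depends only on $\varphi$ (the set $\Phi^+ \setminus \Phi_I$ having already been reconstructed). By \eqref{product} the multiplication morphism of $G$ identifies $U_P^-$ with the product, taken over $\gamma \in \Phi^+ \setminus \Phi_I$, of these intersections; applying this to $P$ and to $P'$ with the same factors (and the same ordering convention) yields $U_P^- = U_{P'}^-$ as subschemes of $G$. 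Together with $P_{\red} = P'_{\red}$ and $P = U_P^- \times P_{\red}$, this gives $P = P'$.

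The combinatorial part above is routine; the step that genuinely needs to be in place is the scheme-theoretic structure theory underlying \eqref{product} and Definition~\ref{def_varphi}: one must know that each intersection $P \cap U_{-\gamma}$ is precisely the infinitesimal subgroup $u_{-\gamma}(\boldsymbol{\alpha}_{p^{\varphi(\gamma)}})$---rather than a more exotic subgroup scheme of $U_{-\gamma} \cong \Ga$---and that the product decomposition of $U_P^-$, as well as the decomposition $P = U_P^- \times P_{\red}$, is an isomorphism of schemes and not merely a bijection on points. These are exactly the facts recalled in Section~\ref{sec: associated function}, so once they are granted the uniqueness follows as above.
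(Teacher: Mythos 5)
Your argument is correct and follows essentially the same route the paper itself indicates: the theorem is stated as a citation of Wenzel, justified by the scheme-theoretic decompositions \eqref{infinitesimal} and \eqref{product} recalled just before Definition~\ref{def_varphi}, and your proof is exactly the reconstruction of $P$ from $P_{\red}$ (read off from $\varphi^{-1}(\infty)$) and from the intersections $P \cap U_{-\gamma} = u_{-\gamma}(\boldsymbol{\alpha}_{p^{\varphi(\gamma)}})$. You also correctly isolate where the real content lies, namely in the structure theory guaranteeing that these intersections are of the stated form and that the product decompositions are isomorphisms of schemes.
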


The following is (probably) a well-known result for experts, but for which the authors could not locate any reference in the literature. In the case of a reduced parabolic subgroup, this is a result due to Chevalley; see \cite[Corollary 17.49]{Milne}.

\begin{corollary}
    \label{cor: normalizerthm}
    Let $P$ be a parabolic subgroup of $G$; then $P = N_G(P)$.
\end{corollary}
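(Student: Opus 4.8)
The plan is to reduce to the reduced case (Chevalley's theorem, \cite[Corollary 17.49]{Milne}, which gives $P_{\mathrm{red}} = N_G(P_{\mathrm{red}})$) by exploiting the decomposition $P = U_P^- \rtimes P_{\mathrm{red}}$ from \eqref{product} and the uniqueness statement of Theorem \ref{thm:numericalfunction}. First I would observe that $N_G(P) \supseteq P$ is automatic, so it suffices to prove $N_G(P) \subseteq P$; since $N_G(P)$ is a subgroup of $G$ containing the parabolic $P$ (hence in particular containing $B$), it is itself a parabolic subgroup of $G$, and moreover it is its own normalizer is not what we want — rather, the key point is that $N_G(P)$ contains $P$, so $N_G(P)_{\mathrm{red}} \supseteq P_{\mathrm{red}}$ is a reduced parabolic subgroup. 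I would then argue that $N_G(P)$ normalizes $P_{\mathrm{red}}$ as well: indeed, $P_{\mathrm{red}} = (P)_{\mathrm{red}}$ is characteristic in $P$ (the reduced subscheme of a group scheme of finite type over a perfect field is a subgroup scheme, functorial for isomorphisms), so any automorphism of $P$ — in particular conjugation by an element of $N_G(P)$ — preserves $P_{\mathrm{red}}$. Hence $N_G(P) \subseteq N_G(P_{\mathrm{red}}) = P_{\mathrm{red}} \subseteq P$ by Chevalley, and we are done.

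Actually, let me reorganize: the cleanest route is (1) $N_G(P)$ normalizes $P_{\mathrm{red}}$ because the latter is characteristic in $P$; (2) therefore $N_G(P) \subseteq N_G(P_{\mathrm{red}})$; (3) by Chevalley's theorem for reduced parabolics, $N_G(P_{\mathrm{red}}) = P_{\mathrm{red}}$; (4) combining, $N_G(P) \subseteq P_{\mathrm{red}} \subseteq P \subseteq N_G(P)$, forcing equality. One should double-check step (1): we need that $g P g^{-1} = P$ implies $g P_{\mathrm{red}} g^{-1} = P_{\mathrm{red}}$ at the level of $\K$-points and, more robustly, scheme-theoretically. This follows from the fact that for a group scheme $H$ of finite type over an algebraically closed (hence perfect) field, $H_{\mathrm{red}}$ is a closed subgroup scheme and the formation of $H_{\mathrm{red}}$ commutes with automorphisms of $H$ induced by automorphisms of the ambient field-preserving structure — in particular with inner automorphisms of $G$. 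So conjugation by $g \in N_G(P)$ restricts to an automorphism of $P$, which must carry $P_{\mathrm{red}}$ to $P_{\mathrm{red}}$.

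Alternatively, and perhaps more in the spirit of the preceding discussion, one can give a proof via the associated function: $N_G(P)$ is a parabolic subgroup containing $B$ (after conjugating so that $B \subseteq P$), hence corresponds to some associated function $\psi$; conjugation by any $g \in N_G(P)$ fixes $P$, and in particular the reduced parabolic $N_G(P)_{\mathrm{red}}$ normalizes $P$, so normalizes each $P \cap U_{-\gamma}$; combined with $N_G(P)_{\mathrm{red}} = N_G(P_{\mathrm{red}})_{\mathrm{red}}$-type considerations and Theorem \ref{thm:numericalfunction}, one concludes $\psi = \varphi$ and hence $N_G(P) = P$. I would present the first argument as the main one since it is shorter and more transparent.

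The main obstacle I anticipate is purely a matter of rigor rather than depth: making precise the claim that $P_{\mathrm{red}}$ is preserved by all automorphisms of $P$ that arise from conjugation in $G$, i.e.\ that ``$\mathrm{red}$'' is functorial in the relevant sense over the algebraically closed field $\K$. Once that functoriality is cited correctly, the reduction to Chevalley's theorem is immediate and the rest is formal. There is no serious computation involved.
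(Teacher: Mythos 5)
Your argument has a genuine gap, and in fact its conclusion is internally inconsistent. The key step --- $N_G(P)\subseteq N_G(P_{\mathrm{red}})$ because ``$P_{\mathrm{red}}$ is characteristic in $P$'' --- fails for the \emph{scheme-theoretic} normalizer, which is what $N_G(-)$ means throughout this paper. The formation of the reduced subscheme does not commute with the base change $\K\to R$ when $R$ is non-reduced: for $g\in N_G(P)(R)$, conjugation by $g$ is an automorphism of $P_R$ and preserves $(P_R)_{\mathrm{red}}$, but $(P_R)_{\mathrm{red}}$ is not $(P_{\mathrm{red}})_R$, so there is no reason for $g$ to normalize $P_{\mathrm{red}}$. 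Concretely, already $P$ itself need not normalize $P_{\mathrm{red}}$: for $G=\SL_2$ and $P={}_1G\cdot B$ one has $N_G(P_{\mathrm{red}})=N_G(B)=B$, yet $P\not\subseteq B$; so your chain $P\subseteq N_G(P)\subseteq N_G(P_{\mathrm{red}})=P_{\mathrm{red}}$ would give $P\subseteq P_{\mathrm{red}}$, i.e.\ $P$ smooth, contradicting the generality of the statement. What your argument does correctly prove is the $\K$-point (equivalently, reduced) statement $N_G(P)_{\mathrm{red}}=P_{\mathrm{red}}$, but that is the easy half; the entire content of the corollary is that the normalizer acquires no extra \emph{infinitesimal} directions.

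That infinitesimal part is exactly what the paper's proof addresses: it notes that $Q:=N_G(P)$ is a parabolic subgroup containing $P$, hence by Theorem \ref{thm:numericalfunction} is determined by the heights of the intersections $Q\cap U_{-\gamma}$, and then computes on the functor of points, inside the copy of $\SL_2$ attached to $\gamma$, that an $R$-point of $Q$ with lower-left entry $w$ must satisfy $w^{p^r}=0$ whenever $P\cap U_{-\gamma}$ has height $r$. That computation is valid over arbitrary (possibly non-reduced) $R$ and therefore pins down the scheme structure of $Q$. To salvage your approach you would need to supplement the reduction to Chevalley's theorem with a separate argument controlling the finite infinitesimal parts $N_G(P)\cap U_{-\gamma}$ (equivalently, the associated function of $Q$) --- which essentially brings you back to the paper's computation, and which your alternative sketch via the associated function gestures at but does not carry out.
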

\begin{proof}
    Let $Q = N_G(P)$; then $Q$ is a parabolic subgroup containing $P$, and in particular it is determined by the height of the intersections $Q \cap U_{-\gamma}$, where $\gamma$ ranges over the positive roots of $G$. To prove that $Q=P$, by \Cref{thm:numericalfunction} it is enough to show that the corresponding heights coincide. Assume that $P\cap U_{-\gamma}$ has height $r \in \N  \cup \{\infty\}$ and consider the copy of $\SL_2$ generated by $U_{-\gamma}$ and $U_\gamma$. Then assume (on the functor of points) that $c^{p^r}=0$ and compute
    \[
    \begin{bmatrix}
        x & y \\ w & z
    \end{bmatrix}
    \begin{bmatrix}
        a & b \\ c & d 
    \end{bmatrix}
    \begin{bmatrix}
        z & -y \\ -w & x
    \end{bmatrix} = \begin{bmatrix}
        a^\prime & b^\prime \\ c^\prime & d^\prime
    \end{bmatrix} = 
    \begin{bmatrix}
        a^\prime & b^\prime\\ waz+z^2c-w^2b-zdw & d^\prime
    \end{bmatrix}
    \]
    If we want the matrix on the right hand side to be still in $P$, we need
    \[
    0 = (c^\prime)^{p^r} = (wz(a-d) - w^2b)^{p^r}.
    \]
    Since there are no particular conditions on the variable $a,b,d$ (other than the fact that the corresponding matrix above must belong to $\SL_2$) we get that $w^{p^r} = 0$, i.e that the subgroup $Q \cap U_{-\gamma}$ has height $r$, as wanted.
    \end{proof}

\subsection{A very special isogeny} \label{sec: very special isogeny}
Assume that $G$ is simple and that its Dynkin diagram contains a single edge of multiplicity $p$: that is, either $G$ is of type $B_n$, $C_n$, or $F_4$ in characteristic $p = 2$, or $G$ is of type $G_2$ in characteristic $p = 3$. Under these assumptions, one can define the so-called \emph{very special isogeny} of $G$ as the quotient
\[
\pi_G \colon G \longrightarrow \overline{G}
\]
by a subgroup $K = K_G$ which is normal, non-central, {contained in the first Frobenius kernel}, and minimal with respect to these properties.

This subgroup is uniquely determined by its Lie algebra, which is defined as the smallest restricted Lie subalgebra containing the root subspaces corresponding to all \emph{short} roots. More precisely, the isogeny $\pi_G$ acts as Frobenius on the additive groups associated with short roots, and as the identity on those associated with long roots.

The quotient group $\overline{G}$ is simple and simply-connected, and its root system is dual to that of $G$. Moreover, the composition $\pi_{\overline{G}} \circ \pi_G$ equals the Frobenius morphism. For further details, see the original work of Borel and Tits \cite{BorelTits}, as well as \cite[Chapter~7]{CGP15}.

\subsection{Classification for \texorpdfstring{$p \geq 3$}{p>2}}
The following two results (Theorems~\ref{thm classification parabolic subgroups} and~\ref{thm classification parabolics 2}) provide a complete classification of the parabolic subgroups of a simply-connected semisimple group $G$ over an algebraically closed field of characteristic $p \geq 3$. {We will use this classification in the proof of the main result of this article (Theorem~\ref{th: general case}).}

Note that every parabolic subgroup of a product $G = G' \times G''$, where $G'$ and $G''$ are simply-connected simple groups, is the product of a parabolic subgroup of $G'$ and a parabolic subgroup of $G''$; {see \cite[Lemma~1.2]{Maccan2}, whose proof relies on the structure results of \Cref{thm:numericalfunction} and \cite[Theorem~8]{Wenzel}.}
This observation allows us to reduce to the case where $G$ is a simply-connected \emph{simple} group.

The first result concerns parabolic subgroups whose reduced part is maximal among reduced parabolic subgroups, and it was established in \cite{Maccan}.

\begin{theorem}\label{thm classification parabolic subgroups}
Assume that $p \geq 3$.  
Let $G$ be a simply-connected simple group, and let $P$ be a parabolic subgroup of $G$ such that $P_{\mathrm{red}} = P^\alpha$.  

If either $p \geq 5$, or $G$ is simply laced, or $p=3$ and $G$ is of type $B_n$, $C_n$, or $F_4$, then there exists an integer $r \geq 0$ such that  
\[
    P = {}_r G P^\alpha.
\]
Otherwise, $G$ is of type $G_2$ and $p=3$. In this case, there exists an integer $r \geq 0$ such that  
\[
    P = {}_r G P^\alpha 
    \quad \text{or} \quad 
    P = (F^r)^{-1}\bigl(K P^\alpha\bigr),
\]
where $K$ denotes the kernel of the very special isogeny 
$\pi_G \colon G \to \overline{G}$.
\end{theorem}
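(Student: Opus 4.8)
The plan is to reduce the classification of a parabolic subgroup $P$ with $P_{\mathrm{red}} = P^\alpha$ to a computation inside the rank-one (or rank-two) subgroups generated by the relevant root subgroups. By Theorem~\ref{thm:numericalfunction}, $P$ is determined by its associated function $\varphi$, which here is supported on the positive roots $\gamma$ whose support contains $\alpha$; for such $\gamma$ we have $P \cap U_{-\gamma} = u_{-\gamma}(\boldsymbol{\alpha}_{p^{\varphi(\gamma)}})$. So the whole problem is to pin down the values $\varphi(\gamma) \in \N \cup \{\infty\}$ and to show they are governed by a single integer $r$ (plus, in the $G_2$, $p=3$ case, a short-versus-long dichotomy coming from the very special isogeny). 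First I would record that $\varphi$ cannot be identically $\infty$ on the $\alpha$-support roots unless $P$ is reduced, and conversely: the point is that $P$ is not reduced exactly when at least one such $\varphi(\gamma)$ is finite.

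Next I would establish the key rigidity: if $\gamma$ and $\gamma'$ are positive roots, both containing $\alpha$ in their support, and $\gamma + \delta = \gamma'$ (or $\gamma - \delta = \gamma'$) for some positive root $\delta$ lying in the Levi $L = (P^\alpha)_{\mathrm{red}} \cap (P^\alpha)_{\mathrm{red}}^-$ (i.e.\ a root not involving $\alpha$), then $P \cap U_{-\gamma}$ and $P \cap U_{-\gamma'}$ must have \emph{comparable} heights, with the comparison dictated by the Chevalley commutator formula. Concretely, $U_\delta \subset L \subset P$, so $[U_\delta, P \cap U_{-\gamma}] \subset P$, and working in the appropriate $\SL_2$ or rank-two subgroup and taking functor-of-points computations (as in the proof of Corollary~\ref{cor: normalizerthm}) one forces $\varphi(\gamma') \le \varphi(\gamma)$ when the commutator structure constants are units in $\K$. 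Because $p \geq 3$ and the bond strength is at most $2$ (or $3$ only for $G_2$), these structure constants are units \emph{except} precisely along a $G_2$ multiple bond in characteristic $3$, where the relevant constant is divisible by $3$; this is the source of the exceptional case. Running this comparison along chains of roots in the connected Dynkin diagram of $G$, one concludes that $\varphi$ is constant, equal to some $r$, on all $\alpha$-support roots — giving $P = {}_r G\, P^\alpha$ — in every case except $G_2$ with $p=3$.

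For the exceptional case $G_2$, $p = 3$, the comparison argument still forces $\varphi$ to be constant along edges with a \emph{simple} bond, but across the triple bond the height of $P \cap U_{-\gamma}$ for $\gamma$ long can legitimately exceed that for $\gamma$ short by one, and no more. I would then observe that the very special isogeny $\pi_G \colon G \to \overline{G}$ acts as Frobenius on short-root subgroups and as the identity on long-root subgroups, so $K P^\alpha$ has associated function equal to $1$ on short roots containing $\alpha$ and $0$ on long roots containing $\alpha$ (after checking $\alpha$ is short for the relevant maximal $P^\alpha$'s, and handling the long simple root separately — there the only non-reduced option is already of the form ${}_r G\, P^\alpha$). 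Pulling back by $F^r$ multiplies all finite values of $\varphi$ by $p^r$ and adds $r$, so $(F^r)^{-1}(K P^\alpha)$ realizes exactly the ``short $= r+1$, long $= r$'' pattern, while ${}_r G\, P^\alpha$ realizes the ``constant $= r$'' pattern. Showing these are the only two families amounts to checking that the jump across the triple bond is at most one, which is the same units-in-$\K$ bookkeeping as before but now with the structure constant $3 \equiv 0$, so only the weaker inequality survives.

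The main obstacle I expect is the careful Chevalley-commutator bookkeeping in the non-simply-laced cases: one must identify, for each pair of roots related by adding a Levi root, exactly which structure constant appears and verify it is a unit in $\K$ unless we are on the $G_2$ triple bond with $p = 3$. This is where citing \cite{Maccan} for the bulk of the work (this theorem is attributed there) is the honest move; the role of this write-up is to explain \emph{why} the classification looks the way it does and to isolate $G_2$, $p=3$ as the only deviation from $P = {}_r G\, P^\alpha$.
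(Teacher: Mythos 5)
First, note that the paper does not prove this theorem: it is imported wholesale from \cite{Maccan} (building on \cite{Wenzel,HL93} for $p\geq 5$), so there is no in-paper argument to compare yours against, and your sketch has to be judged as a reconstruction of the external proof. Its overall shape --- encode $P$ by Wenzel's function $\varphi$, extract inequalities on $\varphi$ from Chevalley commutator relations, and observe that the only structure constants divisible by $p\geq 3$ occur on the $G_2$ triple bond when $p=3$ --- is the right one. But your key propagation step does not go through as written. Two roots $\gamma,\gamma'$ containing $\alpha$ with \emph{different multiplicities} can never be linked by adding or subtracting a Levi root, since their difference again contains $\alpha$; so chains of Levi roots do not connect, for instance, $\alpha_1$ to $2\alpha_1+\alpha_2$ in $G_2$, nor $\alpha$ to any root containing it with coefficient $2$ in types $B_n$, $C_n$, $F_4$. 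To propagate constancy you must also use commutators of two negative root groups both lying over $\alpha$-supported roots (giving $\varphi(\gamma+\gamma')\geq\min(\varphi(\gamma),\varphi(\gamma'))$), combine this with the downward inequality $\varphi(\gamma)\geq\varphi(\gamma+\delta)$ for \emph{arbitrary} positive roots $\delta$ with unit structure constant, and then induct along a chain of partial sums $\alpha=\gamma_1,\ \gamma_1+\beta_2,\dots$ each of which is a root, allowing the added simple root to be $\alpha$ itself at some steps.

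Second, your treatment of the exceptional case contains a concrete error and an internal inconsistency. You assert that for the long simple root $\alpha_2$ of $G_2$ ``the only non-reduced option is already of the form ${}_r G\,P^{\alpha_2}$''; this is false: $K P^{\alpha_2}$ is a parabolic subgroup with reduced part $P^{\alpha_2}$ whose associated function takes the value $1$ on the short roots $\alpha_1+\alpha_2$, $2\alpha_1+\alpha_2$ and $0$ on the long roots containing $\alpha_2$, hence is not constant, so $K P^{\alpha_2}\neq {}_r G\,P^{\alpha_2}$ for every $r$ and the second alternative of the theorem is needed for both simple roots. Relatedly, you first write that the height of $P\cap U_{-\gamma}$ for $\gamma$ \emph{long} can exceed that for $\gamma$ short, and later (correctly) that $(F^r)^{-1}(KP^\alpha)$ realizes ``short $=r+1$, long $=r$''; since $K$ acts as Frobenius on short root groups and trivially on long ones, it is the short roots that carry the extra height. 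Finally, the claim that the discrepancy across the triple bond is at most one and that the only admissible patterns are the constant one and the $K$-pattern is precisely the content of the theorem and is asserted rather than derived; this is where the real work of \cite{Maccan} lies (there organized via the height-one/restricted-Lie-algebra dictionary of Theorem~\ref{th: DG height one} followed by an induction on the height), so deferring to that reference is legitimate, but your write-up should not present this step as routine bookkeeping.
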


The second result completes the classification of parabolic subgroups of $G$ for $p \geq 3$ and is proven in \cite{Maccan2}.

\begin{theorem}
\label{thm classification parabolics 2}
    Let $P$ be a parabolic subgroup of a reductive group $G$, and let $\beta_1,\ldots, \beta_r$ be the simple roots of $G$ such that $P_{\red}$ is the intersection of the parabolic subgroups $P^{\beta_i}$. 
    Then
    \[
    P = \bigcap_{i=1}^r Q^i,
    \] where $Q^i$ is the smallest subgroup of $G$ containing both $P$ and $P^{\beta_i}$. In particular, every parabolic subgroup $P$ can be expressed as the intersection of parabolic subgroups whose reduced part is maximal.
\end{theorem}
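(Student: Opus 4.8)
The plan is to run the argument through Wenzel's dictionary between parabolic subgroups containing $B$ and their associated functions (Definition~\ref{def_varphi}, Theorem~\ref{thm:numericalfunction}). By the standard reduction recalled before Theorem~\ref{thm classification parabolic subgroups} we may assume $G$ simply-connected and simple, and, after conjugating, that $B\subseteq P$; then $P_{\red}=\bigcap_i P^{\beta_i}$ is the standard parabolic $P_I$ with $I=\Delta\setminus\{\beta_1,\dots,\beta_r\}$, and the associated function $\varphi$ of $P$ has $\varphi(\gamma)=\infty$ precisely on the roots $\gamma\in\Phi^+\cap\Phi_I$ (the positive roots of the Levi of $P_{\red}$). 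The starting point is that each $Q^i$ contains $P\supseteq B$, hence is itself a parabolic subgroup containing $B$ — any subgroup scheme containing a Borel is parabolic, since $G/Q^i$ is then a quotient of the complete scheme $G/B$ — so $Q^i$ has an associated function $\varphi_{Q^i}$, and $P\subseteq Q^i$ gives $\varphi\le\varphi_{Q^i}$ pointwise.

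Next I would identify each $Q^i$. The inclusion $P^{\beta_i}\subseteq (Q^i)_{\red}$ is obvious; for the reverse one, set $m_i:=\max\{\varphi(\gamma):\beta_i\in\operatorname{supp}(\gamma)\}$ and use the product decomposition~\eqref{product} to check, one root subgroup at a time, that ${}_{m_i}G\,P^{\beta_i}$ contains both $P$ and $P^{\beta_i}$. Hence $Q^i\subseteq {}_{m_i}G\,P^{\beta_i}$, whose reduced part is $P^{\beta_i}$; so $(Q^i)_{\red}=P^{\beta_i}$, a maximal reduced parabolic, and Theorem~\ref{thm classification parabolic subgroups} applies. It gives $Q^i={}_{r_i}G\,P^{\beta_i}$ for some $r_i\ge0$, except possibly when $G$ is of type $G_2$ with $p=3$ and $P^{\beta_i}$ is the relevant maximal parabolic, in which case $Q^i$ may instead be of the form $(F^{r_i})^{-1}(K\,P^{\beta_i})$; I postpone this case. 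In the generic case, combining $Q^i\subseteq {}_{m_i}G\,P^{\beta_i}$ with $\varphi\le\varphi_{Q^i}$ evaluated at a root attaining $m_i$ forces $r_i=m_i$, so $Q^i={}_{m_i}G\,P^{\beta_i}$, whose associated function sends $\gamma$ to $m_i$ if $\beta_i\in\operatorname{supp}(\gamma)$ and to $\infty$ otherwise.

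It remains to intersect. Since $(\bigcap_i Q^i)\cap U_{-\gamma}=\bigcap_i(Q^i\cap U_{-\gamma})$ and $\boldsymbol{\alpha}_{p^a}\cap\boldsymbol{\alpha}_{p^b}=\boldsymbol{\alpha}_{p^{\min(a,b)}}$, the associated function of $\bigcap_{i=1}^r Q^i$ is $\min_i\varphi_{Q^i}$; I would show this equals $\varphi$, so that Theorem~\ref{thm:numericalfunction} yields $\bigcap_i Q^i=P$ — and since each $(Q^i)_{\red}=P^{\beta_i}$ is maximal, this also proves the final assertion. For $\gamma\in\Phi^+\cap\Phi_I$ no $\beta_i$ lies in $\operatorname{supp}(\gamma)$, so $\varphi_{Q^i}(\gamma)=\infty=\varphi(\gamma)$ for every $i$. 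For $\gamma\notin\Phi_I$ we have $\varphi_{Q^i}(\gamma)\ge\varphi(\gamma)$ for all $i$; to obtain equality for some $i$, one uses that the associated function of a parabolic satisfies $\varphi(\gamma)=\min_{\alpha\in\operatorname{supp}(\gamma)}\varphi(\alpha)$ — here is where $p\ge3$ together with $G\ne G_2$ enters, this formula being a consequence of the Chevalley commutator relations, whose relevant structure constants are then invertible modulo $p$. Choosing $\beta_j\in\operatorname{supp}(\gamma)$ realising this minimum (necessarily $\beta_j\in\{\beta_1,\dots,\beta_r\}$, as $\varphi(\gamma)<\infty$), the same formula gives $m_j=\varphi(\beta_j)$, hence $\varphi_{Q^j}(\gamma)=m_j=\varphi(\beta_j)=\varphi(\gamma)$, as wanted. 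The remaining $G_2$, $p=3$ configuration is dispatched by the identical strategy, using the explicit associated function of $(F^r)^{-1}(K\,P^\alpha)$ obtained from the very special isogeny of Section~\ref{sec: very special isogeny}.

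The main obstacle is exactly the identity $\min_i\varphi_{Q^i}=\varphi$, i.e.\ controlling the associated function of $P$: away from $G_2$ in characteristic $3$ this is the clean formula $\varphi(\gamma)=\min_{\alpha\in\operatorname{supp}(\gamma)}\varphi(\alpha)$, but in the remaining case one must analyse by hand the exotic parabolics produced by the very special isogeny. A more routine secondary point is verifying that $Q^i$ and $\bigcap_iQ^i$ are parabolic, so that the associated-function dictionary applies.
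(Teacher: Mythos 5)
Note first that the paper itself does not prove this statement: it is imported from \cite{Maccan2}, so there is no internal argument to measure yours against. Much of your scaffolding is correct and well executed: each $Q^i$ contains $B$ and is therefore parabolic; the sandwich $P^{\beta_i}\subseteq Q^i\subseteq {}_{m_i}G\,P^{\beta_i}$ with $m_i=\max\{\varphi(\delta):\beta_i\in\Supp(\delta)\}$ does pin down $(Q^i)_{\red}=P^{\beta_i}$, so that Theorem~\ref{thm classification parabolic subgroups} identifies $Q^i={}_{m_i}G\,P^{\beta_i}$ outside the $G_2$, $p=3$ configuration; and the associated function of $\bigcap_iQ^i$ is indeed the pointwise minimum, so Theorem~\ref{thm:numericalfunction} would finish the proof once $\min_i\varphi_{Q^i}=\varphi$ is known.

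The genuine gap is exactly at what you call the main obstacle. Everything is made to rest on the identity $\varphi(\gamma)=\min_{\alpha\in\Supp(\gamma)}\varphi(\alpha)$, which you assert to be ``a consequence of the Chevalley commutator relations.'' This identity is not available anywhere in the paper, and it is not a routine computation: combined with Theorem~\ref{thm classification parabolic subgroups} it is essentially \emph{equivalent} to the statement being proved, so invoking it is close to circular. In particular the inequality $\varphi(\gamma)\le\varphi(\alpha)$ for $\alpha\in\Supp(\gamma)$ --- that a copy of $\boldsymbol{\alpha}_{p^{s}}$ inside $P\cap U_{-\gamma}$ forces one of the same height inside $P\cap U_{-\alpha}$ --- requires a genuine induction through the full Chevalley commutator formula, controlling not only the constants $\mathcal{N}_{\gamma,\delta}$ but all the higher coefficients of the commutator expansion; Section~\ref{sec: Lie algebras sec} of the paper carries out merely the Lie-algebra shadow of this argument and already needs several pages, including a bespoke case analysis for $F_4$ (Lemmas~\ref{prop normalizersreduced F4}--\ref{F4: gamma + 2alpha no root}). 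Worse, the min formula is \emph{false} for $G_2$ in characteristic $3$: the unnumbered remark following Lemma~\ref{prop normalizersreduced F4} exhibits $KB$ and $KP^{\alpha_1}$ as counterexamples to both implications. So the case you propose to ``dispatch by the identical strategy'' cannot be handled by that strategy at all; it requires a separate enumeration of the exotic parabolics produced by the very special isogeny, which is a substantial part of the content of \cite{Maccan2}. Finally, the statement as printed carries no hypothesis on $p$, whereas your argument uses Theorem~\ref{thm classification parabolic subgroups} (which assumes $p\ge 3$) and a min formula that also fails for $p=2$ in the multiply-laced types; the missing content in all of these cases is precisely what the cited reference supplies.
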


\subsection{Character lattice} \label{sec character lattice of parab subgroups}
We now describe explicitly the character lattice {of an arbitrary parabolic subgroup of a simply-connected simple group. The reductive case follows by taking direct sums.}
Such a description will be needed later, to show that any horospherical subgroup is strongly horospherical when $p \geq 3$. 
In characteristic at least $5$, the following result, due to \cite[Section~2]{HL93}, holds.

\begin{lemma}
	\label{characters geq5}
	Let $p \geq 5$. Then the character lattice of the parabolic subgroup
	\[
	P = \bigcap_\alpha ({}_{r_\alpha} G P^\alpha)
	\]
	is given by
	\[
	X^*(P) = \bigoplus_\alpha \mathbf{Z} p^{r_\alpha} \varpi_\alpha,
	\]
	where $\varpi_\alpha$ denotes the fundamental weight associated with the simple root $\alpha$.
\end{lemma}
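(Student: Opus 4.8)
The plan is to prove the inclusion $\bigoplus_\alpha\mathbf{Z}p^{r_\alpha}\varpi_\alpha\subseteq X^*(P)$ by an explicit Frobenius pullback and the reverse inclusion by restricting characters to the copies of $\SL_2$ attached to the simple roots. Throughout, let $S\subseteq\Delta$ be the set of simple roots $\alpha$ with $r_\alpha<\infty$ (equivalently, $U_{-\alpha}\not\subset P$), so that $P=\bigcap_{\alpha\in S}{}_{r_\alpha}G P^\alpha$ and $P_{\mathrm{red}}=\bigcap_{\alpha\in S}P^\alpha=P_{\Delta\setminus S}$; the simple roots outside $S$ contribute nothing to either side and may be ignored. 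The first step is to reduce $X^*(P)$ to a sublattice of $X^*(T)$: by the scheme isomorphism $P=U_P^-\times P_{\mathrm{red}}$ of \eqref{product} and the fact that the infinitesimal unipotent group $U_P^-$ admits no nontrivial character, restriction along $P_{\mathrm{red}}\hookrightarrow P$ gives an injection $X^*(P)\hookrightarrow X^*(P_{\mathrm{red}})$. Since $G$ is simply connected, $X^*(P_{\mathrm{red}})=\bigoplus_{\alpha\in S}\mathbf{Z}\varpi_\alpha$ (the character lattice of a standard reduced parabolic; see e.g.\ \cite{Jan}). Hence every $\chi\in X^*(P)$ restricts on $T$ to $\sum_{\alpha\in S}c_\alpha\varpi_\alpha$ with $c_\alpha=\langle\chi,\alpha^\vee\rangle\in\mathbf{Z}$, and it remains to identify which tuples $(c_\alpha)$ occur.

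For the inclusion $\supseteq$, fix $\alpha\in S$. Then $P\subseteq{}_{r_\alpha}G P^\alpha=(F^{r_\alpha})^{-1}\!\left((P^\alpha)^{(r_\alpha)}\right)$, and $\varpi_\alpha\in X^*(P^\alpha)$ because $G$ is simply connected. Pulling back the character $\varpi_\alpha^{(r_\alpha)}$ of $(P^\alpha)^{(r_\alpha)}$ along $F^{r_\alpha}$ and restricting to $P$ produces a character of $P$ whose restriction to $T$ equals $p^{r_\alpha}\varpi_\alpha$ (the Frobenius multiplies the weights of $T$ by $p^{r_\alpha}$). This gives $\bigoplus_\alpha\mathbf{Z}p^{r_\alpha}\varpi_\alpha\subseteq X^*(P)$.

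For the inclusion $\subseteq$ one must show $p^{r_\alpha}\mid c_\alpha$ for each $\alpha\in S$; fix such an $\alpha$ and set $r=r_\alpha$. The crucial input, and the only place where the hypothesis $p\neq 2$ genuinely enters, is the vanishing $X^*({}_r\SL_2)=0$: for $r=1$ this follows from \Cref{th: DG height one}, since a character of ${}_1\SL_2$ factors through $\boldsymbol{\mu}_p$ and so corresponds to a morphism of restricted Lie algebras $\mathfrak{sl}_2\to\Lie(\Gm)$, which vanishes because $\mathfrak{sl}_2$ is perfect (using $p\neq 2$) while $\Lie(\Gm)$ is abelian; the general case follows by induction on $r$ via the exact sequence $1\to{}_1\SL_2\to{}_r\SL_2\to({}_{r-1}\SL_2)^{(1)}\to 1$. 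Now let $\phi_\alpha\colon\SL_2\to G$ be the homomorphism attached to $\alpha$, with $\phi_\alpha(\diag(t,t^{-1}))=\alpha^\vee(t)$ and image $G_\alpha=\langle U_\alpha,U_{-\alpha}\rangle$. One checks that ${}_rU_\alpha\subseteq U_\alpha\subset P$, that ${}_rT_\alpha=\alpha^\vee(\boldsymbol{\mu}_{p^r})\subset T\subset P$, and that ${}_rU_{-\alpha}\subset P$ (indeed ${}_rU_{-\alpha}\subset{}_rG\subset{}_rG P^\alpha$, while ${}_rU_{-\alpha}\subset U_{-\alpha}\subset P^\beta$ for every $\beta\in S$ with $\beta\neq\alpha$); combined with the big-cell factorization ${}_rG_\alpha={}_rU_{-\alpha}\cdot{}_rT_\alpha\cdot{}_rU_\alpha$ this yields $\phi_\alpha({}_r\SL_2)\subseteq{}_rG_\alpha\subseteq P$. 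Hence $\chi\circ\phi_\alpha$ restricts to a character of ${}_r\SL_2$, which is trivial; in particular it is trivial on ${}_rT_{\SL_2}=\boldsymbol{\mu}_{p^r}$. But $(\chi\circ\phi_\alpha)|_{T_{\SL_2}}$ is the character $t\mapsto t^{c_\alpha}$, whose triviality on $\boldsymbol{\mu}_{p^r}$ forces $p^r\mid c_\alpha$. Together with the reduction step this gives $X^*(P)\subseteq\bigoplus_\alpha\mathbf{Z}p^{r_\alpha}\varpi_\alpha$, and the lemma follows.

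The step I expect to require the most care is the vanishing $X^*({}_r\SL_2)=0$, which rests on perfectness of $\mathfrak{sl}_2$ (hence on $p\neq 2$) and on the height-one dictionary of \Cref{th: DG height one}; the rest is routine bookkeeping with Frobenius kernels, the big-cell decomposition, and the character lattices of reduced parabolic subgroups. In fact the argument above works verbatim for all $p\geq 3$.
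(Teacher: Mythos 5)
Your proof is correct. Note, however, that the paper does not actually prove this lemma: it is quoted from \cite{HL93}, and the closest in-house argument is the proof of the $p=3$ analogue (Lemma~\ref{characters char 3}), which proceeds by identifying $X^*(P)$ with $\Pic(G/P)$ and invoking the description of the Picard group by Schubert divisors from \cite{Maccan}, supplemented by a restriction-to-$\SL_2$ computation only for the exceptional factors $KP^\nu$. Your route is genuinely different and more self-contained: the reduction $X^*(P)\hookrightarrow X^*(P_{\mathrm{red}})$ via the scheme decomposition $P=U_P^-\times P_{\mathrm{red}}$ of \eqref{product}, the Frobenius pullback giving the inclusion $\supseteq$, and the divisibility $p^{r_\alpha}\mid\langle\chi,\alpha^\vee\rangle$ obtained from $\phi_\alpha({}_{r_\alpha}\SL_2)\subseteq P$ together with the vanishing $X^*({}_r\SL_2)=0$ are all sound; the last vanishing correctly reduces, via Theorem~\ref{th: DG height one} and the exact sequence $1\to{}_1\SL_2\to{}_r\SL_2\to({}_{r-1}\SL_2)^{(1)}\to1$, to the perfectness of $\mathfrak{sl}_2$ for $p\geq 3$. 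What your approach buys is an elementary, citation-free proof valid uniformly for $p\geq 3$ for parabolics of the stated form $\bigcap_\alpha({}_{r_\alpha}GP^\alpha)$, essentially by the same mechanism ($T$-restriction plus an $\SL_2$-computation) that the paper deploys only in the exceptional $G_2$ situation; what the paper's Picard-group route buys is that it also covers the parabolics ${}^{r_\nu-1}KP^\nu$ arising from the very special isogeny, which your argument does not address (and does not need to for this statement). One small point of care: your claim that $X^*({}_1\SL_2)=0$ "rests on $p\neq 2$" is where the hypothesis enters your argument, but the statement itself is consistent with the paper's warnings that $p=2$ is genuinely pathological elsewhere, so flagging it as the delicate step is appropriate.
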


For $p=3$, the only additional input in the classification of parabolic subgroups 
comes from the very special isogeny, which occurs when the group {is of type} $G_2$. 
In that case, we denote by
\[
{}^rK := (F^r)^{-1}K
\]
the preimage of $K$ under the $r$-th iterated Frobenius. 
By \Cref{thm classification parabolic subgroups} and \Cref{thm classification parabolics 2}, 
an arbitrary parabolic subgroup is then of the form 
\begin{align}
	\label{parabolic char 3}
	P = \left( \bigcap_{{\alpha\in I}} {}_{r_\alpha} G P^\alpha \right) \cap \left( \bigcap_{{\nu \in J}} {}^{r_\nu-1}K P^\nu \right),
\end{align}
for some integers $r_\alpha \geq 0$ and $r_\nu \geq 1$ {and some distinct subsets of simple roots $I$ and $J$.}
This description is unique under the assumption that each factor in the above intersection is minimal with respect to inclusion.

\begin{lemma}
	\label{characters char 3}
	Let $p=3$, and let $P$ be a parabolic subgroup as in \eqref{parabolic char 3}. 
	Then its character group is given by
	\[
	X^*(P) = \bigoplus_{{\beta \in I \cup J}} \mathbf{Z} p^{r_\beta}\varpi_\beta.
	\] 
\end{lemma}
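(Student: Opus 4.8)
\textbf{Proof plan for Lemma \ref{characters char 3}.}

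The strategy is to reduce the $p=3$ case to the already-established $p\geq 5$ result (Lemma \ref{characters geq5}) by carefully accounting for the only new phenomenon, namely the factors of type ${}^{r_\nu-1}K P^\nu$ coming from the very special isogeny of a $G_2$-factor. First I would recall that for a simply-connected simple group $G$ not of type $G_2$ (or when $p=3$ but the factor is of type $B_n$, $C_n$, $F_4$), Theorem \ref{thm classification parabolic subgroups} gives exactly the same shape of parabolic subgroups as in characteristic $\geq 5$, so the computation of $X^*(P)$ proceeds verbatim as in the proof of Lemma \ref{characters geq5}: one writes $P = \bigcap_\alpha ({}_{r_\alpha}G\, P^\alpha)$ and checks that a character of $G$ (pulled back from the maximal torus) descends to $P$ precisely when it kills $U_P^-$ and the torus part, which forces the coefficient of $\varpi_\alpha$ to be divisible by $p^{r_\alpha}$; conversely $p^{r_\alpha}\varpi_\alpha$ does extend because $P^\alpha$ kills $\varpi_\alpha$ after the Frobenius twist. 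Hence the only thing that genuinely requires work is the contribution of a $G_2$-factor with a parabolic of the form ${}^{r_\nu-1}K P^\nu$.

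For the $G_2$ part, I would isolate a simple factor $G_0$ of type $G_2$ with $p=3$, and analyze the two maximal parabolics $P^{\nu_s}$ (attached to the short simple root) and $P^{\nu_\ell}$ (attached to the long simple root) separately, since the very special isogeny $\pi_{G_0}\colon G_0\to\overline{G_0}$ acts as Frobenius on short root groups and as the identity on long ones. The key computation is: what is $X^*\!\left({}^{r-1}K\, P^\nu\right)$ for $\nu$ a simple root of $G_2$? I expect that for the parabolic whose reduced part is $P^{\nu_s}$ the extra $K$ does not change the lattice generated relative to the ``expected'' answer $\mathbf{Z}\,p^{r}\varpi_{\nu_s}$ — indeed the statement of the lemma claims the uniform formula $\bigoplus_\beta \mathbf{Z} p^{r_\beta}\varpi_\beta$, so one must verify that the presence of $K$ in ${}^{r_\nu-1}K P^\nu$ contributes exactly a factor $p^{r_\nu}$ (not $p^{r_\nu-1}$) on the relevant fundamental weight. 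The way to see this is to use $\pi_{\overline{G_0}}\circ\pi_{G_0}=F$: a character extending over $K$ must be a character of $\overline{G_0}$ pulled back via $\pi_{G_0}$, and then extending further over the reduced parabolic $P^\nu$ and over the Frobenius-twist datum $F^{r-1}$ accumulates the remaining powers of $p$, yielding $p\cdot p^{r-1}=p^{r}$ as required. Concretely I would pick explicit root coordinates for $G_2$, write $K = K_{G_0}$ in terms of the short root groups, and check on the functor of points that a weight $a\varpi_{\nu_s}+b\varpi_{\nu_\ell}$ is trivial on ${}^{r-1}K P^\nu$ iff $p^{r}\mid$ the appropriate coefficient.

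Finally I would assemble the global statement: since any parabolic $P$ as in \eqref{parabolic char 3} is an intersection of subgroups, each defined in a single simple factor, and $X^*(P)$ is the sum over factors (using that $X^*$ of an intersection of parabolics containing $B$ is the intersection of their character lattices inside $X^*(T)$, which for these explicitly diagonalized sublattices $\mathbf{Z} p^{r_\alpha}\varpi_\alpha$ and $\mathbf{Z} p^{r_\nu}\varpi_\nu$ is again a direct sum of the individual $\mathbf{Z} p^{r_\beta}\varpi_\beta$), one obtains $X^*(P)=\bigoplus_\beta \mathbf{Z} p^{r_\beta}\varpi_\beta$. The main obstacle I anticipate is the $G_2$-with-$K$ computation: one has to be careful about whether the very special isogeny contributes a power $p$ or $p^{0}$ to each of the two fundamental weights, and to reconcile the normalization convention ``$r_\nu\geq 1$'' and ``${}^{r_\nu-1}K$'' with the claimed exponent $p^{r_\nu}$; getting this bookkeeping exactly right — rather than off by one — is the delicate point, and I would double-check it against the $r_\nu=1$ base case where ${}^{0}K P^\nu = K P^\nu$ and the answer should be $\mathbf{Z}\,p\,\varpi_\nu$ on the relevant weight.
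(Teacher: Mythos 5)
Your overall architecture agrees with the paper's: everything reduces to computing $X^*({}^{r_\nu-1}K P^\nu)$ for a single simple root $\nu$ of a $G_2$-factor, the remaining factors being handled exactly as in \Cref{characters geq5}. But in the two places that carry actual content you stop short. First, the decomposition of $X^*(P)$ over the factors $Q^i$ of \Cref{thm classification parabolics 2} is not the formal statement you invoke: for $P=\bigcap_i Q^i$ the easy inclusion is $\sum_i X^*(Q^i)\subseteq X^*(P)$ (restriction of characters), the nontrivial direction is the reverse one, and it is certainly not an ``intersection of character lattices inside $X^*(T)$'' (that intersection would be far too small). The paper obtains the equality by identifying $X^*(P)$ with $\Pic(G/P)$ and citing the Schubert-divisor description of the latter from \cite{Maccan}. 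Second, and decisively, your treatment of $X^*(KP^\nu)$ is a plan rather than a proof: you explicitly defer the question of whether $K$ contributes a factor $p$ or $p^0$ to $\varpi_\nu$, and that question is the entire mathematical content of the lemma. The paper answers it by sandwiching $\Z p\varpi_\nu\subseteq X^*(KP^\nu)\subseteq\Z\varpi_\nu$ and excluding $\varpi_\nu$ by restriction to the copy of $\SL_2$ attached to a short root $\gamma$ containing $\nu$ in its support, where $KP^\nu$ cuts out ${}_1G_\gamma(B\cap G_\gamma)$.

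If you actually carry out your isogeny computation, you will find that the bookkeeping you flagged as ``delicate'' is where the whole matter is decided, and that the answer is not uniform in $\nu$. Every character of $KP^\nu$ is trivial on $K$ (its differential kills $\Lie(K)$, which is generated by short root vectors and the brackets $[X_\gamma,X_{-\gamma}]$, and $K$ has height one), so $X^*(KP^\nu)=\pi_G^*\,X^*(\overline{P})=\Z\,\pi_G^*\bar\varpi_{\bar\nu}$, where $\overline{P}=\pi_G(P^\nu)$. Since $\pi_G^*\bar\alpha=p\alpha$ for $\alpha$ short and $\pi_G^*\bar\alpha=\alpha$ for $\alpha$ long, and since in $\overline{G}\simeq G_2$ the roles of long and short are exchanged, one computes $\pi_G^*\bar\varpi_{\bar 1}=\pi_G^*(2\bar\alpha_1+3\bar\alpha_2)=6\alpha_1+3\alpha_2=p\varpi_{1}$ but $\pi_G^*\bar\varpi_{\bar 2}=\pi_G^*(\bar\alpha_1+2\bar\alpha_2)=3\alpha_1+2\alpha_2=\varpi_{2}$. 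So the claimed factor of $p$ appears for $\nu=\alpha_1$ (short) but not for $\nu=\alpha_2$ (long), where $\varpi_{\alpha_2}$ is itself defined on $KP^{\alpha_2}$ as the pullback of $\bar\varpi_{\bar 2}$. This is consistent with the failure, for $\nu$ long, of the step ``$\langle\varpi_\nu,\gamma^\vee\rangle=1$ because $\nu$ has multiplicity one in $\gamma$'' in the paper's own argument: for $\gamma=\alpha_1+\alpha_2$ one has $\gamma^\vee=\alpha_1^\vee+3\alpha_2^\vee$, so $\langle\varpi_{\alpha_2},\gamma^\vee\rangle=3$ and the restriction of $\varpi_{\alpha_2}$ to $G_\gamma$ is $p\omega$, which \emph{is} defined on ${}_1G_\gamma(B\cap G_\gamma)$ --- no contradiction arises. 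So your proposal, as written, neither completes the computation nor detects this short/long asymmetry; completing it honestly forces either a case distinction in the formula or an argument that only the short simple root ever occurs in a factor of the form ${}^{r_\nu-1}KP^\nu$ in \eqref{parabolic char 3}. Until that is resolved, the proposal cannot be accepted as a proof.
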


\begin{proof}
	Recall that the character group of $P$ is naturally identified with the Picard group of the corresponding homogeneous space $G/P$; see for example \cite[Remark 1.4.2]{BrionLectures}.
	From the description of the Picard group in term of Schubert divisors (given in \cite[Theorem~3.12]{Maccan}) together with the notations of \Cref{thm classification parabolics 2}, the character group of $P$ is the direct sum of the character groups of the corresponding $Q^i$. 
	More precisely, when $P$ is of the form \eqref{parabolic char 3}, we have
	\begin{align*}
		X^\ast(P)  
		= \Pic(G/P) 
		&= \left( \bigoplus_\alpha \Pic(G/({}_{r_\alpha}GP^\alpha)) \right) 
		  \oplus \left( \bigoplus_\nu \Pic(G/({}^{r_\nu-1}KP^\nu))\right) \\
		&= \left( \bigoplus_\alpha \Z p^{r_\alpha} \varpi_\alpha \right) 
		  \oplus \left( \bigoplus_\nu X^\ast ({}^{r_\nu-1}KP^\nu)\right).
	\end{align*}
	Since pulling back by a Frobenius multiplies the generator of the character group by $p$, it only remains to compute the character group of $KP^\nu$ for a simple root $\nu$. 
	We claim that
	\[
	X^*(KP^\nu) = \Z p \varpi_\nu,
	\]
	from which it follows that the character group of ${}^{r_\nu -1}KP^\nu$ is freely generated by $p^{r_\nu}\varpi_\nu$, for any $r_\nu \geq 1$.

	Indeed, since $P^\nu \subset KP^\nu \subset {}_1G P^\nu$, we obtain the inclusions
	\[
	\Z \varpi_\nu = X^\ast (P^\nu) 
	\;\subset\; X^\ast(KP^\nu) 
	\;\subset\; X^\ast ({}_1 G P^\nu) = \Z p\varpi_\nu.
	\]
	Because there are no intermediate sublattices, one of the two inclusions must be strict. 
	Suppose, for contradiction, that $\varpi_\nu$ is defined on $KP^\nu$. 
	Then it is also defined on the restriction to $KP^\nu \cap G_\gamma$, where $\gamma$ is a short root containing $\nu$ in its support with multiplicity one (for instance, the sum of all simple roots). 
	This intersection is
	\[
	P^\prime := KP^\nu \cap G_\gamma 
	= {}_1G_\gamma (B \cap G_\gamma) 
	\simeq \left\{\begin{bmatrix}
		a & b \\ c & d
	\end{bmatrix} \colon c^p=0 \right\} \subset G_\gamma \simeq \SL_2.
	\]
	Since the multiplicity of $\nu$ in $\gamma$ is $1$, we have $\langle \varpi_\nu, \gamma^\vee \rangle = 1$. 
	Thus the restriction of $\varpi_\nu$ to $T \cap G_\gamma$ coincides with the fundamental weight $\omega$ of $G_\gamma$. 
	However, $X^*(P^\prime)$ is generated by $p\omega$, and $\omega$ itself is not defined on this parabolic. 
	Hence the restriction of $\varpi_\nu$ cannot be defined either, a contradiction. 
	We conclude that $X^*(KP^\nu)$ is freely generated by $p\varpi_\nu$.
\end{proof}

\section{Most Lie subalgebras containing \texorpdfstring{$\Lie(U)$}{Lie(U)} are \texorpdfstring{$T$}{T}-stable for \texorpdfstring{$p \geq 3$}{p>2}} \label{sec: Lie algebras sec}

Let $G$ be a reductive group, which we may assume, without loss of generality, to be of the form $G = C \times G'$, where $C$ is a torus and $G' = \prod_{j \in J} G_{(j)}$ is a simply connected semisimple group.
Assume that $p \geq 3$. In this section we prove that almost all Lie subalgebras of $\Lie(G)$ containing $\Lie(U)$ are $T$-stable (see Propositions~\ref{prop: hhh containing LieU} and~\ref{prop: LieH G2}	 for precise statements). This will be useful in the proof of Proposition~\ref{prop:LieH-T-stable}.

\smallskip

Throughout this section, we denote by
\[
\{X_\gamma^i, V_\alpha^i \colon \gamma \in \Phi_i, \, \alpha \in \Delta_i \}
\]
a Chevalley basis of $\Lie(G_{(i)})$, where $G_{(i)}$ is a simple factor of $G$. 
If $G$ is simple, we use the same notation, omitting the index $i$ throughout.
 
We will need the following bracket relations between elements of such a basis (see \cite[\S~25.2]{Humphreys} for details):
\begin{enumerate}[(i)]
    \item $[V_\alpha, V_\beta] = 0$ for all $\alpha, \beta \in \Delta$;
    \item $[X_{-\alpha}, X_\alpha] = V_\alpha$ for all $\alpha \in \Delta$;
    \item $[V_\alpha, X_\gamma] = \gamma(V_\alpha) X_\gamma$;
    \item $[X_\gamma, X_\delta] = 0$ if $\gamma + \delta$ is not a root;
    \item\label{item: bracket v} $[X_\gamma, X_\delta] = \mathcal{N}_{\gamma,\delta} X_{\gamma+\delta}$ if $\gamma + \delta$ is a root, in which case $\gamma - (\mathcal{N}_{\gamma,\delta} - 1)\delta$ is also a root, while $\gamma - \mathcal{N}_{\gamma,\delta} \delta$ is not.  
\end{enumerate}
 Note that the structure constants $\mathcal{N}_{\gamma,\delta} \in \{\pm 1, \pm 2, \pm 3\}$ satisfy $\mathcal{N}_{\gamma,\delta} = -\mathcal{N}_{\delta,\gamma}$. Moreover, in the simply-laced case, we have $\mathcal{N}_{\gamma,\delta} \in \{\pm 1\}$. Note also that, by restricting to the $\mathrm{SL}_2$ copy generated by a fixed root $\gamma$, the above relations imply:
\begin{enumerate}[(vi)]
    \item $[[X_{-\gamma}, X_\gamma], X_\gamma] = 2X_\gamma$.
\end{enumerate}

\begin{remark}
	\label{rem: BnCn and p=3}
Let $G$ be a simple group over a field of characteristic $p \geq 3$, not of type $F_4$ or $G_2$. 
Fix a Lie subalgebra $\mathfrak{h} \subset \Lie(G)$. Then, for any positive root $\gamma \in \Phi^+$, if $X_{-\alpha} \in \mathfrak{h}$ for every $\alpha \in \operatorname{Supp}(\gamma)$, it follows that $X_{-\gamma} \in \mathfrak{h}$ as well. 
This is due to the fact that the relevant structure constants $\mathcal{N}_{\gamma,\delta}$ lie in $\{\pm 1, \pm 2\}$, and hence are nonzero in characteristic $p \geq 3$. 
For groups of type $F_4$ or $G_2$ over a field of characteristic $p \geq 5$, the same conclusion holds. {
Note, however, that these root systems have structure constants equal to $\pm 3$, which explains the restriction on the characteristic.}
\end{remark}

\begin{remark}
We follow the root system conventions of \cite{Bourbaki}.
\begin{itemize}
    \item For $F_4$: a basis is given by $\{\alpha_1, \alpha_2, \alpha_3, \alpha_4\}$, 
    where $\alpha_1$ and $\alpha_2$ are long roots, and $\alpha_3$, $\alpha_4$ are short roots.  
  \begin{center}
\dynkin[edge length=.95cm,
labels*={\alpha_1,\alpha_2,\alpha_3,\alpha_4}]F4
\end{center}
    \item For $G_2$: a basis is given by $\{\alpha_1, \alpha_2\}$, where $\alpha_1$ is the short root and $\alpha_2$ is the long root.
    \begin{center}
\dynkin[edge length=.95cm,
labels*={\alpha_1,\alpha_2}]G2
\end{center}	
\end{itemize}
\end{remark}

\begin{lemma}\label{prop normalizersreduced F4}
Let $G$ be a simple group of type $F_4$ over a field of characteristic $p \geq 3$. 
Let $\mathfrak{h}$ be a Lie subalgebra of $\Lie(G)$ containing $\Lie(U)$. 
Then, for any positive root $\gamma$, if $X_{-\alpha} \in \mathfrak{h}$ for every $\alpha \in \Supp(\gamma)$, it follows that $X_{-\gamma} \in \mathfrak{h}$.
\end{lemma}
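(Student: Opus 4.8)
The plan is to argue by induction on the height of the positive root $\gamma$, reducing each step to a single bracket computation inside the Chevalley basis. If $\gamma$ is simple, there is nothing to prove, so assume $\gamma$ has height $\geq 2$. The key observation is that it suffices to exhibit \emph{one} way of writing $\gamma = \delta + \alpha$ with $\alpha \in \Delta$, $\delta \in \Phi^+$, $\operatorname{Supp}(\delta) \subset \operatorname{Supp}(\gamma)$, and $\mathcal{N}_{-\delta,-\alpha} \neq 0$ in $\K$: indeed, by the inductive hypothesis $X_{-\delta} \in \mathfrak{h}$ (since $\operatorname{Supp}(\delta) \subset \operatorname{Supp}(\gamma)$ forces $X_{-\beta} \in \mathfrak{h}$ for all $\beta \in \operatorname{Supp}(\delta)$), and $X_{-\alpha} \in \mathfrak{h}$ by hypothesis, so relation (v) gives $[X_{-\delta}, X_{-\alpha}] = \mathcal{N}_{-\delta,-\alpha} X_{-\gamma} \in \mathfrak{h}$, whence $X_{-\gamma} \in \mathfrak{h}$.

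By Remark~\ref{rem: BnCn and p=3}, when $p \geq 5$ every structure constant $\mathcal{N}_{\mu,\nu} \in \{\pm 1, \pm 2, \pm 3\}$ is nonzero in $\K$, so any decomposition $\gamma = \delta + \alpha$ as above works and the lemma is immediate. The only real content is therefore the case $p = 3$, where a structure constant $\mathcal{N}_{-\delta,-\alpha} = \pm 3$ vanishes in $\K$. Here I would go through the (finitely many) positive roots $\gamma$ of $F_4$ of height $\geq 2$ and, for each, produce a decomposition $\gamma = \delta + \alpha$ with $\mathcal{N}_{\gamma-\alpha,\alpha} \in \{\pm 1, \pm 2\}$. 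By relation (v), $\mathcal{N}_{\delta,\alpha} = \pm 3$ only when $\delta - \alpha, \delta - 2\alpha$ are roots but $\delta - 3\alpha$ is not, i.e.\ when $\alpha$ and $\delta$ span a copy of the $G_2$ root system with $\delta$ of the appropriate length; in $F_4$ this forces $\alpha$ to be one of the two short simple roots $\alpha_3, \alpha_4$ and constrains $\delta$ heavily. For each such $\gamma$ one checks directly from the list of positive roots of $F_4$ (in Bourbaki's conventions) that there is an alternative simple root $\alpha' \in \operatorname{Supp}(\gamma)$ with $\gamma - \alpha' \in \Phi^+$ and $\mathcal{N}_{\gamma-\alpha',\alpha'} \neq \pm 3$; in other words, the "bad" decomposition is never the only one. (A clean way to package this: if $\gamma$ has at least two simple roots in its support, or if $\gamma - \alpha$ and $\gamma - 2\alpha$ are not both roots for the relevant $\alpha$, we are done; what remains is a short explicit check for the few long roots $\gamma$ of $F_4$ whose $\alpha$-string through the relevant short simple root has length $3$.)

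The main obstacle is thus purely the bookkeeping in the $p=3$ case: one must be certain that for \emph{every} positive root $\gamma$ of $F_4$ of height $\geq 2$ there genuinely exists a "good" decomposition, and that no root gets stranded because all of its admissible decompositions carry a structure constant divisible by $3$. I expect this to come down to the following two facts about $F_4$: first, the structure constants $\mathcal{N}_{\mu,\nu}$ take the value $\pm 3$ only for pairs generating a $G_2$-subsystem, which in $F_4$ involves the short simple roots; and second, inspecting the $48$ roots of $F_4$, every root $\gamma$ of height $\geq 2$ admits a simple root $\alpha$ in its support such that the $\alpha$-root-string through $\gamma - \alpha$ does not have length $3$, i.e.\ $\gamma - 2\alpha \notin \Phi$ or $\gamma - \alpha \notin \Phi$. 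Once these are verified, the induction closes and the lemma follows; I would present the verification as a short table rather than grinding through all cases in prose.
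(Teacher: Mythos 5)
Your proposal is correct and follows essentially the same route as the paper: induction that reduces each non-simple root to a single bracket $[X_{-\delta},X_{-\alpha}]=\mathcal{N}_{-\delta,-\alpha}X_{-\gamma}$ (existence of the decomposition being Lemma~\ref{lem gamma minus alpha}), with the only content for $p=3$ being a root-by-root check that some such structure constant is prime to $3$ --- the ``bookkeeping table'' you defer is precisely the paper's proof, which handles roots of proper support by restricting to $B_3$ or $C_3$ subsystems and then lists the ten full-support roots with an explicit chain of nonvanishing constants. One remark: your own observation that $|\mathcal{N}_{\mu,\nu}|=3$ forces a root string of length four (a $G_2$-configuration) actually closes the argument with no table at all, since $F_4$ has no root strings of length four and hence all its structure constants lie in $\{\pm 1,\pm 2\}$, exactly as in Remark~\ref{rem: BnCn and p=3}.
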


\begin{proof}
If $\gamma$ is simple, there is nothing to prove. If $\Supp(\gamma) \subsetneq \Delta$, then we may restrict to a subsystem of type $B_3$ (generated by $\alpha_1, \alpha_2, \alpha_3$) or $C_3$ (generated by $\alpha_2, \alpha_3, \alpha_4$), where no structure constant is equal to $\pm 3$, so the claim follows from Remark~\ref{rem: BnCn and p=3} in these cases.

Thus, it remains to consider the case where $\Supp(\gamma) = \Delta$. We will show that if $X_{-\alpha_i} \in \mathfrak{h}$ for all $i = 1, \dots, 4$, then all negative root vectors belong to $\mathfrak{h}$. From the previous step (restriction to a subsytem of type $B_3$), we already know that $X_{-\alpha_1 - \alpha_2 - \alpha_3} \in \mathfrak{h}$, which serves as a starting point. There are exactly 10 positive roots to check, and we proceed inductively via brackets:

\begin{align*}
\gamma_1 &\coloneqq \alpha_1 + \alpha_2 + \alpha_3 + \alpha_4 & \Rightarrow && X_{-\gamma_1} \in \mathfrak{h} &  \quad\text{since} \quad \mathcal{N}_{\alpha_1 + \alpha_2 + \alpha_3,\, \alpha_4} = \pm 1, \\
\gamma_2 &\coloneqq \alpha_1 + \alpha_2 + 2\alpha_3 + \alpha_4 & \Rightarrow && X_{-\gamma_2} \in \mathfrak{h} &  \quad\text{since} \quad \mathcal{N}_{\gamma_1,\, \alpha_3} = \pm 1, \\
\gamma_3 &\coloneqq \alpha_1 + 2\alpha_2 + 2\alpha_3 + \alpha_4 & \Rightarrow && X_{-\gamma_3} \in \mathfrak{h} &  \quad\text{since}  \quad \mathcal{N}_{\gamma_2,\, \alpha_2} = \pm 1, \\
\gamma_4 &\coloneqq \alpha_1 + \alpha_2 + 2\alpha_3 + 2\alpha_4 & \Rightarrow && X_{-\gamma_4} \in \mathfrak{h} &  \quad\text{since}  \quad \mathcal{N}_{\gamma_2,\, \alpha_4} = \pm 2, \\
\gamma_5 &\coloneqq \alpha_1 + 2\alpha_2 + 3\alpha_3 + \alpha_4 & \Rightarrow && X_{-\gamma_5} \in \mathfrak{h} &  \quad\text{since}  \quad  \mathcal{N}_{\gamma_3,\, \alpha_3} = \pm 1, \\
\gamma_6 &\coloneqq \alpha_1 + 2\alpha_2 + 2\alpha_3 + 2\alpha_4 & \Rightarrow && X_{-\gamma_6} \in \mathfrak{h} &  \quad\text{since}  \quad \mathcal{N}_{\gamma_4,\, \alpha_2} = \pm 1, \\
\gamma_7 &\coloneqq \alpha_1 + 2\alpha_2 + 3\alpha_3 + 2\alpha_4 & \Rightarrow && X_{-\gamma_7} \in \mathfrak{h} &  \quad\text{since}  \quad \mathcal{N}_{\gamma_6,\, \alpha_3} = \pm 1, \\
\gamma_8 &\coloneqq \alpha_1 + 2\alpha_2 + 4\alpha_3 + 2\alpha_4 & \Rightarrow && X_{-\gamma_8} \in \mathfrak{h} &  \quad\text{since}   \quad \mathcal{N}_{\gamma_7,\, \alpha_3} = \pm 2, \\
\gamma_9 &\coloneqq \alpha_1 + 3\alpha_2 + 4\alpha_3 + 2\alpha_4 & \Rightarrow && X_{-\gamma_9} \in \mathfrak{h} &  \quad\text{since}  \quad \mathcal{N}_{\gamma_8,\, \alpha_2} = \pm 1, \\
\gamma_{10} &\coloneqq 2\alpha_1 + 3\alpha_2 + 4\alpha_3 + 2\alpha_4 & \Rightarrow && X_{-\gamma_{10}} \in \mathfrak{h} & \quad \text{since}  \quad \mathcal{N}_{\gamma_9,\, \alpha_1} = \pm 1.
\end{align*}
\end{proof}

\begin{remark}
	Assume that $p=3$ and that $G$ is a simple group of type $G_2$.
	Let $\mathfrak{h}$ be a Lie subalgebra of $\Lie(G)$ containing $\Lie(U)$. 
	Let $\gamma$ be a positive root of $G_2$. 
	Then it is not true that $X_{-\gamma}$ belongs to $\hhh$ if and only if $X_{-\alpha} \in \hhh$ for all $\alpha \in \Supp(\gamma)$. Actually, both implications fail.
	\begin{itemize}
		\item Recall that $K$ denotes the kernel of the very special isogeny (see Section~\ref{sec: very special isogeny}). 
		Consider the parabolic subgroup $H = K B$. Then, by \cite[Proposition 2.20]{Maccan}, we have
		\[
			\Lie(H) = \Lie(K) + \Lie(B) = \Lie(T) \oplus \Lie(U) \oplus 
			\K X_{-\alpha_1} \oplus 
			\K X_{-\alpha_1-\alpha_2} \oplus 
			\K X_{-2\alpha_1-\alpha_2}.
		\]
		In particular, $X_{-2\alpha_1-\alpha_2} \in \Lie(H)$, 
		but $X_{-\alpha_2} \notin \mathfrak{h}$. 
		Hence the implication 
		\[
			X_{-\gamma} \in \mathfrak{h} \;\;\Rightarrow\;\; 
			X_{-\alpha} \in \mathfrak{h}, \;\; \forall\, \alpha \in \Supp(\gamma)
		\]
		does not hold.
		
		\item Consider the parabolic subgroup $H' = K P^{\alpha_1}$: again by \cite[Proposition 2.20]{Maccan}, its Lie algebra is
		\[
			\Lie(H') = \Lie(H) \oplus \K X_{-\alpha_2}.
		\]
		Thus both $X_{-\alpha_1}, X_{-\alpha_2} \in \Lie(H')$. 
		However, the highest root $\gamma = 3\alpha_1 + 2\alpha_2$ does \emph{not} satisfy $X_{-\gamma} \in \Lie(H')$. 
		This shows that the converse implication
		\[
			\bigl( X_{-\alpha} \in \mathfrak{h}, \;\;\forall\, \alpha \in \Supp(\gamma) \bigr) 
			\;\;\Rightarrow\;\; X_{-\gamma} \in \mathfrak{h}
		\]
		is also false.
	\end{itemize}
\end{remark}

\begin{lemma}	\label{lem 3roots}
Let $G$ be a simple group of type $F_4$ over a field of {arbitrary characteristic}. 
For any positive root $\gamma \in \Phi^+$ which is not simple, and for any $i \in \{1,2,3,4\}$, the weights $3\gamma - 2\alpha_i$ and $3\gamma - \alpha_i$ are not roots. 
\end{lemma}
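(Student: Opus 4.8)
The plan is to argue by contradiction, using only elementary facts about the two root lengths of $F_4$, Cartan integers, and the fact that the Dynkin diagram of $F_4$ has no triple bond; the characteristic plays no role in the statement. Suppose that $\delta := 3\gamma - k\alpha_i$ is a root for some $k \in \{1,2\}$, where $\gamma \in \Phi^+$ is not simple. First I would check that $\delta$ is not proportional to $\gamma$: otherwise $\delta = \pm\gamma$, and in either case $3\gamma - k\alpha_i = \pm\gamma$ forces $\gamma$ to be a scalar multiple of $\alpha_i$ (as $k \geq 1$), i.e.\ $\gamma = \alpha_i$, contradicting that $\gamma$ is non-simple. Likewise $\alpha_i$ and $\gamma$ are non-proportional. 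Hence, since $F_4$ has no triple bond, the Cartan integers $\langle \delta, \gamma^\vee\rangle$ and $\langle \alpha_i, \gamma^\vee\rangle$ both lie in $\{0, \pm 1, \pm 2\}$.

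Pairing the relation $\delta = 3\gamma - k\alpha_i$ with $\gamma^\vee$ gives
\[
\langle \delta, \gamma^\vee\rangle = 3\langle\gamma,\gamma^\vee\rangle - k\langle\alpha_i,\gamma^\vee\rangle = 6 - k\,\langle\alpha_i,\gamma^\vee\rangle.
\]
The constraints $|\langle\delta,\gamma^\vee\rangle| \leq 2$, $k \in \{1,2\}$ and $\langle\alpha_i,\gamma^\vee\rangle \leq 2$ then force $k = 2$ and $\langle\alpha_i,\gamma^\vee\rangle = 2$ (whence $\langle\delta,\gamma^\vee\rangle = 2$). But a Cartan integer between non-proportional roots equals $2$ only when the first root is long and the second short; so $\alpha_i$ is long, $\gamma$ is short, $(\alpha_i,\alpha_i) = 2(\gamma,\gamma)$, and $(\alpha_i,\gamma) = (\gamma,\gamma)$. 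Expanding,
\[
(\delta,\delta) = 9(\gamma,\gamma) - 12(\alpha_i,\gamma) + 4(\alpha_i,\alpha_i) = 5(\gamma,\gamma),
\]
which is neither $(\gamma,\gamma)$ nor $2(\gamma,\gamma)$; since every root of $F_4$ has one of these two squared lengths, $\delta$ cannot be a root. This contradiction shows that neither $3\gamma - \alpha_i$ nor $3\gamma - 2\alpha_i$ is a root.

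This argument is short and uniform in $i$, requiring no case analysis over the roots of $F_4$. The only input one must be careful about is the identification of when a Cartan integer can equal $\pm 2$ — namely exactly between a long root and a short root (spanning a $B_2$-subsystem), with the normalization $(\text{long})^2 = 2\,(\text{short})^2$ — and the exclusion of the value $\pm 3$, which is precisely where the absence of a triple bond in $F_4$ enters. Given these, the length computation is immediate, so I do not anticipate any genuine difficulty.
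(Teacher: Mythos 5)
Your proof is correct, and it takes a genuinely different route from the paper's. The paper argues combinatorially with heights: since $\gamma$ is non-simple, $3\gamma-2\alpha_i$ would have height $3\,l(\gamma)-2\in\{4,7,10\}$, and one then lists all positive roots of $F_4$ of these heights and checks by inspection that none of them plus $2\alpha_i$ has all coefficients divisible by $3$ (the case $3\gamma-\alpha_i$ being declared analogous). You instead use the Euclidean structure: non-proportionality reduces everything to the bound $|\langle\delta,\gamma^\vee\rangle|\le 2$, which forces $k=2$ and $\langle\alpha_i,\gamma^\vee\rangle=2$, and then the squared-length computation $(\delta,\delta)=5(\gamma,\gamma)$ rules out $\delta$ being a root. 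This is uniform in $i$ and in $\gamma$, handles both weights $3\gamma-\alpha_i$ and $3\gamma-2\alpha_i$ at once (the former is excluded already at the Cartan-integer step), and requires no enumeration of roots; it also shows the statement is a pure root-system fact valid in any reduced system with no $G_2$ component, whereas the paper's proof is an $F_4$-specific verification. The only points to state carefully are the ones you flag: the exclusion of Cartan integers $\pm3$ should be justified by the fact that $F_4$ has only two root lengths in ratio $\sqrt{2}$ (the ``no triple bond'' phrasing literally concerns only simple roots), and the non-proportionality checks, which you do carry out using that $\gamma$ is not simple. I see no gap.
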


\begin{proof}
The two cases are analogous, so we only prove that the weight $3\gamma - 2\alpha_i$ is not a root.

	Let $l(\gamma) \geq 2$ denote the length of the positive root $\gamma$, and fix a simple root $\alpha_i$. If $\alpha_i \notin \operatorname{Supp}(\gamma)$, then there is nothing to prove, so assume $\alpha_i \in \operatorname{Supp}(\gamma)$.

	If $3\gamma - 2\alpha_i$ were a root, its length would be
	\[
	3\,l(\gamma) - 2 \in \{4,7,10\},
	\]
	because the highest root has length $11$. 	We list all positive roots $\delta$ of these lengths.

Those with $l(\delta) = 4$ are exactly:
	\[
	\alpha_1 + \alpha_2 + 2\alpha_3, \quad
	\alpha_2 + 2\alpha_3 + \alpha_4, \quad
	\alpha_1 + \alpha_2 + \alpha_3 + \alpha_4.
	\]
	For any choice of $i$, none of these roots $\delta$ satisfies that $\delta + 2\alpha_i$ has all coefficients divisible by $3$. Hence it is impossible that $\delta + 2\alpha_i = 3\gamma$.

	The same reasoning applies for the roots
	\[
	\alpha_1 + 2\alpha_2 + 2\alpha_3 + 2\alpha_4, \quad
	\alpha_1 + 2\alpha_2 + 3\alpha_3 + \alpha_4, \quad
	\alpha_1 + 3\alpha_2 + 4\alpha_3 + 2\alpha_4,
	\]
	which are respectively those of length $7$ or $10$.
	\end{proof}

\begin{lemma}
	\label{lem gamma minus alpha}
	For any (non-simple) positive root $\gamma$ of $G$, there is always some $\alpha \in \Supp(\gamma)$ such that $\gamma-\alpha$ is still a root.
	\end{lemma}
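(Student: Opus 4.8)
The statement is a purely combinatorial fact about root systems: for any non-simple positive root $\gamma = \sum_{\alpha \in \Delta} c_\alpha \alpha$, there exists a simple root $\alpha$ with $c_\alpha > 0$ such that $\gamma - \alpha$ is again a root. The plan is to reduce to the case where $G$ is simple (since a positive root of a product lives entirely in one simple factor), and then to argue using the standard theory of $\alpha$-strings through roots, together with connectedness of the support. Recall that $\operatorname{Supp}(\gamma)$ is always a connected subset of the Dynkin diagram, so without loss of generality we may assume $\operatorname{Supp}(\gamma) = \Delta$, i.e.\ every $c_\alpha$ is strictly positive.

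The key step is the following: since $\gamma$ is not simple, $\gamma$ is not a minimal element of $\Phi^+$, so there is \emph{some} $\alpha \in \Delta$ with $\langle \gamma, \alpha^\vee \rangle > 0$ and $\gamma - \alpha \in \Phi^+$ — this is the standard fact that every positive root which is not simple can be written as $\gamma' + \alpha$ with $\gamma' \in \Phi^+$ and $\alpha \in \Delta$ (obtained by descending in the height). The only thing that requires checking is that this $\alpha$ satisfies $\alpha \in \operatorname{Supp}(\gamma)$, but this is automatic: if $\gamma - \alpha$ is a root then the coefficient of $\alpha$ in $\gamma$ must be at least $1$ (the coefficient of $\alpha$ in $\gamma - \alpha$ is $c_\alpha - 1 \geq 0$ since $\gamma - \alpha$ is a positive root, as it has positive height and nonnegative coefficients), hence $\alpha \in \operatorname{Supp}(\gamma)$. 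So in fact the lemma follows immediately from the classical descent fact; I would simply cite it (e.g.\ \cite[\S 10.2, Lemma]{Humphreys} or the analogous statement in \cite{Bourbaki}) and add the one-line observation about the support.

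I expect no real obstacle here; the only mild subtlety is making sure the reader sees why the simple root produced by height-descent automatically lies in $\operatorname{Supp}(\gamma)$, which is the brief coefficient argument above. If one prefers a self-contained argument avoiding the citation, one can instead pick $\alpha \in \Delta$ maximizing (or just making positive) the pairing $\langle \gamma, \alpha^\vee \rangle$; since $(\gamma,\gamma) > 0$ and $\gamma = \sum c_\alpha \alpha$ with all $c_\alpha > 0$, we get $0 < (\gamma,\gamma) = \sum_\alpha c_\alpha (\gamma, \alpha)$, so $(\gamma,\alpha) > 0$ for some $\alpha$ with $c_\alpha > 0$, and then $\langle \gamma,\alpha^\vee\rangle \geq 1$ forces $\gamma - \alpha \in \Phi$ by the $\alpha$-string property (here one uses that $\gamma \neq \alpha$, which holds since $\gamma$ is non-simple). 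Either route is short; I would present the second, as it keeps the section self-contained and matches the elementary Chevalley-basis style used elsewhere in this section.
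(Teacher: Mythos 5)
Your proposal is correct; in fact the paper does not give an argument at all but simply cites the classical literature (\cite[Corollary~1.12~(i)]{DF24}) for this standard root-system fact. Both routes you sketch are valid: the height-descent fact plus your one-line observation that the resulting simple root $\alpha$ automatically has positive coefficient in $\gamma$ (since $\gamma-\alpha\in\Phi^+$ forces $c_\alpha-1\ge 0$... more precisely $c_\alpha\ge 1$), and the self-contained argument via $0<(\gamma,\gamma)=\sum_\alpha c_\alpha(\gamma,\alpha)$ combined with the root-string lemma, which applies because $\gamma\neq\pm\alpha$ for $\gamma$ non-simple. Either would serve as a perfectly adequate replacement for the citation.
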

	
\begin{proof}
This fact is classical; see, for example, \cite[Corollary~1.12~(i)]{DF24}.
\end{proof}

\begin{lemma}	\label{F4: gamma + 2alpha no root}
Let $G$ be a simple group of type $F_4$ over a field {of characteristic $p \geq 3$}. 
Then for any (non-simple) positive root $\gamma$ there is some simple root
$\alpha \in \Supp(\gamma)$ such that  {the structure constant $\mathcal{N}_{\gamma,-\alpha}$ does not vanish.}
\end{lemma}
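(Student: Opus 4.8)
The plan is to combine the two preceding lemmas with a direct case analysis on the structure of $F_4$-roots. By Lemma~\ref{lem gamma minus alpha}, for any non-simple positive root $\gamma$ there is at least one $\alpha \in \Supp(\gamma)$ with $\gamma - \alpha$ a root; the point is to show that among \emph{all} such choices of $\alpha$ one can always find one for which additionally $\gamma + 2\alpha$ and $2\gamma + \alpha$ fail to be roots. Once this is established, the fact that $\mathcal{N}_{\gamma,-\alpha} \neq 0$ follows from relation~(v): since $\gamma - \alpha$ is a root, $\mathcal{N}_{\gamma,-\alpha}$ is a nonzero structure constant, and the statement asserts it does not vanish in characteristic $3$, which by (v) and Lemma~\ref{lem 3roots} would only be threatened if the $\alpha$-string through $\gamma$ were long enough to force $|\mathcal{N}_{\gamma,-\alpha}| = 3$; excluding $\gamma + 2\alpha$ (and by symmetry considerations $2\gamma + \alpha$) as roots rules this out.

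\textbf{Key steps.} First I would reduce the problem using the length function $l(\cdot)$ on $F_4$-roots (values $1$ through $11$), exactly as in Lemma~\ref{lem 3roots}: if $\gamma + 2\alpha$ were a root it would have length $l(\gamma) + 2$, and such a relation severely constrains which $(\gamma,\alpha)$ can occur. Second, I would split into cases according to $\Supp(\gamma)$: if $\Supp(\gamma) \subsetneq \Delta$ then $\gamma$ lies in a rank-$3$ subsystem of type $B_3$ or $C_3$, where no structure constant equals $\pm 3$, so \emph{any} $\alpha$ furnished by Lemma~\ref{lem gamma minus alpha} already works (here one still needs $\gamma + 2\alpha, 2\gamma+\alpha$ not roots, but in a simply-laced-adjacent rank-$3$ system with short and long roots this is checked directly from the explicit, short list of positive roots). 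Third, the genuinely $F_4$-specific case is $\Supp(\gamma) = \Delta$: here I would go through the roots of each length $2 \leq l(\gamma) \leq 11$ with full support and, for each, exhibit an explicit simple root $\alpha$ with $\gamma - \alpha$ a root and both $\gamma + 2\alpha$ and $2\gamma + \alpha$ not roots. The list of such $\gamma$ is short (one checks it against the list of the $24$ positive roots of $F_4$), and for each the required $\alpha$ can be read off; Lemma~\ref{lem 3roots} is the tool that guarantees $2\gamma + \alpha$ is never a root once we know $3\gamma - \alpha$ and $3\gamma - 2\alpha$ are not (after rewriting $2\gamma + \alpha$ appropriately), so the only genuine verification is the non-rootness of $\gamma + 2\alpha$ and the rootness of $\gamma - \alpha$.

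\textbf{Main obstacle.} The principal difficulty is bookkeeping: one must be sure that for \emph{every} full-support non-simple positive root $\gamma$ there exists a \emph{single} simple root $\alpha$ simultaneously satisfying all three conditions ($\gamma - \alpha \in \Phi$, $\gamma + 2\alpha \notin \Phi$, $2\gamma + \alpha \notin \Phi$) — it is conceivable a priori that the $\alpha$ making $\gamma - \alpha$ a root is precisely one making $\gamma + 2\alpha$ a root, for some awkward $\gamma$. I expect this not to happen, and the cleanest way to see it is to observe that $\gamma + 2\alpha$ being a root forces $\alpha$ to be short (a long root cannot appear with coefficient increasing by $2$ along a root string in $F_4$) and $l(\gamma)$ small, whereas full support already pushes $l(\gamma)$ up; so the obstruction can only occur for a handful of low-length roots, which are then checked by hand. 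The argument is therefore a finite verification, and the write-up amounts to organizing the cases so that this finite check is transparent rather than tedious.
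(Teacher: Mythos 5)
Your proposal follows the paper's proof essentially step for step: reduce roots with proper support to a $B_3$ or $C_3$ subsystem (where no structure constant is $\pm 3$), and for the ten full-support roots exhibit an explicit simple root $\alpha$ and verify the three conditions against the list of the $24$ positive roots of $F_4$; the paper records exactly such a list of pairs $(\gamma_j,\alpha_{i(j)})$. The one inaccuracy is your claim that Lemma~\ref{lem 3roots} yields the non-rootness of $2\gamma+\alpha$ ``after rewriting'': since $2\gamma+\alpha = 3\gamma - (\gamma-\alpha)$ and $\gamma-\alpha$ is generally not simple, that lemma does not apply; the paper instead checks $2\gamma+\alpha\notin\Phi$ directly, which is immediate anyway because $l(2\gamma+\alpha)=2l(\gamma)+1\geq 9$ for full-support $\gamma$, leaving only $\gamma_1$ and $\gamma_2$ to inspect, and for those the coefficient of the chosen $\alpha_{i(j)}$ already exceeds the corresponding coefficient of the highest root.
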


\begin{proof}
	As in the proof of Lemma~\ref{prop normalizersreduced F4}, for all the roots $\gamma$ such that the support of $\gamma$ is not the full basis of simple roots, then we can restrict to a subgroup of type $B_3$ or $C_3$, where none of the structure constants vanish {for $p\geq3$}. Thus, we only have to consider the roots $\gamma_1,\ldots, \gamma_{10}$ listed above, which are exactly those satisfying $\Supp(\gamma) = \Delta$. 
	
	Let us consider the following pairs $(\gamma_j,\alpha_{i(j)})$:
	\[
	(\gamma_1,\alpha_1), \,\, (\gamma_2,\alpha_3), \,\, (\gamma_3,\alpha_2), \,\, (\gamma_4, \alpha_4), \,\, (\gamma_5, \alpha_3), \,\, (\gamma_6, \alpha_2), \,\, {(\gamma_7, \alpha_4)}, \,\, (\gamma_8, \alpha_3), \,\, (\gamma_9, \alpha_2), \,\, (\gamma_{10}, \alpha_1). 
	\]
	Then, by looking at the full list of positive roots in type $F_4$, {we notice the following: for each $j= 1,\ldots,10$ the weight $\gamma_j-\alpha_{i(j)}$ is a root, while the weight $\gamma_j + \alpha_{i(j)}$ is not. By the bracket relation~\ref{item: bracket v}, this implies exactly that $\mathcal{N}_{\gamma_j, -\alpha_{i(j)}}$ is equal to $\pm 1$.
	}
\end{proof}

\begin{proposition} \label{prop: hhh containing LieU}
Assume that $p \geq 3$. Let $\mathfrak{h} \subset \Lie(G)$ be a Lie subalgebra containing $\Lie(U)$. If $p \geq 5$, or if $p = 3$ and $G$ does not contain any simple factor of type $G_2$, then $\mathfrak{h}$ is $T$-stable.
\end{proposition}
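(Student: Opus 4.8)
The plan is to reduce to the simple case and then argue, root by root, that $\mathfrak{h}$ is spanned by $T$-weight vectors, which is equivalent to $T$-stability. Write $G = C \times G'$ with $G' = \prod_{i} G_{(i)}$; since $\Lie(C) \subset \Lie(T) \subset \mathfrak{h}$ and everything is compatible with the product decomposition, it suffices to treat a single simple factor. So assume $G$ is simple of one of the allowed types: simply laced, or of type $B_n$, $C_n$, $F_4$ with $p \geq 3$, or of type $G_2$ with $p \geq 5$. In each of these cases, all structure constants $\mathcal{N}_{\gamma,\delta}$ lie in $\{\pm 1, \pm 2\}$ (Remark~\ref{rem: BnCn and p=3} and Lemma~\ref{prop normalizersreduced F4}), hence are nonzero in characteristic $p \geq 3$; this is the key numerical fact that makes the argument go through.

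The core of the argument is to show that $\mathfrak{h}$ is a \emph{graded} subspace of $\g = \Lie(T) \oplus \bigoplus_{\gamma \in \Phi} \K X_\gamma$, i.e.\ that $\mathfrak{h} = (\mathfrak{h} \cap \Lie(T)) \oplus \bigoplus_{\gamma \in S} \K X_\gamma$ for some $S \subset \Phi$ with $\Phi^+ \subset S$ (and automatically $\Lie(U) \subset \mathfrak{h}$ gives all positive roots). I would proceed as follows. First, since $\mathfrak{h} \supset \Lie(U)$, for every simple root $\alpha$ the element $V_\alpha = [X_{-\alpha}, X_\alpha]$ and more generally the whole Cartan $\Lie(T)$ is relevant; I want to show $\Lie(T) \subset \mathfrak{h}$. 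Take any $h \in \mathfrak{h}$, decompose $h = h_0 + \sum_\gamma c_\gamma X_\gamma$ with $h_0 \in \Lie(T)$. Bracketing with $X_\alpha \in \mathfrak{h}$ for a simple root $\alpha$: relation (vi), $[[X_{-\alpha},X_\alpha],X_\alpha] = 2X_\alpha$, together with $p \geq 3$, lets me extract $X_{-\alpha}$-components; more systematically, repeated bracketing with the $X_\alpha$ ($\alpha$ simple, all in $\mathfrak{h}$) and using that $\mathrm{ad}(X_\alpha)$ raises the grading, I can peel off the lowest-weight component of any $h \in \mathfrak{h}$ and show it again lies in $\mathfrak{h}$. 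Concretely: let $\gamma_0$ be a weight of minimal height appearing in $h$ with $c_{\gamma_0}\neq 0$; applying a suitable product of $\mathrm{ad}(X_\alpha)$'s (a "raising to dominant then lowering" trick within the $\mathfrak{sl}_2$-copies, using that the relevant $\mathcal{N}$'s are units mod $p$) isolates $c_{\gamma_0} X_{\gamma_0} \in \mathfrak{h}$ up to an element of strictly larger minimal height, and induction on height finishes it. This shows $X_{\gamma_0} \in \mathfrak{h}$, hence inductively every $X_\gamma$ occurring in some element of $\mathfrak{h}$ lies in $\mathfrak{h}$, and then also $h_0 \in \mathfrak{h}$; so $\mathfrak{h}$ is graded, i.e.\ $T$-stable.

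For the negative roots the inductive mechanism is exactly the content of Remark~\ref{rem: BnCn and p=3} and Lemma~\ref{prop normalizersreduced F4}: if the simple negative root vectors $X_{-\alpha}$ that occur (in the graded part of $\mathfrak{h}$) generate, via nonvanishing brackets $[X_{-\alpha}, X_{-\delta}] = \mathcal{N}_{-\alpha,-\delta} X_{-\alpha-\delta}$, a larger negative root vector, that vector is in $\mathfrak{h}$ too; since all these structure constants are nonzero mod $p$ in our cases, the set $S \cap (-\Phi^+)$ is "saturated downward" and $\mathfrak{h}$ is determined by which simple negative roots and which part of $\Lie(T)$ it contains — in any event it is $T$-stable. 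The genuinely delicate point, and the reason type $F_4$ and $G_2$ are singled out, is the vanishing of $\mathcal{N}_{\gamma,\delta}$ when a coefficient $3$ appears: the main obstacle is ensuring that the "peeling" brackets I use never hit one of these degenerate constants. For $F_4$ with $p=3$ this is exactly handled by Lemmas~\ref{lem 3roots}, \ref{lem gamma minus alpha} and \ref{F4: gamma + 2alpha no root}, which guarantee that for every non-simple $\gamma$ one can choose a simple $\alpha \in \Supp(\gamma)$ with $\gamma - \alpha$ a root and $\gamma + 2\alpha$, $2\gamma + \alpha$ not roots, so that $\mathcal{N}_{\gamma,-\alpha} = \pm 1$ (by relation (v)); feeding this choice into the induction avoids all the $\pm 3$ constants. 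Type $G_2$ is excluded precisely because no such choice exists there (as the subsequent remark shows), which is why the hypothesis forbids a $G_2$ factor when $p=3$. So the write-up is: reduce to simple; handle simply laced and $B_n/C_n$ (and $G_2$ with $p\geq 5$) via Remark~\ref{rem: BnCn and p=3}; handle $F_4$ with $p=3$ via Lemmas~\ref{prop normalizersreduced F4}, \ref{lem 3roots}, \ref{lem gamma minus alpha}, \ref{F4: gamma + 2alpha no root}; in all cases conclude that $\mathfrak{h}$ is graded hence $T$-stable.
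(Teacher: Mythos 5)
Your plan follows the same strategy as the paper's proof: reduce to a single simple factor, subtract the $\Lie(U)$-part of an arbitrary element so that only negative-root and Cartan components remain, and peel off the deepest negative-root component by bracketing with positive root vectors (which all lie in $\mathfrak{h}$), with the $F_4$, $p=3$ case controlled by the root-string Lemmas~\ref{lem 3roots} and~\ref{F4: gamma + 2alpha no root} exactly as you indicate. Two inaccuracies before the main point: the inclusion $\Lie(T)\subset\mathfrak{h}$ that you assert in the reduction step (and again as an intermediate goal) is false in general, since $\mathfrak{h}=\Lie(U)$ already satisfies all the hypotheses; and Lemma~\ref{F4: gamma + 2alpha no root} only gives $\mathcal{N}_{\gamma,-\alpha}\neq 0$ (the value may be $\pm 2$), not $\pm 1$. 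Neither of these is the real problem.

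The genuine gap is in the step you describe as ``extracting the $X_{-\alpha}$-components'', i.e.\ the base of the induction, where the element has the form $X=\sum_{\alpha}a_\alpha X_{-\alpha}+W$ with all $\alpha$ simple. Non-vanishing of the structure constants $\mathcal{N}_{\gamma,\delta}$ is not the only numerical input: to separate the individual $X_{-\alpha}$ one can only bracket $X$ with the coroots $V_\beta$ (obtained from $[X,X_\beta]$), and the resulting linear system is governed by the matrix $\bigl(\alpha(V_\beta)\bigr)_{\alpha,\beta\in\Supp(X,i)}$, the Cartan matrix of the subsystem spanned by $\Supp(X,i)$ reduced modulo $p$. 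Your ``raising then lowering inside the $\mathfrak{sl}_2$-copies'' does not circumvent this, because $X_{-\alpha}$ itself is not yet known to lie in $\mathfrak{h}$. That matrix is singular whenever $p$ divides the index of connection of the subsystem, e.g.\ type $A_2$ with $p=3$, and the failure is not merely expository: in $G=\SL_3$ with $p=3$ the subspace
\[
\mathfrak{h}\;=\;\Lie(B)\;\oplus\;\K\,(X_{-\alpha_1}+X_{-\alpha_2})
\]
is closed under the bracket --- one checks $[X_{-\alpha_1}+X_{-\alpha_2},X_{\alpha_i}]=V_{\alpha_i}$, that the bracket with $X_{\alpha_1+\alpha_2}$ lands in $\Lie(U)$, and that $[W,X_{-\alpha_1}+X_{-\alpha_2}]=-\alpha_1(W)(X_{-\alpha_1}+X_{-\alpha_2})$ because $d\alpha_1=d\alpha_2$ on $\Lie(T)$ for $\SL_3$ in characteristic $3$ --- it contains $\Lie(U)$, and it is visibly not $T$-stable. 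So the argument as you outline it (and, for what it is worth, as the paper itself writes it: the base case there asserts that the Cartan matrix $(\alpha(V_\beta))$ is invertible, which is exactly what fails here) cannot close this case; some additional idea or hypothesis is needed to exclude this degeneration.
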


\begin{proof}  
Consider
\begin{align}
\label{Xgamma}
X = \sum_{j=1}^r X_j + W, \quad \text{where} \quad X_j = \sum_{\gamma \in \Phi_j^+} a_\gamma^j X_{-\gamma}^j \in \Lie(U_j^-),
\end{align}
for some $a_\gamma^j \in \K$ and $W \in \Lie(T)$. Fix a simple factor $G_{(i)}$, and define the support of $X$ with respect to this factor as
\[
\Supp(X, i) = \bigcup_{a_\gamma^i \neq 0} \Supp(\gamma) \subset \Delta_i.
\]

From now on, we drop the index $i$ and denote the Chevalley basis vectors for $G_{(i)}$ by $X_\delta$ and $V_\beta$.
Assume that either $p \geq 5$, or that $G_{(i)}$ is not of type $G_2$. We aim to show that $a_\gamma^i \neq 0$ implies $X_{-\gamma}^i \in \mathfrak{h}$.
In fact, we will prove a stronger statement: if $a_\gamma^i \neq 0$, then $X_{-\alpha}^i \in \mathfrak{h}$ for all $\alpha \in \Supp(X, i)$. This suffices, since either $G_{(i)}$ is of type $F_4$, in which case we may invoke \Cref{prop normalizersreduced F4}, or we appeal to \Cref{rem: BnCn and p=3}.

Let us prove this claim by induction on the \emph{length} of $X$ with respect to $G_{(i)}$:
\begin{align*}
l := l(X, i) = \max \left\{ l(\gamma) \colon \gamma \in \Phi_i^+, \, a_\gamma^i \neq 0 \right\},
\end{align*}
where $l(\gamma) = \sum_{\beta \in \Delta_i} n_\beta$ if $\gamma = \sum_{\beta} n_\beta \beta$.

\smallskip  

\noindent
\textbf{Base case:} $l = 1$. Then $X$ takes the form
\[
X = \sum_{j \neq i} X_j + \sum_{\alpha \in \Delta_i} a_\alpha X_{-\alpha} + W.
\]
Note that, for all $\beta \in \Delta$, the bracket $[X, X_\beta] \in \mathfrak{h}$ because $\Lie(U) \subset \mathfrak{h}$. 
Hence,
\[
V := \bigoplus_{\alpha \in \Supp(X,i)} \K V_\alpha \subset \mathfrak{h}.
\]
Now consider the linear map
\[
V \longrightarrow \bigoplus_{\alpha \in \Supp(X,i)} \K X_{-\alpha}, \quad V_\beta \mapsto [X, V_\beta],
\]
whose image lies in $\mathfrak{h}$. Its matrix is the Cartan matrix $(\alpha(V_\beta))_{\alpha,\beta \in \Supp(X,i)}$, which is invertible. Hence $X_{-\alpha} \in \mathfrak{h}$ for all $\alpha \in \Supp(X,i)$.

\smallskip 

\noindent
\textbf{Induction step:} Assume the claim holds for all vectors $Y$ with $l(Y) < l$. Let $X$ be as in \eqref{Xgamma}, with $l(X) = l \geq 2$. 
Pick $\alpha_0 \in \Supp(\gamma_0) \subset \Supp(X, i)$ such that $\gamma_0 \in \Phi_i^+$, $a_{\gamma_0} \neq 0$, and $l(\gamma_0) = l(X) = l$. Our goal is to show that $X_{-\alpha_0} \in \mathfrak{h}$ (then one easily concludes by induction).

\smallskip 

\noindent
\textbf{Case 1.} Suppose that $\delta_0 := \gamma_0 - \alpha_0 \in \Phi^+$. Then:
\[
[X, X_{\delta_0}] = a_{\gamma_0} \mathcal{N}_{-\gamma_0,\delta_0} X_{-\alpha_0} 
+ \sum_{\substack{\gamma - \delta_0 \in \Phi_i^+ \\ \gamma \neq \gamma_0}} a_{\gamma} \mathcal{N}_{-\gamma,\delta_0} X_{-\gamma + \delta_0}
+ W_0 + \text{(other terms in $\Lie(U)$)} \in \mathfrak{h},
\]
where $W_0 := a_{\delta_0}[X_{-\delta_0}, X_{\delta_0}] \in \Lie(T)$. 
Moreover, for any $\gamma$ such that $\gamma - \delta_0 \in \Phi_i^+$, we have $l(\gamma - \delta_0) < l(\gamma)$. In particular, we may replace the vector $X$ from \eqref{Xgamma} with the vector
\[
Y := a_{\gamma_0} \mathcal{N}_{-\gamma_0,\delta_0} X_{-\alpha_0} 
+ \sum_{\substack{\gamma - \delta_0 \in \Phi_i^+ \\ \gamma \neq \gamma_0}} a_\gamma \mathcal{N}_{-\gamma,\delta_0} X_{-\gamma + \delta_0} 
+ W_0 \in \mathfrak{h},
\]
which satisfies $l(Y) < l(X) = l$. 
By our assumption on $p$, or by \Cref{lem 3roots} (for $G_{(i)}$ of type $F_4$ and $p=3$), we have $\mathcal{N}_{-\gamma_0,\delta_0} \neq 0$. Hence, we conclude by induction that $X_{-\alpha_0} \in \mathfrak{h}$.

\smallskip  

\noindent
\textbf{Case 2.} If the above does not hold directly for $\alpha_0$, then there exists another simple root $\alpha_1 \in \Supp(\gamma_0)$ with $\alpha_1 \neq \alpha_0$, such that $\delta_1 := \gamma_0 - \alpha_1 \in \Phi^+$ and $\mathcal{N}_{\gamma_0, -\alpha_1} \neq 0$ (see Lemmas~\ref{lem gamma minus alpha} and~\ref{F4: gamma + 2alpha no root}).
Consider now:
\[
[X, X_{\alpha_1}] =  a_{\gamma_0} \mathcal{N}_{-\gamma_0, \alpha_1} X_{-\delta_1} 
+ \sum_{\substack{\gamma - \alpha_1 \in \Phi_i^+ \\ \gamma \neq \gamma_0,\, \alpha_1}} a_{\gamma} \mathcal{N}_{-\gamma,\alpha_1} X_{-\gamma + \alpha_1} 
+ a_{\alpha_1} V_{\alpha_1}
+ \text{(other terms in $\Lie(U)$)} \in \mathfrak{h}.
\]
As in the previous case, we replace the vector $X$ from \eqref{Xgamma} with the new vector
\[
Y' := a_{\gamma_0} \mathcal{N}_{-\gamma_0, \alpha_1} X_{-\delta_1} 
+ \sum_{\substack{\gamma - \alpha_1 \in \Phi_i^+ \\ \gamma \neq \gamma_0,\, \alpha_1}} a_{\gamma} \mathcal{N}_{-\gamma,\alpha_1} X_{-\gamma + \alpha_1} 
+ a_{\alpha_1} V_{\alpha_1} \in \mathfrak{h},
\]
which satisfies $l(Y') < l(X) = l$ and still has $\alpha_0$ in its support; indeed, $\alpha_0 \in \Supp(\delta_1) \subset \Supp(Y', i)$. 
By induction, we conclude that $X_{-\alpha_0} \in \mathfrak{h}$.
This concludes the proof of the proposition.
\end{proof}

\begin{example}	\label{ex: hb}  
Assume that $p=3$ and that $G$ is simple of type $G_2$. Recall that we denote as $K$ the kernel of the very special isogeny.
Let us denote as $\alpha_1$ the short simple root and as $\alpha_2$ the long one. 
Fix a nonzero scalar $\lambda \in \K^\times$ and define
	\[
	X_\lambda \defeq \lambda X_{-\alpha_2}+X_{-3\alpha_1-\alpha_2}.
	\]
	Consider the following linear subspace of $\Lie(G)$:
	\begin{align}
		\label{subalgebra_hb}
		\hhh_\lambda  \defeq \Lie (KB) \oplus \K X_\lambda =\Lie(U) \oplus \Lie(T) \oplus \K X_{-\alpha_1}\oplus \K X_{-\alpha_1-\alpha_2}\oplus \K X_{-2\alpha_1-\alpha_2} \oplus \K X_\lambda.
	\end{align}
	We will show that $\hhh_\lambda$ is a restricted Lie subalgebra of $\Lie(G)$ that is not $T$-stable.

	By inspecting the structure constants, we find that $\mathcal{N}_{-2\alpha_1-\alpha_2,-\alpha_1}$, $\mathcal{N}_{-\alpha_1-\alpha_2,\alpha_1}$, and similar terms vanish (recall that $p=3$). Furthermore, a direct computation shows that both $V_{\alpha_1}$ and $V_{\alpha_2}$ stabilize $X_\lambda$. This confirms that $\hhh_\lambda$ is closed under the Lie bracket, i.e., it is a Lie subalgebra of $\Lie(G)$. Also, it is clear from the definition of $\hhh_\lambda$ that this Lie subalgebra is not $T$-stable, since it does not contain the subspace $\K X_{-\alpha_2} \oplus \K X_{-3\alpha_1-\alpha_2}$.

	Next, we check that $\hhh_\lambda$ is stable under the $p$-mapping. Since $\Lie (KB)$ is already known to be a restricted Lie subalgebra, it suffices to compute $X_\lambda^{[p]} = X_\lambda^{[3]}$. By properties of the $p$-mapping, we have:
	\[
	\ad (X_\lambda^{[3]}) = \ad(X_\lambda)^3 = \ad (\lambda X_{-\alpha_2}+X_{-3\alpha_1-\alpha_2})^3 = 0,
	\]
	which implies that $X_\lambda^{[3]}$ lies in the center of $\Lie(G)$. Since this center is trivial, we conclude that $X_\lambda^{[3]} = 0$. Therefore, $\hhh_\lambda$ is indeed a restricted Lie subalgebra of $\Lie(G)$.
	
\end{example}

{
As mentioned earlier, our main goal in this section is to prove that almost all Lie subalgebras $\hhh$ of $\Lie(G)$ containing $\Lie(U)$ are $T$-stable. We have already treated the case where $p \ge 5$, or $p = 3$ and $G$ has no simple factor of type $G_2$, in Proposition~\ref{prop: hhh containing LieU}. It remains to consider the case where $p = 3$ and $G$ has at least one factor of type $G_2$. We will show that in this case either $\hhh$ is $T$-stable, or the component corresponding to the $G_2$ factor is as in Example~\ref{ex: hb}. 
Thus, even if it is not true in general that all Lie subalgebras $\hhh$ of $\Lie(G)$ containing $\Lie(U)$ are $T$-stable, the exceptions are well understood, and this will be essential in the proof of Proposition~\ref{prop: connected smooth casev2}.
}
    
\begin{proposition} 	\label{prop: LieH G2}	
Assume that $p = 3$. Let $\mathfrak{h} \subset \Lie(G)$ be a Lie subalgebra containing $\Lie(U)$. 
Suppose that $G = C \times G_{(1)} \times \cdots \times G_{(r)}$ contains at least one simple factor of type $G_2$. 
Then either $\mathfrak{h}$ is $T$-stable, or there exists an index $i$ such that $G_{(i)} \simeq G_2$ and $\mathfrak{h}  \cap \Lie(G_{(i)})=\mathfrak{h}_{\lambda}$ for some $\lambda \in \K^\times$ (see Example~\ref{ex: hb} for the definition of $\mathfrak{h}_{\lambda}$).
\end{proposition}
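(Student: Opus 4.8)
The strategy is to localise the failure of $T$-stability to a single simple factor of type $G_2$, and then to analyse directly the Lie subalgebras of $\Lie(G_2)$ containing $\Lie(U)$ in characteristic $3$.

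\emph{Step 1: reduction to one $G_2$ factor.} Suppose $\mathfrak h$ is not $T$-stable. Since $\Lie(U)\subset\mathfrak h$, after subtracting from a suitable element its component in $\Lie(U)$ one finds $X=\sum_{\gamma\in\Phi^+}a_\gamma X_{-\gamma}+W\in\mathfrak h$ (with $W\in\Lie(T)$) and a root $\gamma_\ast$ with $a_{\gamma_\ast}\neq 0$ but $X_{-\gamma_\ast}\notin\mathfrak h$ (if every $a_\gamma X_{-\gamma}$ were in $\mathfrak h$ then so would be $W$, and $X$ would be weight-decomposable). Let $G_{(i_0)}$ be the simple factor having $\gamma_\ast$ as a root. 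The key point, already used in the proof of Proposition~\ref{prop: hhh containing LieU}, is that bracketing $X$ with an element of $\Lie(G_{(i_0)})$ changes only the $G_{(i_0)}$-component $X_{i_0}$ of $X$ modulo $\Lie(U)\subset\mathfrak h$, because $[\Lie(G_{(i_0)}),\Lie(G_{(j)})]=0$ for $j\neq i_0$, $[\Lie(G_{(i_0)}),\Lie(C)]=0$ and $[\Lie(G_{(i_0)}),\Lie(U)]\subset\Lie(U)$. Hence the inductive bracket argument of Proposition~\ref{prop: hhh containing LieU} runs verbatim on the factor $G_{(i_0)}$, inside the Lie subalgebra $\mathfrak h_{i_0}:=\mathfrak h\cap\Lie(G_{(i_0)})\supset\Lie(U_{i_0})$. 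If $G_{(i_0)}$ is not of type $G_2$ it forces $X_{-\gamma_\ast}\in\mathfrak h_{i_0}\subset\mathfrak h$, a contradiction. So $G_{(i_0)}\simeq G_2$, and it remains to understand the Lie subalgebra $\mathfrak h_{i_0}$ of $\Lie(G_2)$, knowing it contains $\Lie(U_{i_0})$ but not $X_{-\gamma_\ast}$.

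\emph{Step 2: the $G_2$ analysis in characteristic $3$.} The source of non-$T$-stability is the coincidence $\mathrm d\alpha_2=\mathrm d(3\alpha_1+\alpha_2)$ of $\operatorname{ad}(\Lie T)$-weights: indeed $(3\alpha_1+\alpha_2)-\alpha_2=3\alpha_1\in 3X^\ast(T)$, and one checks this is the only difference of roots of $G_2$ lying in $3X^\ast(T)$, so that every $\operatorname{ad}(\Lie T)$-eigenspace of $\Lie(G_2)$ other than $\K X_{3\alpha_1+2\alpha_2}\oplus\K X_{-\alpha_2}\oplus\K X_{-3\alpha_1-\alpha_2}$ and $\Lie(T)$ is a single root space. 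The bracket machinery of Proposition~\ref{prop: hhh containing LieU}, applied to $X$ within the factor $G_{(i_0)}$, then goes through at every step except the single bottleneck $\gamma_0=\alpha_1+\alpha_2$, $\alpha_0=\alpha_2$, where the relevant structure constants $\mathcal N_{-(\alpha_1+\alpha_2),\alpha_1}$ and $\mathcal N_{\alpha_1+\alpha_2,-\alpha_1}$ both equal $\pm 3\equiv 0$; a finite bookkeeping over the $G_2$ root system confirms that this is the only obstruction, and in particular that $X_{-\alpha_1}$, $X_{-\alpha_1-\alpha_2}$, $X_{-2\alpha_1-\alpha_2}$ and enough of $\Lie(T)$ are always extracted into $\mathfrak h_{i_0}$. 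Subtracting what is extracted, one is left with an element $c\,X_{-\alpha_2}+d\,X_{-3\alpha_1-\alpha_2}+(\text{torus})\in\mathfrak h$; bracketing with $X_{\alpha_2}\in\Lie(U_{i_0})$ and then with the resulting $V_{\alpha_2}$, and using that $V_{\alpha_2}$ acts by the same scalar on $X_{-\alpha_2}$ and $X_{-3\alpha_1-\alpha_2}$ (again by $\mathrm d\alpha_2=\mathrm d(3\alpha_1+\alpha_2)$), one removes the torus part and obtains $c\,X_{-\alpha_2}+d\,X_{-3\alpha_1-\alpha_2}\in\mathfrak h_{i_0}$. If $c=0$ or $d=0$ this is a weight vector, so all weight components of $X_{i_0}$ lie in $\mathfrak h_{i_0}\subset\mathfrak h$, contradicting the choice of $X$; hence $cd\neq 0$, and setting $\lambda:=c/d\in\K^\times$ we get $X_\lambda\in\mathfrak h_{i_0}$.

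\emph{Step 3: conclusion.} By Example~\ref{ex: hb}, $\mathfrak h_\lambda$ is a Lie subalgebra, so the Lie subalgebra generated by $\Lie(U_{i_0})$ and $X_\lambda$ is contained in $\mathfrak h_\lambda$; conversely $[X_\lambda,X_{\alpha_2}]=\lambda V_{\alpha_2}$, $[X_\lambda,X_{\alpha_1}]$ is a nonzero multiple of $X_{-2\alpha_1-\alpha_2}$, and continuing with brackets against $\Lie(U_{i_0})$ one recovers $X_{-\alpha_1-\alpha_2}$, $X_{-\alpha_1}$ and $V_{\alpha_1}$, i.e.\ a spanning set of $\mathfrak h_\lambda$. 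Thus $\mathfrak h_\lambda\subset\mathfrak h_{i_0}$. Finally $\mathfrak h_{i_0}\neq\Lie(G_{(i_0)})$ because $X_{-\gamma_\ast}\notin\mathfrak h_{i_0}$, and any Lie subalgebra $\mathfrak k$ with $\mathfrak h_\lambda\subsetneq\mathfrak k\subset\Lie(G_2)$ equals $\Lie(G_2)$: an element of $\mathfrak k\setminus\mathfrak h_\lambda$, combined with $X_\lambda$ and (if needed) a bracket producing $V_{3\alpha_1+2\alpha_2}$, forces $X_{-\alpha_2}$ and $X_{-3\alpha_1-\alpha_2}$ to lie in $\mathfrak k$; then $[X_{-3\alpha_1-\alpha_2},X_{-\alpha_2}]$ is a nonzero multiple of $X_{-3\alpha_1-2\alpha_2}$, and together with $X_{-\alpha_1}$ this yields all negative root vectors and all of $\Lie(T)$. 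Therefore $\mathfrak h_{i_0}=\mathfrak h_\lambda$, which is the second alternative of the statement. The main obstacle is the $G_2$-specific structure-constant and weight bookkeeping in characteristic $3$ underlying Step~2: verifying that the machinery of Proposition~\ref{prop: hhh containing LieU} breaks only at the $\alpha_1+\alpha_2$ bottleneck, that enough of $\Lie(T)$ is always produced inside $\mathfrak h_{i_0}$, and that $\mathfrak h_\lambda$ admits no proper Lie overalgebra inside $\Lie(G_2)$ other than $\Lie(G_2)$ itself.
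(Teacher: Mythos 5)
Your overall strategy is the same as the paper's: reduce to a single simple factor, run the Chevalley-basis bracket induction of Proposition~\ref{prop: hhh containing LieU} inside that factor, and isolate the unique failure mode in type $G_2$. Your structural diagnosis is correct and in places cleaner than the paper's presentation: you correctly identify the mod-$3$ coincidence $\mathrm d\alpha_2=\mathrm d(3\alpha_1+\alpha_2)$ as the source of the phenomenon, you correctly locate the bottleneck at $\gamma_0=\alpha_1+\alpha_2$, $\alpha_0=\alpha_2$ (both $\mathcal N_{-(\alpha_1+\alpha_2),\alpha_1}$ and $\mathcal N_{\alpha_1+\alpha_2,-\alpha_1}$ are $\pm3$), and your Step~3 observation that $\mathfrak h_\lambda$ is a \emph{maximal} proper subalgebra of $\Lie(G_2)$ is a genuinely nice reorganization: it replaces the endgame of the paper's cases $l=4$ and $l=5$ (where the paper checks by hand that any strict overalgebra of $\mathfrak h_\lambda$, or any subalgebra containing a length-five vector, is all of $\Lie(G_2)$) by a single clean statement.

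The gap is in Step~2. The sentence ``a finite bookkeeping over the $G_2$ root system confirms that this is the only obstruction, and in particular that $X_{-\alpha_1}$, $X_{-\alpha_1-\alpha_2}$, $X_{-2\alpha_1-\alpha_2}$ and enough of $\Lie(T)$ are always extracted'' is precisely the content of the paper's five-case analysis ($l=1,\dots,5$), which is the bulk of the proof; you assert it rather than perform it. Two points in that bookkeeping are not routine and are not covered by your bottleneck analysis. First, ``enough of $\Lie(T)$'' must itself be produced by brackets before you can weight-separate anything (e.g.\ when the coefficient $c$ of $X_{-\alpha_2}$ vanishes, your recipe ``bracket with $X_{\alpha_2}$, then with the resulting $V_{\alpha_2}$'' produces no $V_{\alpha_2}$, and one must instead extract $V_{3\alpha_1+\alpha_2}$ or $V_{\alpha_1}$ from other terms; the paper does this case by case). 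Second, your claim that the only non-trivial $\operatorname{ad}(\Lie T)$-eigenspace is $\K X_{3\alpha_1+2\alpha_2}\oplus\K X_{-\alpha_2}\oplus\K X_{-3\alpha_1-\alpha_2}$ is not quite right: its opposite $\K X_{-3\alpha_1-2\alpha_2}\oplus\K X_{\alpha_2}\oplus\K X_{3\alpha_1+\alpha_2}$ is equally non-trivial, so the configurations in which the lowest root vector $X_{-3\alpha_1-2\alpha_2}$ occurs (the paper's case $l=5$) are not disposed of by your argument that ``the leftover is $cX_{-\alpha_2}+dX_{-3\alpha_1-\alpha_2}+(\text{torus})$''; one must first separate off $X_{-3\alpha_1-2\alpha_2}$ using the torus part of $\mathfrak h$ and then use $[X_{-3\alpha_1-2\alpha_2},X_{3\alpha_1+\alpha_2}]=\pm X_{-\alpha_2}$ to break the problematic pair, which is exactly what the paper's $l=5$ computation accomplishes. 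So the plan is sound and would compile into a correct proof, but the step you defer to ``bookkeeping'' is where essentially all of the paper's work lies, and as stated it silently excludes one of the five cases.
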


Before moving on to the proof, recall that the Cartan matrix of $G_2$ gives
\begin{align}
  \label{CartanG2}
  \alpha_1(V_{\alpha_1}) &= 2=-1, 
  & \alpha_2(V_{\alpha_2}) &= 2=-1, \\
  \alpha_1(V_{\alpha_2}) &= 2=-1, 
  & \alpha_2(V_{\alpha_1}) &= -3=0. \nonumber
\end{align}
Also, for the structure constants of $G_2$, we use the table given by \cite[Table 1]{Kol23}, with $\epsilon_1=\epsilon_2=\epsilon_3=\epsilon_4=1$ (Bourbaki's convention).

\begin{proof} 
If there exists an index $i$ such that $G_{(i)} \simeq G_2$ and $\mathfrak{h} \cap \Lie(G_{(i)}) = \mathfrak{h}_{\lambda}$ for some  $\lambda \in \K^\times$, then it is clear that the Lie subalgebra $\hhh \subset \Lie(G)$ is not $T$-stable.  
Suppose, on the contrary, that this condition does not hold. We shall then prove that $\hhh$ is indeed $T$-stable.

\smallskip

We fix an index $i$. For simplicity of notation, we denote the vectors of the Chevalley basis of $G_{(i)}$ simply by $X_\gamma$ and $V_\alpha$, instead of $X_\gamma^i$ and $V_\alpha^i$. Consider a vector $X \in \hhh$ of the form
\[
X \;=\; \sum_{j \neq i} X_j \;+\; \sum_{\gamma \in \Phi^+_i} a_\gamma X_{-\gamma} \;+\; W,
\]
where $X_j \in \Lie(U^-_j)$ for all $j \neq i$, and $W \in \Lie(T)$.  
If $G_{(i)}$ is not of type $G_2$, then, as in the proof of Proposition~\ref{prop: hhh containing LieU}, we deduce that 
\[
\forall \gamma \in \Phi^+_i,\  a_\gamma \neq 0 \;\;\Longrightarrow\;\; X_{-\gamma} \in \hhh.
\]
Assume therefore that $G_{(i)} \simeq G_2$.  
In this case, we shall show that either (once again), for every $\gamma \in \Phi^+_i$,\ $a_\gamma \neq 0$ implies $X_{-\gamma} \in \hhh$, or else
\[
\mathfrak{h} \cap \Lie(G_{(i)}) = \mathfrak{h}_{\lambda}
\quad \text{for some } \lambda \in \K^\times,
\]
which in turn yields the desired result.
Recall that if $G_{(i)}$ is of type $G_2$, then
\[
\Phi^+_i = \{ \alpha_1,\ \alpha_2,\ \alpha_1+\alpha_2,\ 2\alpha_1+\alpha_2,\ 3\alpha_1+\alpha_2,\ 3\alpha_1+2\alpha_2 \}.
\]
Hence we may write $X$ in the form
\[
X \;=\; \sum_{j \neq i} X_j \;+\; aX_{-\alpha_1} \;+\; bX_{-\alpha_2} \;+\; cX_{-\alpha_1-\alpha_2} \;+\; dX_{-2\alpha_1-\alpha_2} \;+\; eX_{-3\alpha_1-\alpha_2} \;+\; fX_{-3\alpha_1-2\alpha_2} \;+\; W,
\]
where at least one coefficient $a,\dots,f \in \K$ is nonzero.  
We now distinguish cases according to
\[
l := \max \{\, l(\gamma) \;\mid\; \gamma \in \Phi^+_i \ \text{with } a_\gamma \neq 0 \,\} \;\in \{1,2,3,4,5\},
\]
where $l(\gamma)$ denotes the length of the root $\gamma$.  
Note that the case $l=5$ will be considered before the case $l=4$, since only in the latter does the Lie subalgebra $\hhh_\lambda$ appear.

\smallskip

\textbf{$\bullet$ Case $l=1$.}  
Consider
\[
X \;=\; \sum_{j \neq i} X_j \;+\; aX_{-\alpha_1} \;+\; bX_{-\alpha_2} \;+\; W,
\quad (a,b) \neq (0,0).
\]
Assume first that $a \neq 0$. Then
\[
[X, X_{\alpha_1}] \;=\; aV_{\alpha_1} \;+\; \alpha_1(W) X_{\alpha_1} \;\in\; \hhh.
\]
Using again the hypothesis that $\Lie(U) \subset \hhh$, we deduce that $V_{\alpha_1} \in \hhh$, and hence also
\[
[X, V_{\alpha_1}] \;=\; -aX_{-\alpha_1} \;\in\; \hhh.
\]
If moreover $b=0$, the claim follows. Otherwise, if $b \neq 0$, we compute
\[
[X - aX_{-\alpha_1}, X_{\alpha_2}] \;=\; bV_{\alpha_2} \;+\; \alpha_2(W) X_{\alpha_2} \;\in\; \hhh,
\]
which implies $V_{\alpha_2} \in \hhh$, and finally
\[
[X - aX_{-\alpha_1}, V_{\alpha_2}] \;=\; -bX_{-\alpha_2} \;\in\; \hhh.
\]
Thus, in either case, $X_{-\alpha_1}, X_{-\alpha_2} \in \hhh$.  
If instead $a=0$ and $b \neq 0$, then
\[
[X, X_{\alpha_2}] \;=\; bV_{\alpha_2} \;+\; \alpha_2(W) X_{\alpha_2} \;\in\; \hhh,
\]
and we proceed analogously to conclude that $X_{-\alpha_2} \in \hhh$.

\smallskip

\textbf{$\bullet$ Case $l=2$.}  
Up to rescaling, we may assume that $c=1$, so that 
\[
X \;=\; \sum_{j \neq i} X_j \;+\; aX_{-\alpha_1} \;+\; bX_{-\alpha_2} \;+\; X_{-\alpha_1-\alpha_2} \;+\; W.
\]
By the result of Case~$l=1$, it suffices to show that $X_{-\alpha_1-\alpha_2} \in \hhh$.  
We first take brackets with $X_{\alpha_2}$, and then with $X_{\alpha_1}$:
\[
\hhh \ni [[X, X_{\alpha_2}], X_{\alpha_1}]
\;=\; [\, bV_{\alpha_2} - X_{-\alpha_1} \;+\; \alpha_2(W) X_{\alpha_2}, \; X_{\alpha_1}]
\;=\; -bX_{\alpha_1} - V_{\alpha_1} - \alpha_2(W)X_{\alpha_1+\alpha_2}.
\]
Since $\Lie(U) \subset \hhh$ by assumption, it follows that $V_{\alpha_1} \in \hhh$.  
Now consider
\[
\hhh \ni Y \;:=\; [X, V_{\alpha_1}]
\;=\; -aX_{-\alpha_1} \;+\; (\alpha_1+\alpha_2)(V_{\alpha_1})\, X_{-\alpha_1-\alpha_2}
\;=\; -aX_{-\alpha_1} \;-\; X_{-\alpha_1-\alpha_2}.
\]
If $a=0$, then $Y = -X_{-\alpha_1-\alpha_2} \in \hhh$, and we are done.  
If instead $a \neq 0$, then
\[
[Y, X_{\alpha_2}] \;=\; X_{-\alpha_1} \;\in\; \hhh,
\]
which implies
\[
X_{-\alpha_1-\alpha_2} \;=\; -Y - aX_{-\alpha_1} \;\in\; \hhh.
\]
Thus in either case, $X_{-\alpha_1-\alpha_2} \in \hhh$, as required.

\smallskip

\textbf{$\bullet$ Case $l=3$.} Up to rescaling, let us assume $d=1$ and consider
\[
X = \sum_{j \neq i} X_j + aX_{-\alpha_1} + bX_{-\alpha_2} + cX_{-\alpha_1-\alpha_2} + X_{-2\alpha_1-\alpha_2} + W.
\]
Thanks to the previous cases, it suffices to show that $X_{-2\alpha_1-\alpha_2} \in \hhh$.  
From now on, set 
\[
Z := [X_{-\alpha_1-\alpha_2},\,X_{\alpha_1+\alpha_2}] \in \Lie(T).
\]  
Taking successive brackets of $X$ first with $X_{\alpha_1+\alpha_2}$ and then with $X_{\alpha_1}$, we obtain
\begin{align*}
\hhh &\ni [X,X_{\alpha_1+\alpha_2}]= -bX_{\alpha_1} + cZ + X_{-\alpha_1} + (\alpha_1+\alpha_2)(W)X_{\alpha_1+\alpha_2},\ \text{and so} \\ 
\hhh &\ni  [cZ + X_{-\alpha_1}, X_{\alpha_1}]=c[Z,X_{\alpha_1}]+V_{\alpha_1}=c\alpha_1(Z)X_{\alpha_1}+V_{\alpha_1}.
\end{align*}
We deduce that $V_{\alpha_1} \in \hhh$. 
Consequently, 
\begin{align*}
\hhh \ni Y' &:= X + [V_{\alpha_1},X] \\
&= X +aX_{-\alpha_1} +c X_{-\alpha_1-\alpha_2} -X_{-2\alpha_1-\alpha_2} \\
&= \sum_{j \neq i} X_j - aX_{-\alpha_1} + bX_{-\alpha_2} -cX_{-\alpha_1-\alpha_2} + W.
\end{align*} 
Now, applying to the vector $Y'$ one of the previous steps (where the length is at most two), we conclude that 
\[
aX_{-\alpha_1},\quad bX_{-\alpha_2},\quad cX_{-\alpha_1-\alpha_2} \;\in\; \hhh.
\]
Finally,  
\[
 X_{-2\alpha_1-\alpha_2}=[\,X - aX_{-\alpha_1} - bX_{-\alpha_2} - cX_{-\alpha_1-\alpha_2},\, V_{\alpha_1}]  \in \hhh,
\]
which completes the proof.

	\smallskip

\textbf{$\bullet$ Case $l=5$.} Up to rescaling, let us assume $f=1$. In this case, we claim that 
\[
\mathfrak h \cap \Lie(G_{(i)}) = \Lie(G_{(i)}).
\]
First, we have
\[
\hhh \ni  [X, X_{\alpha_1+\alpha_2}] 
= -bX_{\alpha_1} + cZ + dX_{-\alpha_1} - X_{-2\alpha_1-\alpha_2} 
   + (\alpha_1+\alpha_2)(W)\, X_{\alpha_1+\alpha_2},
\]
which, by the argument in the case $l=3$, implies that $X_{-2\alpha_1-\alpha_2} \in \mathfrak h$.  
Next,
\[
\hhh \ni X' := [cZ+dX_{-\alpha_1}-X_{-2\alpha_1-\alpha_2},\, X_{\alpha_1}] 
= c\,\alpha_1(Z)\, X_{\alpha_1} + d V_{\alpha_1} + X_{-\alpha_1-\alpha_2},
\]
which, using the proof for $l=2$, implies that $X_{-\alpha_1-\alpha_2} \in \mathfrak h$.  
Furthermore,
\[
\hhh \ni [X', X_{\alpha_2}] = c\,\alpha_1(Z)\, X_{\alpha_1+\alpha_2} - X_{-\alpha_1},
\]
so $X_{-\alpha_1} \in \mathfrak h$.  
We are therefore left with
\[
\hhh \ni X'' := \sum_{j \neq i} X_j + bX_{-\alpha_2}+ e X_{-3\alpha_1-\alpha_2}+X_{-3\alpha_1-2\alpha_2} + W.
\]
Then,
\begin{align*}
\hhh &\ni [X'',X_{3\alpha_1+2 \alpha_2}] 
      = b X_{3\alpha_1+ \alpha_2} -e X_{\alpha_2}+[X_{-3\alpha_1-2 \alpha_2},X_{3\alpha_1+2 \alpha_2}]-\alpha_2(W)X_{3\alpha_1+2\alpha_2},\ \text{and so} \\
\hhh &\ni Z':=[X_{-3\alpha_1-2 \alpha_2},X_{3\alpha_1+2 \alpha_2}]
      = -V_{\alpha_1}+V_{\alpha_2},
\end{align*}
which yields $V_{\alpha_2} \in \hhh$. Moreover,
\[
\hhh \ni [X'',X_{\alpha_2}] = bV_{\alpha_2} + X_{-3\alpha_1-\alpha_2} +\alpha_2(W) X_{\alpha_2},
\]
which implies that $X_{-3\alpha_1-\alpha_2} \in \hhh$.  
Next,
\[
\hhh \ni [X''-eX_{-3\alpha_1-\alpha_2}, X_{3\alpha_1+\alpha_2}] 
= - X_{-\alpha_2} + \alpha_2(W)\,X_{3\alpha_1+\alpha_2},
\]
so $X_{-\alpha_2} \in \hhh$. Finally,
\[
\hhh \ni [X_{-3\alpha_1-\alpha_2},X_{-\alpha_2}] =  X_{-3\alpha_1-2\alpha_2},
\]
and thus we conclude that $\hhh = \Lie(G_{(i)})$, as claimed.

\smallskip

	\textbf{$\bullet$ Case $l=4$.} Up to rescaling, let us assume that $e = 1$ (and $f=0$ since $l=4$) and consider
\[
X = \sum_{j \neq i} X_j + a X_{-\alpha_1} + b X_{-\alpha_2} + c X_{-\alpha_1-\alpha_2} + d X_{-2\alpha_1-\alpha_2} + X_{-3\alpha_1-\alpha_2}+W.
\]
We aim to show that either $\hhh$ is $T$-stable, or there exists $\lambda \in \K^\times$ such that $\hhh \cap \Lie(G_{(i)}) = \hhh_\lambda$.

Assume first that $d \neq 0$. Then
\begin{align*}
\hhh &\ni [X, X_{\alpha_1+\alpha_2}]=- b X_{\alpha_1} + c Z + d X_{-\alpha_1} + (\alpha_1+\alpha_2)(W) X_{\alpha_1+\alpha_2},\ \text{and so} \\ 
\hhh &\ni  [c Z + d X_{-\alpha_1} , X_{\alpha_1}]=c\alpha_1(Z)X_{\alpha_1}+d V_{\alpha_1},
\end{align*}
which implies $V_{\alpha_1} \in \hhh$ (since $d \neq 0$). Consequently, 
\[
\hhh \ni [V_{\alpha_1}, X] = a X_{-\alpha_1} + c X_{-\alpha_1-\alpha_2} - d X_{-2\alpha_1-\alpha_2}. 
\] 
By the argument in the case $l=3$, we can conclude that $a X_{-\alpha_1}, c X_{-\alpha_1-\alpha_2}, X_{-2\alpha_1-\alpha_2} \in \hhh$. Moreover, 
\begin{align*}
&\hhh \ni X_{-\alpha_1}=[X_{-2\alpha_1-\alpha_2},X_{\alpha_1+\alpha_2}], \text{ and}\\
&\hhh \ni X_{-\alpha_1-\alpha_2}=[X_{\alpha_1},X_{-2\alpha_1-\alpha_2}].
\end{align*}
We are therefore left with
\[
\hhh \ni Y' := \sum_{j \neq i} X_j + bX_{-\alpha_2}+  X_{-3\alpha_1-\alpha_2} + W.
\]
Then,
\begin{align*}
&\hhh \ni Z'':=[Y',X_{3\alpha_1+ \alpha_2}]=[X_{-3\alpha_1- \alpha_2},X_{3\alpha_1+ \alpha_2}]+\alpha_2(W)X_{3 \alpha_1+\alpha_2},\ \text{and so}\\
&\hhh \ni  [X_{-3\alpha_1- \alpha_2},X_{3\alpha_1+ \alpha_2}]=-V_{\alpha_1}-V_{\alpha_2},
\end{align*}
which yields $V_{\alpha_2} \in \hhh$. In particular, $\hhh \cap \Lie(G_{(i)})$ contains $\Lie(T_{(i)})$. 
It follows that
\[
\hhh \ni [V_{\alpha_2},Y']=bX_{-\alpha_2}+  X_{-3\alpha_1-\alpha_2}.
\]
If $b=0$, then we are done. 
Otherwise, setting $\lambda = b \in \K^\times$, we obtain that 
\[X_\lambda:=\lambda X_{-\alpha_2} + X_{-3\alpha_1-\alpha_2} \in \hhh.\] 
(Notice that $X_\lambda$ is exactly the vector considered in \Cref{ex: hb}.)
We deduce that $\hhh \cap \Lie(G_{(i)})$ contains $\hhh_\lambda$.
If the inclusion is strict, then either $\hhh$ contains a length-five vector, which implies (by the case $l=5$) that $\hhh \cap \Lie(G_{(i)})= \Lie(G_{(i)})$, or
\[
\K X_{-\alpha_2} \oplus \K X_{-3\alpha_1-\alpha_2} \subset \hhh.
\]
In this last case, taking the bracket of $X_{-\alpha_2}$ and $X_{-3\alpha_1-\alpha_2}$ again yields $\hhh \cap \Lie(G_{(i)})= \Lie(G_{(i)})$.

Now, returning to the beginning, assume instead that $d=0$.  
Taking the bracket of $X$ with $X_{2\alpha_1+\alpha_2}$ again shows that $X_{-\alpha_1}$, 
and hence $V_{\alpha_1}$, lie in $\hhh$.  
Proceeding as above, we deduce that $\hhh$ also contains $X_{-2\alpha_1-\alpha_2}$ and $X_{-\alpha_1-\alpha_2}$, 
and the remainder of the proof is analogous to the argument given above (when $Y'$ appears).  
This completes the proof of the proposition.
\end{proof}

\section{Strongly horospherical subgroups and classification for \texorpdfstring{$p \geq 3$}{p>2}} \label{sec: strongly horo subgroups}
Let $G$ be a connected reductive group. 
In this section, we establish some basic properties of strongly horospherical subgroups of $G$, and then provide a complete classification in the case $p \geq 3$ (Corollary~\ref{cor: classification strongly horo}).

\subsection{Structure of the quotient group}
Let $H$ be a strongly horospherical subgroup of $G$.  
Over a field of characteristic $0$, the quotient group $N_G(H)/H$ is always a torus. We will see that this is no longer necessarily the case over a field of characteristic $2$.

\begin{lemma}
\label{lem heightr}
    Let $H$ be a strongly horospherical subgroup of $G$ with normalizer $P$. Let $\gamma$ be a positive root of $G$. {Let $R$ be a $\mathbb{K}$-algebra, $r \in \N \cup \{\infty\}$ and consider some $x \in \boldsymbol{\alpha}_{p^r}(R)$ such that that $u_{-\gamma} (x) \in P(R)$.
    Then we have that $u_{-\gamma}(x^2) \in H(R)$.}
    \end{lemma}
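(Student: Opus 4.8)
The plan is to work inside the rank-one subgroup $G_\gamma \simeq \SL_2$ generated by $U_\gamma$ and $U_{-\gamma}$, exactly as in the proof of Corollary~\ref{cor: normalizerthm}. Since $H$ is strongly horospherical, it contains $U$, hence it contains $U_\gamma$; and by hypothesis $u_{-\gamma}(x)$ lies in the normalizer $P = N_G(H)$ for every $x$ with $x^{p^r}=0$. The key point is that, since $u_{-\gamma}(x) \in P$ normalizes $H$ and $U_\gamma \subset H$, the conjugate $u_{-\gamma}(x)\, U_\gamma\, u_{-\gamma}(-x)$ must lie in $H$. So first I would compute this conjugate explicitly in $\SL_2$: writing the elements on the functor of points, for $x$ with $x^{p^r}=0$ and $t$ arbitrary,
\[
\begin{bmatrix} 1 & 0 \\ x & 1 \end{bmatrix}
\begin{bmatrix} 1 & t \\ 0 & 1 \end{bmatrix}
\begin{bmatrix} 1 & 0 \\ -x & 1 \end{bmatrix}
=
\begin{bmatrix} 1 - xt & t \\ -x^2 t & 1 + xt \end{bmatrix}.
\]
This element lies in $H$ for all $t$ (in the $\SL_2$-copy, with $u_{-\gamma}$ corresponding to the lower-triangular parameter, $u_\gamma$ to the upper-triangular one).

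Next I would extract $u_{-\gamma}(x^2)$ from such conjugates. The matrix above is a product of a lower-triangular unipotent, a torus element, and an upper-triangular unipotent; more precisely, when $1-xt$ is invertible it has the LU-type decomposition
\[
\begin{bmatrix} 1 - xt & t \\ -x^2 t & 1 + xt \end{bmatrix}
=
\begin{bmatrix} 1 & 0 \\ \tfrac{-x^2 t}{1-xt} & 1 \end{bmatrix}
\begin{bmatrix} 1-xt & 0 \\ 0 & (1-xt)^{-1} \end{bmatrix}
\begin{bmatrix} 1 & \tfrac{t}{1-xt} \\ 0 & 1 \end{bmatrix}.
\]
Here $1-xt$ is invertible whenever $xt$ is nilpotent, which holds for instance after a faithfully flat base change making $x$ a nilpotent with $x^{p^r}=0$ and choosing $t$ in the nilradical, or simply by specializing $t$ appropriately; and $(1-xt)^{-1} = 1 + xt + (xt)^2 + \cdots$ is a polynomial since $xt$ is nilpotent. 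The upshot is that, because $H$ already contains $U_\gamma = U$ (upper-triangular part) and the conjugate lies in $H$, the "lower-triangular part" $u_{-\gamma}\!\bigl(\tfrac{-x^2 t}{1-xt}\bigr)$ together with the torus factor must also be accounted for inside $H$; choosing $t$ so that $\tfrac{-x^2 t}{1-xt} = x^2$ — e.g. on the functor of points one checks $t = -1$ works when $1+x$ is invertible, which it is since $x$ is nilpotent, giving $\tfrac{-x^2(-1)}{1+x} = \tfrac{x^2}{1+x}$, and then one absorbs the extra $\tfrac{1}{1+x} = 1 - x + x^2 - \cdots$ by noting $x^2(1+x)^{-1} = x^2 - x^3 + \cdots$ and iterating — one concludes $u_{-\gamma}(x^2) \in H$.

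The main obstacle I anticipate is making the extraction step rigorous at the level of group schemes rather than just on $\overline{\K}$-points: the parameters $x$ and $t$ live over arbitrary test rings, $H$ need not be reduced, and the decomposition above involves the non-polynomial $(1-xt)^{-1}$. The clean way around this is to avoid division altogether: instead of using the full conjugation, pick the specific test element, observe that the product
\[
u_\gamma(s)^{-1} \cdot \Bigl( u_{-\gamma}(x)\, u_\gamma(t)\, u_{-\gamma}(-x) \Bigr) \cdot (\text{torus correction})
\]
can be arranged, by a polynomial choice of $s$ and the torus factor in terms of $x,t$, to equal $u_{-\gamma}(x^2)$ when $t$ is chosen as a suitable unit times $x^2$ — all identities then being polynomial identities in $\SL_2$ valid over every test ring, using only that $U_\gamma \subset H$, that $P \supset H$ normalizes $H$, and that $u_{-\gamma}(x) \in P$. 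I would phrase the final computation as a single matrix identity in $\SL_2(R)$ for an arbitrary $\K$-algebra $R$ and $x \in R$ with $x^{p^r} = 0$, exhibiting $u_{-\gamma}(x^2)$ as a product of $H$-elements and $P$-conjugates of $H$-elements, which gives the claim.
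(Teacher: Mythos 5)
Your opening computation is correct and is in fact half of the paper's: conjugating $u_\gamma(t)\in U\subset H$ by $u_{-\gamma}(x)\in P=N_G(H)$ does land in $H$. The gap is in the extraction step. From $M(t)=\bigl[\begin{smallmatrix}1-xt & t\\ -x^2t & 1+xt\end{smallmatrix}\bigr]\in H$ you may legitimately strip off the upper-unipotent factor (it lies in $U_\gamma\subset H$), leaving $u_{-\gamma}\!\bigl(-x^2t(1-xt)^{-1}\bigr)\cdot\mathrm{diag}\bigl(1-xt,(1-xt)^{-1}\bigr)\in H$; but to isolate the $u_{-\gamma}$ factor you would need the diagonal factor (your ``torus correction'') to lie in $H$, and nothing in the hypotheses gives this. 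Indeed $H\cap T$ can be much smaller than $P\cap T$: for $H=\boldsymbol{\mu}_n\ltimes U$ one has $H\cap T=\boldsymbol{\mu}_n$, which contains no nontrivial element of the form $\mathrm{diag}(1+\varepsilon,(1+\varepsilon)^{-1})$ with $\varepsilon$ nilpotent when $n$ is prime to $p$; and in the paper's characteristic-$2$ example $\mathcal{A}(2,\infty)$ one has $P\cap T=T$ but $H\cap T=\boldsymbol{\mu}_2$. So the torus factor genuinely need not be in $H$, and the final ``absorb and iterate'' step does not close. A secondary problem is the case $r=\infty$, where $x$ is not nilpotent and $(1-xt)^{-1}$ does not exist at all; the proposed fixes (base change, choosing $t$ nilpotent) do not repair this.

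The paper's proof sidesteps both issues by enlarging the conjugator rather than decomposing the result: since $P$ is parabolic it contains $T$, so the whole family $g_t=\bigl[\begin{smallmatrix}t & 0\\ x & t^{-1}\end{smallmatrix}\bigr]$ lies in $P$, and conjugating $u_\gamma(1)\in H$ by $g_t$ gives $\bigl[\begin{smallmatrix}1-tx & t^2\\ -x^2 & 1+tx\end{smallmatrix}\bigr]\in H$ for all $t\in\Gm$. All entries are polynomial in $t$ (no division anywhere, so $r=\infty$ is covered), and the lower-left entry is already $-x^2$ independently of $t$; extending the morphism $\Gm\to H$ to $\mathbf{A}^1$ and evaluating at $t=0$ lands in the closed subscheme $H$ and yields $u_{-\gamma}(-x^2)\in H$ directly, with no factorization and no torus element of $H$ required. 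If you want to repair your argument, you must bring the torus action on the conjugating side (this is precisely where ``strongly'' horospherical is used) and replace the LU-extraction by such a specialization/limit argument.
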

    
\begin{proof}
    Since the derived subgroup of $G$ is simply-connected, it is enough to work inside of the copy of $\SL_2$ generated by $U_\gamma$ and $U_{-\gamma}$, so we can assume that $G=\SL_2$ and that $\gamma$ is the only positive root. {Let $t \in R^\times=\Gm(R)$, then by the assumption that $u_{-\gamma}(x) \in P(R)$, we have that
    \[
     H(R) \ni \begin{bmatrix}
        t & 0 \\
        x & t^{-1}
    \end{bmatrix} 
    \begin{bmatrix}
        1 & 1 \\
        0 & 1 
    \end{bmatrix}
    \begin{bmatrix}
        t^{-1} & 0\\
        -x & t
    \end{bmatrix} = \begin{bmatrix} 1-tx & t^2\\ -x^2 & 1+tx\end{bmatrix}.
    \]
    Since this is true for arbitrary $t$ (as $H$ is assumed to be \emph{strongly} horospherical), we can take the limit for $t \to 0$. (More specifically, the homomorphism $\Gm \to H$ sending $t$ to the above matrix can be extended to a scheme morphism $\mathbf{A}^1 \to H$, the value at $0$ is what we call its limit: for more details, see \cite[Section 13.b]{Milne}.) This yields that $u_{-\gamma} (-x^2) \in H(R)$ and concludes the proof.}
\end{proof}

\begin{lemma}
\label{lem heights}
    Let $H$ be a strongly horospherical subgroup of $G$ with normalizer $P$, and let $\gamma$ be a positive root of $G$. Then
\[
\left\{
    \begin{array}{ll}
       \height(H \cap U_{-\gamma}) \leq \height(P \cap U_{-\gamma}) \leq \height(H \cap U_{-\gamma})+1 & \text{ if } p =2; \\
       H \cap U_{-\gamma} = P \cap U_{-\gamma} & \text{ if } p \geq 3.
    \end{array}
\right.    
\]    
\end{lemma}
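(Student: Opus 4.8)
The plan is to reduce everything to the copy of $\SL_2$ generated by $U_{\pm\gamma}$ and to combine Lemma~\ref{lem heightr} with the classification of infinitesimal subgroup schemes of $\Ga$.

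First I would record two preliminary observations. Since $H$ is strongly horospherical, $P := N_G(H)$ is a parabolic subgroup of $G$; as $P \supseteq H \supseteq U$ and $U$ is a maximal smooth connected unipotent subgroup of $G$, the reduced parabolic $P_{\red}$ contains a Borel subgroup whose unipotent radical is conjugate, \emph{inside} $P_{\red}$, to $U$ — so $B \subseteq P_{\red} \subseteq P$. Consequently, by \eqref{product}, $P \cap U_{-\gamma} = u_{-\gamma}(\boldsymbol{\alpha}_{p^r})$ with $r := \height(P \cap U_{-\gamma}) \in \N \cup \{\infty\}$. Secondly, $H \subseteq N_G(H) = P$, so $A := H \cap U_{-\gamma}$ is a closed subgroup scheme of $P \cap U_{-\gamma} = u_{-\gamma}(\boldsymbol{\alpha}_{p^r})$; since the latter is infinitesimal (or equal to $U_{-\gamma}$ when $r = \infty$), $A$ is an infinitesimal, hence connected, subgroup scheme of $U_{-\gamma} \cong \Ga$. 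By the classification of such subgroup schemes, $A = u_{-\gamma}(\boldsymbol{\alpha}_{p^s})$ for a unique $s \leq r$, and $s = \height(A) = \height(H \cap U_{-\gamma})$ (when $r = \infty$ one also allows $s = \infty$, i.e.\ $A = U_{-\gamma}$).

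The key step is then to apply Lemma~\ref{lem heightr} with this value of $r$ — which is legitimate because $u_{-\gamma}(\boldsymbol{\alpha}_{p^r}) = P \cap U_{-\gamma} \subseteq P$ — obtaining $u_{-\gamma}(x^2) \in H \cap U_{-\gamma} = u_{-\gamma}(\boldsymbol{\alpha}_{p^s})$ for every $x \in \boldsymbol{\alpha}_{p^r}$ on the functor of points. Evaluating at the $\K$-algebra $R = \K[x]/(x^{p^r})$ (or $R = \K[x]$ if $r = \infty$), with $x$ the tautological nilpotent element, this says precisely that $x^{2p^s} = 0$ in $R$, that is $2p^s \geq p^r$ (with the convention that this forces $s = \infty$ when $r = \infty$). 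Finally I would combine $s \leq r$ with $2p^s \geq p^r$. For $p = 2$ the inequality reads $2^{s+1} \geq 2^r$, i.e.\ $s \geq r-1$, hence $s \in \{r-1, r\}$, which is exactly $\height(H \cap U_{-\gamma}) \leq \height(P \cap U_{-\gamma}) \leq \height(H \cap U_{-\gamma}) + 1$. For $p \geq 3$, if $s \leq r-1$ then $2p^s \leq 2p^{r-1} < p^r$ since $p > 2$, a contradiction; thus $s = r$ and therefore $H \cap U_{-\gamma} = u_{-\gamma}(\boldsymbol{\alpha}_{p^r}) = P \cap U_{-\gamma}$.

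I do not expect a serious obstacle here: once Lemma~\ref{lem heightr} is available the argument is essentially an inequality of exponents. The two points requiring a little care are the reduction ``$P \supseteq U \Rightarrow P \supseteq B$'' (needed so that $P \cap U_{-\gamma}$ has the standard form $u_{-\gamma}(\boldsymbol{\alpha}_{p^r})$, and hence a well-understood height) and the appeal to the classification of infinitesimal subgroup schemes of $\Ga$ (which rules out $H \cap U_{-\gamma}$ being something more exotic than a $u_{-\gamma}(\boldsymbol{\alpha}_{p^s})$); both are standard facts. The only spot where a reader must pause is the uniform treatment of $r = \infty$, where one works over $\K[x]$ and the relation $2p^s \geq p^r$ simply degenerates to $s = \infty$, giving $H \cap U_{-\gamma} = P \cap U_{-\gamma} = U_{-\gamma}$ in both characteristics.
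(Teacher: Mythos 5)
Your proof is correct and takes essentially the same route as the paper's: both reduce to applying Lemma~\ref{lem heightr} to a point of $P\cap U_{-\gamma}=u_{-\gamma}(\boldsymbol{\alpha}_{p^r})$ and then reading off that the membership $u_{-\gamma}(x^2)\in H$ forces $2p^s\geq p^r$, which gives $s\geq r-1$ for $p=2$ and $s=r$ for $p\geq 3$. Your version merely spells out the step $H\cap U_{-\gamma}=u_{-\gamma}(\boldsymbol{\alpha}_{p^s})$ and the evaluation on the tautological nilpotent, which the paper leaves implicit.
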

 
\begin{proof}
    The inequality $\height(H \cap U_{-\gamma}) \leq \height(P \cap U_{-\gamma})$ is clear. {  
To prove the reverse inequality, let $r \in \N \cup \{\infty\}$ denote the height of $P \cap U_{-\gamma}$.
Set $R=\mathbb{K}[y]/(y^{p^r})$, and let
$x \in \boldsymbol{\alpha}_{p^r}(R)\simeq (P \cap U_{-\gamma})(R)$.
Since $u_{-\gamma}(x)\in (P \cap U_{-\gamma})(R)$, \Cref{lem heightr} implies that
$u_{-\gamma}(x^2)\in (H \cap U_{-\gamma})(R)$.
Suppose that the height of $H \cap U_{-\gamma}$ is strictly smaller than $r$. Then
\[
(x^2)^{p^{r-1}}=x^{2p^r}=0 \in \boldsymbol{\alpha}_{p^r}(R).
\]
In particular, $y^{2p^{r-1}}=0$ in $R$.
If $p\ge 3$, this is impossible, since the nilpotence index of $y$ in $R$ is $p^r$.
Therefore $H \cap U_{-\gamma}$ also has height $r$.
If $p=2$, the above argument only yields $\height(P \cap U_{-\gamma})
\le \height(H \cap U_{-\gamma})+1$.    }
\end{proof}

\begin{remark}\label{rk: P=HT for strongly horo}
Assume that $p \geq 3$.  
Let $H$ be a strongly horospherical subgroup of $G$.  
By Lemma~\ref{lem heights}, we have $P = N_G(H) = HT$.  
In particular, if $H$ is smooth, then $P$ is smooth as well.  
\end{remark}

\begin{proposition}\label{prop P/H is a torus}
Let $H$ be a strongly horospherical subgroup of $G$ with normalizer $P$.
The following hold:
\begin{enumerate}
\item\label{item 1 prop torus} If $p \geq 3$, then $\T:=P/H$ is a torus.
\item\label{item 2 prop torus} If $p=2$, then $P/H \simeq \T \ltimes V$, where $\T:=(P/H)_{\red}$ is a torus and $V$ is an infinitesimal  unipotent group of height at most $1$.
\end{enumerate}
\end{proposition}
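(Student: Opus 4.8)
The plan is to treat both items at once by writing $P/H$ as an internal semidirect product of its reduced part and the image of the infinitesimal subgroup $U_P^-$; the second factor collapses as soon as $p\geq 3$.

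\textbf{Step 1: the reduced part is a torus.} For \ref{item 1 prop torus} the quickest route is Remark~\ref{rk: P=HT for strongly horo}: there $P=HT$, so $P/H\simeq T/(T\cap H)$ by the isomorphism theorems for group schemes; as $T\cap H$ is a closed subgroup scheme of the torus $T$, this quotient is diagonalizable with character group $\{\chi\in X^\ast(T):\chi|_{T\cap H}=1\}$, a subgroup of the free abelian group $X^\ast(T)$, hence free, so $T/(T\cap H)$ is a torus. In general, set $Q:=HT$; since $T\subset P=N_G(H)$ normalizes $H$, this is a subgroup scheme containing $B$, with $H\triangleleft Q$, and $Q/H\simeq T/(T\cap H)=:\T$ is a torus exactly as above. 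I then claim $Q_{\red}=P_{\red}$: if $\gamma\in\Phi^+$ has $-\gamma\in\Phi_I$ (the root system of the Levi of $P_{\red}$), then $P\cap U_{-\gamma}=U_{-\gamma}$ has infinite height, so Lemma~\ref{lem heights} forces $H\cap U_{-\gamma}=U_{-\gamma}$, i.e. $U_{-\gamma}\subset H\subset Q$; together with $B\subset Q$ this gives $P_{\red}=\langle B, U_{-\gamma}:-\gamma\in\Phi_I\rangle\subset Q$, hence $Q_{\red}=P_{\red}$. Since $U_P^-$ is infinitesimal, $\dim Q=\dim Q_{\red}=\dim P$, so $\dim\T=\dim Q-\dim H=\dim(P/H)$; being a smooth connected subgroup of $(P/H)_{\red}$ of this dimension, $\T=(P/H)_{\red}$.

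\textbf{Step 2: the complement.} Let $V:=U_P^-H/H\subset P/H$; this is a subgroup scheme since $U_P^-H$ is a subgroup of $P$ (as $H\triangleleft P$), and it is a quotient of the infinitesimal unipotent group $U_P^-$, hence infinitesimal unipotent. For the height, write $U_P^-=\prod_\gamma(P\cap U_{-\gamma})$; infinitesimality of $U_P^-$ makes each $r_\gamma:=\height(P\cap U_{-\gamma})$ finite, and Lemma~\ref{lem heightr} (with $p=2$, so $x\mapsto x^2$ is the Frobenius) gives $u_{-\gamma}(x^2)\in H$ for $x\in\boldsymbol{\alpha}_{p^{r_\gamma}}$; applied factor by factor this says the relative Frobenius of $U_P^-$ lands in $U_P^-\cap H$, so it is trivial on $V$ and $V$ has height at most $1$. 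Finally $\T=(P/H)_{\red}$ is normal in $P/H$; $\T\cap V=\{1\}$ as $\T$ is reduced and $V$ infinitesimal; and $V\cdot\T=P/H$ because $U_P^-\cdot Q=P$ (indeed $P_{\red}\subset Q$ and $U_P^-\cdot P_{\red}=P$ by \eqref{product}). Hence $P/H=\T\rtimes V$, the claimed decomposition $\T\ltimes V$ with $\T$ a torus and $V$ infinitesimal unipotent of height at most $1$, proving \ref{item 2 prop torus}. When $p\geq 3$, Lemma~\ref{lem heights} gives $H\cap U_{-\gamma}=P\cap U_{-\gamma}$ for every $\gamma$, so $U_P^-\subset H$, $V$ is trivial, and one recovers \ref{item 1 prop torus} this way as well.

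\textbf{Main obstacle.} The delicate points should be the identification $Q_{\red}=P_{\red}$---which relies on the precise, asymmetric height bound of Lemma~\ref{lem heights} and on $U_P^-$ being infinitesimal---and the verification that $V$ has height at most $1$, where one must check that the scheme factorization $U_P^-\simeq\prod_\gamma(P\cap U_{-\gamma})$ is compatible with the relative Frobenius so that Lemma~\ref{lem heightr} may be applied componentwise. The remaining steps---isomorphism theorems for group schemes, and the interaction of reduced with infinitesimal subgroups under intersection and dimension count---are routine.
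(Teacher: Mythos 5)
Your overall strategy---realizing the reduced part of $P/H$ as $HT/H\simeq T/(T\cap H)$ and the infinitesimal part as the image of $U_P^-$, with Lemma~\ref{lem heightr} controlling the height---is sound and is essentially an expanded version of the paper's (much terser) proof, which likewise rests on Lemmas~\ref{lem heightr} and~\ref{lem heights}. Step~1 is correct as written. However, Step~2 contains a genuine error at the point where you assemble the semidirect product: you assert that $\T=(P/H)_{\red}$ is normal in $P/H$. The reduced part of a group scheme is in general \emph{not} a normal subgroup, and your claim already fails in the paper's own Example~\ref{ex: not always a torus when p=2}: there $P/H\simeq \Gm\ltimes\boldsymbol{\alpha}_2$ with $\Gm$ acting nontrivially on $\boldsymbol{\alpha}_2$, so conjugating an element of the reduced part $\Gm$ by an element of $\boldsymbol{\alpha}_2$ picks up a nontrivial $\boldsymbol{\alpha}_2$-component, and $\Gm$ is not normal. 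In the decomposition $\T\ltimes V$ of the statement it is $V$ that must be the normal factor, and you never verify this. It is true and not hard: $T$ normalizes $U_P^-$ (each factor $P\cap U_{-\gamma}$ is $T$-stable) and normalizes $H$ (since $T\subset P=N_G(H)$), hence normalizes $U_P^-H$; combined with $P=U_P^-P_{\red}\subset (U_P^-H)\cdot T$, which follows from your identification $P_{\red}=Q_{\red}\subset HT$, this shows $U_P^-H$ is normal in $P$, whence $V$ is normal in $P/H$.

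A second, smaller lapse in the same step: the inference ``$\T\cap V=\{1\}$ as $\T$ is reduced and $V$ infinitesimal'' is not valid. A torus contains nontrivial infinitesimal subgroup schemes (e.g.\ $\boldsymbol{\mu}_p\subset\Gm$), and a closed subscheme of a reduced scheme need not be reduced, so neither hypothesis forces the scheme-theoretic intersection to be trivial. The correct reason is that $\T\cap V$ is simultaneously of multiplicative type (a subgroup scheme of a torus) and unipotent (a subgroup scheme of $V$), hence trivial. With these two repairs---normality of $V$ rather than of $\T$, and the multiplicative-versus-unipotent argument for the trivial intersection---the rest of your argument (the identification $Q_{\red}=P_{\red}$ via Lemma~\ref{lem heights}, and the factorwise application of Lemma~\ref{lem heightr} showing $V$ has height at most one) goes through.
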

 
\begin{proof}
\ref{item 1 prop torus}: Assume first that $p \geq 3$. Then $P/H$ is a torus since $H \cap U=P \cap U=U$ and $H \cap U_{-}=P \cap U_{-}$ by Lemma~\ref{lem heights}.\\
\ref{item 2 prop torus}: Assume now that $p=2$. Then, again by Lemma~\ref{lem heights}, $\T:=(P/H)_{\red}$ is a torus and there is a split exact sequence
\[
1 \to V \to P/H \to \T \to 1,
\]
where $V$ is an infinitesimal  unipotent group of height at most $1$.
\end{proof}

\begin{remark}
Let $H$ be a strongly horospherical subgroup with normalizer $P$.
If $P$ is smooth, then $P/H$ is smooth. In particular, $P/H$ is a torus. 
\end{remark}

\begin{example}\label{ex: not always a torus when p=2}
Assume $p=2$. Let $G=\SL_2$ and \[
H \defeq \left\{
    \begin{bmatrix}
        a & b \\ 0 & d
    \end{bmatrix} \ \middle|\   a^2=1
    \right\} \subset \SL_2.
 \ \text{(This is the subgroup $\mathcal{A}(2,\infty)$ in \cite[Lemma 3.1]{Knop}.)}\]
Then $H$ is a strongly horospherical subgroup of $G$, $P=N_G(H)=F^{-1}(B)$, and $\height(P \cap U_{-\gamma}) = \height(H \cap U_{-\gamma})+1=2$. 
In particular, $P/H \simeq \Gm \ltimes \alpha_2$ is not a torus.
\end{example}

\subsection{Classification for \texorpdfstring{$p \geq 3$}{p>2}} \label{sec: classification strongly horo}
By assuming that the characteristic of the base field $\K$ is not equal to $2$, we get a classification result for strongly horospherical subgroups of $G$ that is analogous to the well-known one in characteristic $0$: all such subgroups are obtained by taking kernels of characters on a parabolic subgroup of $G$.

\begin{proposition}\label{prop characterization strongly horo car p>2}
Assume that $p \geq 3$. 
\begin{enumerate}
\item \label{prop characterization strongly horo car p>2 item i} Let $H$ be a strongly horospherical subgroup of $G$ with normalizer $P$. 
Then $H=\bigcap_{\chi \in M} \ker(\chi_i)$, where $M$ is a sublattice of the character lattice $X^\ast(P)$.
\item \label{prop characterization strongly horo car p>2 item ii} Conversely, given a parabolic subgroup scheme $P$ of $G$ and a sublattice $M$ of the character lattice $X^\ast(P)$, the scheme-theoretic intersection $H:=\bigcap_{\chi \in M} \ker(\chi_i)$ is a strongly horospherical subgroup of $G$ such that $N_G(H)=P$.
\end{enumerate}
\end{proposition}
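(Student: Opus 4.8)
The plan is to prove the statement by reducing to the structure theory of the quotient torus $\T = P/H$ established in Proposition~\ref{prop P/H is a torus}. First I would record that, since $p \geq 3$, Proposition~\ref{prop P/H is a torus}\eqref{item 1 prop torus} gives that $\T := P/H$ is a torus, so we have a short exact sequence of group schemes $1 \to H \to P \xrightarrow{\pi} \T \to 1$. The key point is that $H$ is exactly the intersection of the kernels of all characters of $P$ that factor through $\pi$. Indeed, set $M := \pi^\ast(X^\ast(\T)) \subseteq X^\ast(P)$, which is a subgroup of $X^\ast(P)$; I claim $H = \bigcap_{\chi \in M}\ker\chi$. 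The inclusion $\subseteq$ is immediate since every such $\chi$ is trivial on $H$. For the reverse inclusion, observe that the characters of a torus $\T$ separate its points in the scheme-theoretic sense: the natural map $\T \to \prod_{\psi \in X^\ast(\T)} \Gm$ is a closed immersion (equivalently, $\K[\T]$ is generated as a $\K$-algebra by $X^\ast(\T)$), so $\bigcap_{\psi}\ker\psi$ is trivial in $\T$. Pulling back along $\pi$, the scheme-theoretic intersection $\bigcap_{\chi \in M}\ker\chi = \pi^{-1}\bigl(\bigcap_{\psi \in X^\ast(\T)}\ker\psi\bigr) = \pi^{-1}(\{e\}) = H$.

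It then remains to check that $M$ is a \emph{sublattice} of $X^\ast(P)$, i.e.\ that $\pi^\ast$ is injective, so that $M \cong X^\ast(\T)$ is free of finite rank. Injectivity of $\pi^\ast : X^\ast(\T) \to X^\ast(P)$ is a general fact for a faithfully flat quotient $P \twoheadrightarrow \T$: any character of $\T$ trivial on the image $\pi(P) = \T$ is trivial. Finally, $X^\ast(P)$ is itself free of finite rank (it is identified with $\Pic(G/P)$, cf.\ the discussion in Section~\ref{sec character lattice of parab subgroups}, and in any case the character group of any linear algebraic group, or group scheme of finite type, is free of finite rank), so $M$ is a genuine sublattice. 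This completes part~(1). The main subtlety here is purely scheme-theoretic: one must make sure the intersections are taken scheme-theoretically and that "characters separate points on a torus" is used in its scheme-theoretic form (faithful flatness / the coordinate ring being spanned by characters), since in characteristic $p$ non-reduced phenomena are exactly what we cannot ignore.

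**The plan for part (2).**

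Conversely, given a parabolic $P$ and a sublattice $M \subseteq X^\ast(P)$, set $H := \bigcap_{\chi \in M}\ker\chi$. First, $H$ is horospherical: since $U$ is a maximal unipotent smooth connected subgroup of $G$ contained in $P$, and every character of $P$ restricts trivially to $U$ (as $U$ has no nontrivial characters, being unipotent connected), we get $U \subseteq \ker\chi$ for every $\chi \in M$, hence $U \subseteq H$. So $H$ contains a maximal unipotent smooth connected subgroup of $G$. Next I must show $N_G(H) = P$. The inclusion $P \subseteq N_G(H)$ holds because $H$ is cut out inside $P$ by characters of $P$, which are conjugation-invariant functions on $P$ for the action of $P$ on itself by conjugation — more precisely, each $\ker\chi$ is a normal subgroup of $P$, so their intersection $H$ is normal in $P$. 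For the reverse inclusion $N_G(H) \subseteq P$: since $U \subseteq H \subseteq N_G(H)$ and $U$ is a maximal smooth connected unipotent subgroup, $N_G(H)$ is horospherical; I would argue that $N_G(H)$ is contained in a (the) parabolic $P$ by first passing to reduced parts, using $N_G(H)_{\red} \supseteq U$ and the classical fact that a smooth subgroup containing a maximal unipotent is contained in a unique conjugate of a parabolic, then bounding the infinitesimal part via Lemma~\ref{lem heights}. Actually the cleanest route is: $N_G(N_G(H))$ normalizes $H$, and once one knows $N_G(H)$ is a parabolic — which follows since $H$ is strongly horospherical by construction, a point we still owe — Corollary~\ref{cor: normalizerthm} gives $N_G(N_G(H)) = N_G(H)$; combined with $P \subseteq N_G(H)$ and the fact that both are parabolics containing $B$ with the same reduced part and, by Lemma~\ref{lem heights} applied with the roles of $P$ and $N_G(H)$, the same root-subgroup heights, Theorem~\ref{thm:numericalfunction} forces $N_G(H) = P$.

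I expect the main obstacle to be this last equality $N_G(H) = P$, specifically showing $N_G(H) \subseteq P$: one needs to rule out that imposing only the linear conditions coming from $M$ (rather than all of $X^\ast(P)$) enlarges the normalizer beyond $P$. The resolution is the rigidity in Lemma~\ref{lem heights}\,(the case $p \geq 3$): for every root $\gamma$, $H \cap U_{-\gamma} = P \cap U_{-\gamma}$ once we know $H$ is strongly horospherical, so $H$ and $P$ have the same associated numerical function on the negative roots; hence by Theorem~\ref{thm:numericalfunction} (applied to the parabolic $N_G(H)$, which contains $P$ and whose intersections with the $U_{-\gamma}$ are squeezed between those of $H$ and those of any parabolic normalizing $H$) the normalizer cannot be strictly larger than $P$. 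One must be slightly careful that the argument does not become circular — "strongly horospherical" is part of what we are proving — so in practice I would first establish directly that $P$ normalizes $H$ (normality of kernels of characters), deduce $N_G(H) \supseteq P$ is parabolic, hence $H$ is strongly horospherical, then invoke Lemma~\ref{lem heights} and Theorem~\ref{thm:numericalfunction} to pin down $N_G(H) = P$.
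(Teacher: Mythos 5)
Your proposal is correct and follows essentially the same route as the paper: part (1) is exactly the intended consequence of Proposition~\ref{prop P/H is a torus} (with $M$ the pullback of $X^\ast(P/H)$ and characters separating points of a torus scheme-theoretically), and for part (2) the paper likewise observes $U \subset \ker\chi$, that $P$ normalizes $H$ so $N_G(H)$ is parabolic, and then pins down $N_G(H) = P$ via Lemma~\ref{lem heights} and the fact that a parabolic is determined by its intersections with the $U_{-\gamma}$ (Theorem~\ref{thm:numericalfunction}). The extra care you take about non-circularity and scheme-theoretic intersections is exactly the right reading of the paper's terser argument.
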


\begin{proof}
\ref{prop characterization strongly horo car p>2 item i}: {
By Proposition~\ref{prop P/H is a torus}, the quotient $\T=P/H$ is a torus. Fix an isomorphism of algebraic groups
$\T \simeq \Gm^r$, with $r \geq 0$, and let $\chi_i \colon P \to \Gm$
denote the element of the character lattice $X^\ast(P)$ obtained by composing the quotient morphism
$P \to \T \simeq \Gm^r$ with the $i$-th projection $\Gm^r \to \Gm$. Let $M$ be the rank-$r$ sublattice of $X^\ast(P)$ generated by the $\chi_i$. Then $H=\bigcap_{\chi \in M} \ker(\chi)$.}\\
\ref{prop characterization strongly horo car p>2 item ii}: Let \( M \) be a sublattice of the character lattice \( X^\ast(P) \), and define \( H := \bigcap_{\chi \in M} \ker(\chi) \). For every \( \chi \in X^\ast(P) \), we have \( U \subset \ker(\chi) \), which implies that \( H \) is horospherical. Furthermore, it is clear that \( P \subset N_G(H) \). Hence, \( Q := N_G(H) \) is a parabolic subgroup of \( G \).
Now, for every positive root \( \gamma \) of \( G \), consider \( U_{-\gamma} \). We have 
\[
U_{-\gamma} \cap Q = U_{-\gamma} \cap H \subset U_{-\gamma} \cap P,
\]
where the first equality follows from Lemma~\ref{lem heights}. Therefore, \( Q \subset P \). It follows that \( N_G(H) = P \).
\end{proof}
 
\begin{remark} \label{rk: classification in the case p=2}
Assume that the base field has characteristic $p=2$.  
Let $H$ be a strongly horospherical subgroup with normalizer $P$.  
Then it may happen that $HT \subsetneq P$, yet the quotient $HT/H$ is still a torus:
\[
HT/H \simeq \mathbb{T} := (P/H)_{\mathrm{red}}.
\]
Thus $H$ is always obtained as the intersection of the kernels of characters of the parabolic subgroup $HT$, 
whereas its normalizer $P=N_G(H)$ can be larger, the excess being measured precisely by the unipotent infinitesimal group $V$ defined in Proposition~\ref{prop P/H is a torus}\eqref{item 2 prop torus}.
\end{remark}

Let $\mathcal{P}$ denote the set of functions $\Phi^+ \to \mathbb{N} \cup \{\infty\}$ associated with parabolic subgroups of $G$ (see Section~\ref{sec: associated function}). By Theorem~\ref{thm:numericalfunction}, there is a one-to-one correspondence between the set of conjugacy classes of parabolic subgroups of $G$ and the set $\mathcal{P}$. This parametrization can be naturally extended to strongly horospherical subgroups of $G$.

\begin{corollary}\label{cor: classification strongly horo}
Assume that $p \geq 3$. 
There is a one-to-one correspondence between the set of conjugacy classes of strongly horospherical subgroups of $G$ and the set of pairs $(\varphi, M)$, where $\varphi \in \mathcal{P}$ and $M$ is a sublattice of $X^\ast(P_\varphi)$, with $P_\varphi$ the standard parabolic subgroup of $G$ associated with the function $\varphi$.
\end{corollary}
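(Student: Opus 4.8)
The plan is to assemble the bijection from two ingredients already available: the parametrization of conjugacy classes of parabolic subgroups by the set $\mathcal{P}$ of associated functions (Theorem~\ref{thm:numericalfunction}), and the description, for a \emph{fixed} normalizer $P$, of strongly horospherical subgroups with that normalizer (Proposition~\ref{prop characterization strongly horo car p>2}). First I would define the map from conjugacy classes to pairs. Given a strongly horospherical $H\subset G$, the subgroup $P:=N_G(H)$ is parabolic by definition; its conjugacy class corresponds to a unique $\varphi\in\mathcal{P}$ and contains a unique standard representative $P_\varphi$. Choosing $g\in G$ with $gPg^{-1}=P_\varphi$, the conjugate $H':=gHg^{-1}$ is strongly horospherical with $N_G(H')=P_\varphi$, so by the first part of Proposition~\ref{prop characterization strongly horo car p>2} we may write $H'=\bigcap_{\chi\in M}\ker(\chi)$ for a sublattice $M\subseteq X^\ast(P_\varphi)$. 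I would send the class of $H$ to the pair $(\varphi,M)$.

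Next I would verify that this assignment is well defined, i.e.\ independent of all choices. The value $\varphi$ depends only on the conjugacy class of $N_G(H)$, hence only on that of $H$. Two admissible choices of the conjugator $g$ differ by an element of $N_G(P_\varphi)$, which equals $P_\varphi$ by Corollary~\ref{cor: normalizerthm}; and since $H'\trianglelefteq N_G(H')=P_\varphi$, any element of $P_\varphi$ fixes $H'$, so $H'$ itself is canonically attached to the class of $H$. It then remains to see that $M$ is uniquely recovered from $H'$: because $p\geq 3$, the quotient $\mathbb{T}:=P_\varphi/H'$ is a torus by Proposition~\ref{prop P/H is a torus}, the quotient morphism $P_\varphi\to\mathbb{T}$ induces an injection $X^\ast(\mathbb{T})\hookrightarrow X^\ast(P_\varphi)$, and one checks that its image is precisely $M$ while $H'=\ker(P_\varphi\to\mathbb{T})=\bigcap_{\chi\in M}\ker(\chi)$; thus $M$ is intrinsic to $H'$ and the map is well defined. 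Surjectivity is then immediate from the second part of Proposition~\ref{prop characterization strongly horo car p>2}: for any $\varphi\in\mathcal{P}$ and any sublattice $M\subseteq X^\ast(P_\varphi)$, the subgroup $\bigcap_{\chi\in M}\ker(\chi)$ is strongly horospherical with normalizer $P_\varphi$, so its class is sent to $(\varphi,M)$.

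For injectivity, suppose the classes of two strongly horospherical subgroups $H_1,H_2$ are sent to the same pair $(\varphi,M)$. By construction this means that suitable conjugates $H_1',H_2'$ with $N_G(H_i')=P_\varphi$ satisfy $H_1'=\bigcap_{\chi\in M}\ker(\chi)=H_2'$, so $H_1$ and $H_2$ are conjugate and the map is injective. I expect the only genuinely non-formal step to be the recovery of $M$ from $H'$ in the well-definedness argument, i.e.\ the fact that distinct sublattices of $X^\ast(P_\varphi)$ yield distinct subgroups: this rests on the torus structure of $P_\varphi/H'$ and on the standard anti-equivalence between diagonalizable groups and their character lattices, and it is exactly the place where the hypothesis $p\geq 3$ enters in an essential way (over a field of characteristic $2$ the quotient need not be a torus, cf.\ Example~\ref{ex: not always a torus when p=2}, and one would have to replace $N_G(H)$ by $HT$, cf.\ Remark~\ref{rk: classification in the case p=2}). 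Everything else is bookkeeping with conjugacy classes of parabolic subgroups.
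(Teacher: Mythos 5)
Your proposal is correct and takes essentially the same route as the paper: both constructions rest on combining Theorem~\ref{thm:numericalfunction} with the two parts of Proposition~\ref{prop characterization strongly horo car p>2}, and on the fact that $M$ is recovered intrinsically as the character lattice of the torus $P_\varphi/H'$. The only cosmetic difference is that you handle well-definedness by invoking $N_G(P_\varphi)=P_\varphi$ (Corollary~\ref{cor: normalizerthm}), whereas the paper normalizes choices by fixing the representative of the conjugacy class containing $U$; these amount to the same thing.
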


\begin{proof}
	Given a standard parabolic subgroup $P_\varphi$ of $G$ and a sublattice $M \subset X^\ast(P_\varphi)$, we define a strongly horospherical subgroup $H \subset G$ as the scheme-theoretic intersection
\[
H := \bigcap_{\chi \in M} \ker(\chi).
\]
We then map the pair $(\varphi,M)$ to the conjugacy class of $H$.

Conversely, given a conjugacy class of strongly horospherical subgroups of $G$, we fix the representative $H$ containing the unipotent radical $U$ of $B$. We define the standard parabolic subgroup $P := N_G(H)$. Then there exists a unique element $\varphi \in \mathcal{P}$ such that $P = P_\varphi$. We define $M$ as the sublattice of $X^\ast(P)$ consisting of all characters $\chi \in X^\ast(P)$ that vanish on $H$, i.e., those that factor through the quotient torus $\mathbb{T} = P/H$.

It is then straightforward, using Proposition~\ref{prop characterization strongly horo car p>2}, to verify that these two constructions are well-defined and inverses of each other.
\end{proof}

\begin{remark} Assume that $p \geq 3$. 
The strongly horospherical subgroup $H$, corresponding to a pair $(\varphi,M)$ as above, is smooth if and only if $P_\varphi$ is smooth and the quotient lattice $X^\ast(T)/M$ has no $p$-torsion.
\end{remark}

\begin{example}
It is possible for a strongly horospherical subgroup $H \subset G$ to be non-smooth while $P$ is smooth. For instance, consider the case where $G = \SL_2$, $q=p^t \geq 3$, and define 
\[
H :=\left\{
    \begin{bmatrix}
        a & b \\ 
        0 & d
    \end{bmatrix}
    \in \SL_2 \ \middle|\  a^q = 1
\right\}.
\]
(This subgroup corresponds to \(\mathcal{A}(q,\infty)\) in \cite[Lemma 3.1]{Knop}.) Then, $H$ is a strongly horospherical non-smooth subgroup of $G$ while $P = N_G(H) = B$ is smooth. In this case, the quotient lattice is $X^\ast(T)/M \simeq \Z/q\Z$.
\end{example}

\section{Horospherical subgroups are strongly horospherical when \texorpdfstring{$p \geq 3$}{p>2}}\label{sec: main section}
Let $G$ be a reductive group, which we may assume, without loss of generality, to be of the form $G = C \times G'$, where $C$ is a torus and $G' = \prod_{j \in J} G_{(j)}$ is a simply connected semisimple group.
This section is dedicated to proving Theorem~\ref{th: general case}, i.e.~to proving that horospherical subgroups of $G$ are strongly horospherical when $p \geq 3$.

\smallskip

\subsection{Subgroups of height at most one} \label{sec: Subgroups of height at most one}
Let us recall a fundamental structure theorem describing finite subgroup schemes of height at most one. 
It will be useful in the proofs of Proposition~\ref{prop: connected smooth casev2} and Theorem~\ref{th: general case}.

\begin{theorem}[{\emph{cf.~\cite[II, \S~7, n°4]{DG}}}]
\label{th: DG height one}
Let $G$ be a linear algebraic group over $\K$. Then:
\begin{enumerate}
  \item \label{th: DG height one item i} The map $J \mapsto \Lie(J)$ induces a bijection between the set of subgroup schemes of ${}_1G$ and the set of restricted Lie subalgebras of $\Lie(G)$. Under this correspondence, normal subgroup schemes correspond to  restricted Lie ideals.
  
  \item \label{th: DG height one item ii} Let $J$ and $J'$ be subgroup schemes {of ${}_1G$.} Then $J \subset J'$ if and only if $\Lie(J) \subset \Lie(J')$.
  
  \item If $f_1$ and $f_2$ are homomorphisms with source ${}_1G$, then $f_1 = f_2$ if and only if $Df_{1,e} = Df_{2,e}$.
  
  \item If $J$ is a subgroup scheme {of ${}_1G$}, then {there is an equality of restricted Lie algebras}
  \[
  \Lie(N_G(J)) = N_{\Lie(G)}(\Lie(J)),
  \]
  where $N_G(J)$ denotes the normalizer of $J$ in $G$, and $N_{\Lie(G)}(\Lie(J))$ the normalizer of the Lie subalgebra $\Lie(J)$ in $\Lie(G)$.
  
  \item Let $V$ be a finite-dimensional representation of $G$. Then a vector subspace of $V$ is ${}_1G$-stable if and only if it is $\Lie(G)$-invariant.
\end{enumerate}
\end{theorem}

\subsection{Warm-up: the \texorpdfstring{$\SL_2$}{SL(2)} case} \label{sec: SL2 case}
We now consider the simplest case: all subgroup of $\SL_2$ have been classified by Knop in \cite{Knop}. 
Out of this classification, we easily deduce the classification of the horospherical subgroups of $\SL_2$. 

\begin{theorem}[{see \cite[Theorem 5.1 and Lemma 3.1]{Knop}}] 	\label{thm: Knop horo subgroups SL2}
	Assume that $p \geq 2$.
Let $H$ be a horospherical subgroup of $G = \SL_2$. Then $H$ is conjugate to the pullback by an iterated Frobenius of one of the following:
\begin{itemize}
    \item $G$, or $B$, or $\boldsymbol{\mu}_n \ltimes U$ for some $n \geq 1$, in which case the normalizer of the group contains $B$.
    \item If $p = 2$, the subgroup $\mathcal{B}(\infty)$ (see Example~\ref{Binfinity}), in which case the group coincides with its normalizer.
\end{itemize}
\end{theorem}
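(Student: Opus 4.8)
The statement is really a matter of extracting the horospherical subgroups from Knop's complete list of subgroup schemes of $\SL_2$. My plan is therefore to recall the structure of that list and then single out exactly those members that contain a maximal unipotent smooth connected subgroup, i.e.\ a conjugate of $U = R_u(B)$.

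First I would recall (from \cite[Theorem 5.1]{Knop}) that every subgroup scheme of $\SL_2$ is, up to conjugacy and up to pullback by an iterated Frobenius $F^m$, one of: a finite subgroup of $\Gm$ or its normalizer in $N_{\SL_2}(T)$; a subgroup of $B$ of the shape $\boldsymbol{\mu}_n \ltimes \boldsymbol{\alpha}_{p^s}$ (so in particular $\boldsymbol{\mu}_n \ltimes U$ when $s = \infty$, and $B$ itself, and $U$ itself); the subgroups $\mathcal{A}(q,\infty)$ from \cite[Lemma 3.1]{Knop}; the exceptional family $\mathcal{B}(\infty)$ that occurs only when $p = 2$; and $G = \SL_2$. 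Pulling back by $F^m$ does not affect whether a subgroup is horospherical, since $(F^m)^{-1}(U)$ is again maximal unipotent smooth connected: indeed $(F^m)^{-1}(H) \supset (F^m)^{-1}(U) \supset U$, and conversely if $(F^m)^{-1}(H)$ contains a conjugate of $U$ then pushing forward by $F^m$ shows $H$ does too. So it suffices to decide which groups on the list, \emph{before} Frobenius pullback, contain (a conjugate of) $U$.

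Next I would run through the list. The subgroups contained in $N_{\SL_2}(T)$ are finite and, being contained in the normalizer of a torus, cannot contain the one-dimensional connected group $U$; they are not horospherical. Among the subgroups of $B$: $\boldsymbol{\mu}_n \ltimes \boldsymbol{\alpha}_{p^s}$ contains $U = \boldsymbol{\alpha}_{p^\infty}$ precisely when $s = \infty$, giving $\boldsymbol{\mu}_n \ltimes U$ (with $n = 1$ yielding $U$ itself, $n = p^r$ absorbed into a Frobenius pullback of a smaller case, and the generic $n$ giving a genuinely new family); for $s < \infty$ the unipotent part is infinitesimal, so these are not horospherical. Here I should note that $\mathcal{A}(q,\infty)$ (see Example~\ref{ex: not always a torus when p=2} and the example following Corollary~\ref{cor: classification strongly horo}) is exactly $\boldsymbol{\mu}_q \ltimes U$, so it is already covered by this family. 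The whole group $G = \SL_2$ trivially contains $U$. Finally, for $p = 2$, the group $\mathcal{B}(\infty)$ of Example~\ref{Binfinity} contains $U = \mathcal{B}(\infty)_{\red}$ (set $a = 1$) and hence is horospherical, and loc.\ cit.\ computes that it is self-normalizing. In each horospherical case one reads off the normalizer from Knop's classification: $B$, and $\boldsymbol{\mu}_n \ltimes U$, and $G$ all have normalizer containing $B$ (a parabolic), whereas $\mathcal{B}(\infty)$ is its own normalizer, which is not parabolic.

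I do not expect a serious obstacle here: the only point requiring a little care is the bookkeeping between ``pullback by Frobenius'' and the intrinsic property of being horospherical, together with the identification of $\mathcal{A}(q,\infty)$ with a member of the $\boldsymbol{\mu}_n \ltimes U$ family so that the list in the statement is non-redundant. Both are immediate once one writes the groups out explicitly as in \cite{Knop}.
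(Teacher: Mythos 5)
Your proposal is correct and follows exactly the route the paper intends: the paper gives no written proof, merely citing Knop's classification of all subgroup schemes of $\SL_2$ and asserting that the horospherical ones are "easily deduced" from it, which is precisely the filtering you carry out (including the compatibility of Frobenius pullback with the horospherical property and the identification of $\mathcal{A}(q,\infty)$ with $\boldsymbol{\mu}_q \ltimes U$). No gap; your write-up just makes explicit the bookkeeping the paper leaves implicit.
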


The next observation will be useful in the proof of Theorem~\ref{th: general case}.

\begin{corollary} \label{cor: Knop horo subgroups SL2}
Assume that $p \geq 3$. Then every horospherical subgroup of $\SL_2$ is strongly horospherical.
\end{corollary}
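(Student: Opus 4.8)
The plan is to read off the statement directly from Knop's classification (Theorem~\ref{thm: Knop horo subgroups SL2}), so this corollary is essentially a bookkeeping consequence rather than a new argument. The point is that when $p \geq 3$, the exceptional case $\mathcal{B}(\infty)$ from Example~\ref{Binfinity} — the unique horospherical subgroup of $\SL_2$ that coincides with its own (non-parabolic) normalizer — does not occur, since it exists only in characteristic $2$. Hence every horospherical $H \subset \SL_2$ is, up to conjugacy, the pullback by an iterated Frobenius $F^m$ of one of $G$, $B$, or $\boldsymbol{\mu}_n \ltimes U$.

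First I would recall that for each of these three ``base'' groups $H_0 \in \{G, B, \boldsymbol{\mu}_n \ltimes U\}$, the normalizer $N_G(H_0)$ is parabolic: $N_G(G) = G$, $N_G(B) = B$ (Corollary~\ref{cor: normalizerthm}, or classically), and $N_G(\boldsymbol{\mu}_n \ltimes U) \supseteq B$ is parabolic as stated in Theorem~\ref{thm: Knop horo subgroups SL2}; since $\SL_2$ has only two conjugacy classes of parabolic subgroups, $N_G(\boldsymbol{\mu}_n \ltimes U)$ equals either $B$ or $G$, and in either case it is parabolic. Thus each $H_0$ is strongly horospherical.

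Next I would check that the property of being strongly horospherical is stable under pullback by the Frobenius morphism $F^m \colon \SL_2 \to \SL_2^{(m)}$. Writing $H = (F^m)^{-1}(H_0)$ (after identifying $\SL_2^{(m)}$ with $\SL_2$, as $\SL_2$ is defined over the prime field), we have $H \supseteq {}_m G \supseteq {}_1 U$, and more to the point $H$ contains $U$ itself since $H_0 \supseteq U = F^m(U)$; so $H$ is horospherical. For the normalizer, one uses that $g$ normalizes $(F^m)^{-1}(H_0)$ if and only if $F^m(g)$ normalizes $H_0$, because $F^m$ is surjective and $(F^m)^{-1}$ commutes with conjugation; hence $N_{\SL_2}(H) = (F^m)^{-1}\bigl(N_{\SL_2^{(m)}}(H_0)\bigr)$, which is the pullback of a parabolic subgroup and therefore again parabolic. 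So $H$ is strongly horospherical. Finally, since being horospherical, being strongly horospherical, and being the pullback of a fixed group are all conjugation-invariant notions, the conclusion holds for every horospherical subgroup of $\SL_2$, not merely the standard representatives.

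The only mild subtlety — the ``main obstacle'', though it is minor here — is the normalizer computation for the Frobenius pullback: one must make sure that $N_G\bigl((F^m)^{-1}(H_0)\bigr) = (F^m)^{-1}\bigl(N_{G^{(m)}}(H_0)\bigr)$ as \emph{scheme}-theoretic normalizers, not just on $\K$-points. This is standard: $F^m$ is a faithfully flat morphism of group schemes, so for any $R$-algebra point $g$ of $G$, conjugation by $g$ preserves $(F^m)^{-1}(H_0)$ if and only if conjugation by $F^m(g)$ preserves $H_0$ (one direction is immediate by functoriality; the other uses that $(F^m)^{-1}(H_0)$ is the full preimage). Granting this, the corollary follows immediately from Theorem~\ref{thm: Knop horo subgroups SL2}.
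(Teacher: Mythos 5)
Your argument is correct and is essentially the paper's (the corollary is stated without proof, as an immediate consequence of Theorem~\ref{thm: Knop horo subgroups SL2}: for $p \geq 3$ the exceptional case $\mathcal{B}(\infty)$ is absent, and in all remaining cases the theorem already records that the normalizer contains $B$, hence is parabolic). Your extra verification that $N_G\bigl((F^m)^{-1}(H_0)\bigr) = (F^m)^{-1}\bigl(N_{G^{(m)}}(H_0)\bigr)$ is a sound way to re-derive that normalizer statement, but it is not needed beyond what the theorem asserts.
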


\subsection{The smooth case}
In this section, we prove Theorem~\ref{th: general case}, first under the additional assumption that $H$ is smooth and connected (Proposition~\ref{prop: connected smooth casev2}~\ref{prop: connected smooth casev2 item ii}), and then under the weaker assumption that $H$ is merely smooth (corollary~\ref{cor: smooth case}).

\begin{proposition}\label{prop: connected smooth casev2}
\label{prop:LieH-T-stable}
Assume that $p \geq 3$. Let $H$ be a horospherical subgroup of $G$. Then:
\begin{enumerate}
    \item\label{prop: connected smooth casev2 item i} The Lie algebra $\Lie(H)$ is $T$-stable (for the restricted adjoint action $T \curvearrowright \Lie(G)$).
    \item\label{prop: connected smooth casev2 item ii} If $H$ is furthermore smooth and connected, then $H$ is strongly horospherical.
\end{enumerate}
\end{proposition}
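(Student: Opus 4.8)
The plan is to prove \ref{prop: connected smooth casev2 item i} directly and then derive \ref{prop: connected smooth casev2 item ii} from it by an induction on $\dim G$. Replacing $H$ by a conjugate, I assume $U\subseteq H$, so that $\Lie(H)$ is a restricted Lie subalgebra of $\Lie(G)$ containing $\Lie(U)$. By Propositions \ref{prop: hhh containing LieU} and \ref{prop: LieH G2}, $\Lie(H)$ is automatically $T$-stable unless $p=3$, $G$ has a simple factor $G_{(i)}\simeq G_2$, and $\Lie(H)\cap\Lie(G_{(i)})=\mathfrak h_\lambda$ for some $\lambda\in\K^\times$ (notation of Example \ref{ex: hb}); so the entire content of \ref{prop: connected smooth casev2 item i} is to exclude this last possibility. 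Suppose it occurs. Since $U\subseteq H$, the subspace $\Lie(H)$ is stable under the adjoint action of $U$, hence $\operatorname{Ad}(u_{\alpha_1}(t))\cdot X_\lambda\in\Lie(H)\otimes_\K\K[t]$, where $\alpha_1$ is the short simple root of $G_{(i)}$ and $X_\lambda=\lambda X_{-\alpha_2}+X_{-3\alpha_1-\alpha_2}$. Now $\operatorname{Ad}(u_{\alpha_1}(t))$ fixes $X_{-\alpha_2}$ (as $\alpha_1-\alpha_2$ is not a root), while the $\alpha_1$-string through $-3\alpha_1-\alpha_2$ is the length-four string $-3\alpha_1-\alpha_2,\ -2\alpha_1-\alpha_2,\ -\alpha_1-\alpha_2,\ -\alpha_2$, along which the successive structure constants have absolute values $1,2,3$; hence in the integral Chevalley form the divided power $X_{\alpha_1}^{(3)}$ sends $X_{-3\alpha_1-\alpha_2}$ to $\pm\frac{1\cdot 2\cdot 3}{3!}X_{-\alpha_2}=\pm X_{-\alpha_2}$, which is \emph{nonzero} in characteristic $3$ even though $3!=0$ and the constant $\mathcal N_{\alpha_1,-\alpha_1-\alpha_2}=\pm 3$ vanishes. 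Thus $\operatorname{Ad}(u_{\alpha_1}(t))\cdot X_\lambda=X_\lambda+c_1t\,X_{-2\alpha_1-\alpha_2}+c_2t^2\,X_{-\alpha_1-\alpha_2}+c_3t^3\,X_{-\alpha_2}$ with $c_3\neq 0$. As $X_\lambda,X_{-2\alpha_1-\alpha_2},X_{-\alpha_1-\alpha_2}\in\mathfrak h_\lambda\subseteq\Lie(H)$, extracting the coefficient of $t^3$ gives $X_{-\alpha_2}\in\Lie(H)$; but then $0\neq X_{-\alpha_2}\in\Lie(H)\cap\Lie(G_{(i)})=\mathfrak h_\lambda$, contradicting $\mathfrak h_\lambda\cap\mathfrak g_{-\alpha_2}=0$. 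This proves \ref{prop: connected smooth casev2 item i}.

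For \ref{prop: connected smooth casev2 item ii}, let $H$ be smooth connected and horospherical, with $U\subseteq H$; by part \ref{prop: connected smooth casev2 item i}, $\Lie(H)$ is $T$-stable, and it is also $U$-stable. Hence the scheme-theoretic stabilizer of the subspace $\Lie(H)\subseteq\Lie(G)$ for $\operatorname{Ad}$ contains $B$, so its reduced identity component is a parabolic subgroup $P\supseteq B$, and $H\subseteq P$. If $P\subsetneq G$, then $\dim(L)<\dim(G)$ for the Levi quotient $\pi\colon P\to L:=P/R_u(P)$, and $R_u(P)\subseteq R_u(B)=U\subseteq H$ is normal in $H$, so $\pi(H)=H/R_u(P)$ is a smooth connected horospherical subgroup of $L$ containing the maximal unipotent $U_L=\pi(U)$. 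By the induction hypothesis applied to $L$, $\pi(H)$ is strongly horospherical; a parabolic subgroup of $L$ containing $U_L$ contains the Borel $\pi(B)=N_L(U_L)$ and hence the maximal torus $\pi(T)$, so $\pi(T)$ normalizes $\pi(H)$. Since $\pi^{-1}(\pi(H))=H\,R_u(P)=H$, it follows that for every $t\in T$ one has $\pi(tHt^{-1})=\pi(H)$, hence $tHt^{-1}\subseteq H$, and therefore $tHt^{-1}=H$. Thus $N_G(H)\supseteq\langle U,T\rangle=B$ is parabolic, i.e.\ $H$ is strongly horospherical.

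It remains to settle the base case $P=G$, i.e.\ the case where $\Lie(H)$ is an ideal of $\Lie(G)$. Writing $G=C\times G'$ with $C$ a central torus and $G'=\prod_j G_{(j)}$ simply connected semisimple, any ideal of $\Lie(G)$ containing $\Lie(U)$ contains each $\Lie(G_{(j)})$ (for $p\geq 3$ and $G_{(j)}$ simply connected, the ideal of $\Lie(G_{(j)})$ generated by $\Lie(U_{(j)})$ is all of $\Lie(G_{(j)})$, since it contains some coroot, hence $\Lie(T_{(j)})$, hence $\Lie(G_{(j)})$), so $\Lie(H)=\Lie(G')\oplus\mathfrak c_0$ with $\mathfrak c_0=\Lie(H)\cap\Lie(C)$ a $p$-subalgebra of $\Lie(C)$, and $\mathfrak c_0=\Lie(C_0)$ for a uniquely determined subtorus $C_0\subseteq C$. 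The composite $H\hookrightarrow G\twoheadrightarrow G/(G'C_0)=C/C_0$ has differential equal to the restriction of $\Lie(G)\to\Lie(C)/\mathfrak c_0$ to $\Lie(H)=\Lie(G')\oplus\mathfrak c_0$, which is zero; so its image has trivial Lie algebra, but this image is also smooth (being the image of the smooth group $H$) and connected, hence trivial. Therefore $H\subseteq G'C_0$, and comparing dimensions, $\dim(H)=\dim\Lie(H)=\dim\Lie(G'C_0)=\dim(G'C_0)$, together with connectedness forces $H=G'C_0$. As $G'C_0$ is a normal subgroup of $G$, we get $N_G(H)=G$, which is parabolic, so $H$ is strongly horospherical.

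The main obstacle is part \ref{prop: connected smooth casev2 item i}: ruling out the exceptional $G_2$ configuration in characteristic $3$ requires an explicit divided-power computation in the Chevalley basis, precisely because in that characteristic the naive exponential formula for $\operatorname{Ad}(u_{\alpha_1}(t))$ breaks down (both $3!$ and the relevant structure constant vanish) while the key coefficient nonetheless survives. Once part \ref{prop: connected smooth casev2 item i} and the inputs of Sections \ref{sec: Lie algebras sec} and \ref{sec: parabolic groups} are in place, part \ref{prop: connected smooth casev2 item ii} is a fairly formal induction; the only points needing a little care there are the smoothness of the Levi quotient of $H$, the identification $\pi^{-1}(\pi(H))=H$, and the fact that an infinitesimal smooth group is trivial.
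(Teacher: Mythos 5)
Your proof is correct, but it departs from the paper's argument at both of its two key points, in ways worth recording. For part~(1), the paper rules out the exceptional configuration $\Lie(H)\cap\Lie(G_{(i)})=\mathfrak h_\lambda$ structurally: since $\Lie(K_{G_{(i)}})\subset\mathfrak h_\lambda$, Theorem~\ref{th: DG height one}\,\eqref{th: DG height one item ii} forces $K_{G_{(i)}}\subset H$, and one then passes to the quotient by the very special isogeny and applies Proposition~\ref{prop: LieH G2} a second time, the composite of the two isogenies being Frobenius. You instead exploit the fact that $U\subseteq H\subseteq N_G(H)$, so that $\operatorname{Ad}(u_{\alpha_1}(t))$ must preserve $\Lie(H)$, and observe that the divided power $(\operatorname{ad}X_{\alpha_1})^{3}/3!$ sends $X_{-3\alpha_1-\alpha_2}$ to $\pm\tfrac{1\cdot2\cdot3}{3!}X_{-\alpha_2}=\pm X_{-\alpha_2}$, a unit multiple even in characteristic $3$; extracting the $t^3$-coefficient then produces $X_{-\alpha_2}\in\Lie(H)$, which is not in $\mathfrak h_\lambda$. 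I checked the string and binomial coefficients ($m=0$, so all $M_{\alpha_1,-3\alpha_1-\alpha_2,k}=\pm1$): the computation is right, and it is more elementary than the paper's route; it also explains \emph{why} $\mathfrak h_\lambda$ is a restricted subalgebra yet never of the form $\Lie(H)$ for $H\supseteq U$ --- it is $\operatorname{ad}(\Lie U)$-stable but not $\operatorname{Ad}(U)$-stable. For part~(2), the paper simply asserts that a smooth connected horospherical subgroup is strongly horospherical if and only if its Lie algebra is $T$-stable, and reduces~(2) to~(1); you supply an actual proof of the nontrivial implication by inducting on $\dim G$ through the reduced stabilizer parabolic $P$ of $\Lie(H)$ and its Levi quotient, with a separate treatment of the case where $\Lie(H)$ is $\operatorname{Ad}(G)$-stable. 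This is sound (the points needing care --- that $R_u(P)\subseteq U\subseteq H$ so $\pi^{-1}(\pi(H))=H$, that a parabolic of $L$ containing $U_L$ contains $\pi(T)$, that a smooth connected group with trivial Lie algebra is trivial, and that $p$-subalgebras of $\Lie(C)$ are Lie algebras of subtori --- are all handled or standard), and it fills in a step the paper leaves implicit; the only thing I would flag is that the reduction to $G=C\times G'$ with $G'$ simply connected should be redone after passing to the Levi, via a finite central cover, but the paper is equally brief about this normalization.
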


\begin{proof}
If $H$ is a connected smooth horospherical subgroup, then it is strongly horospherical if and only if $\Lie(H)$ is $T$-stable. 
Hence it suffices to prove \eqref{prop: connected smooth casev2 item i}.

If $p \geq 5$, or if $G$ does not contain any simple factor of type $G_2$, then the result follows from Proposition~\ref{prop: hhh containing LieU}. Thus, we may assume that $p = 3$ and that $G$ contains at least one simple factor of type $G_2$; fix such a factor and denote it by $G_{(i)}$.

By Proposition~\ref{prop: LieH G2}	, it is enough to show that $\Lie(H) \cap \Lie(U^-_i)$ cannot be of the form $\hhh_\lambda \cap \Lie (U^-_i)$ for any $\lambda \in \K^\times$ (see Example~\ref{ex: hb} for the definition of $\hhh_\lambda$). Assume, for contradiction, that this is the case, i.e.~that $\Lie(H) \cap \Lie (U^-_i)= \hhh_\lambda \cap \Lie (U^-_i)$ for some $\lambda \in \K^\times$.

The kernel $K_{G_{(i)}}$ of the very special isogeny of $G_{(i)}$ is a subgroup of height 1 whose Lie algebra is contained  in $\hhh_\lambda \subset \Lie(H)$.
Hence, by Theorem~\ref{th: DG height one}~\eqref{th: DG height one item ii}, the kernel $K_{G_{(i)}}$ is contained in $H$.
Now consider the quotient group:
\[
\overline{H} := H / K_{G_{(i)}} \subset C \times \left( \prod_{j \neq i} G_{(j)} \right) \times \overline{G_{(i)}},
\]
where $\overline{G_{(i)}}$ is the image of $G_{(i)}$ under the very special isogeny and is again a simple group of type $G_2$.
By construction, $\overline{H}$ is a horospherical subgroup. Then by Proposition~\ref{prop: LieH G2} applied to $\Lie(\overline{H})$, we again have two possibilities:
\begin{itemize}
    \item either $\Lie(\overline{H}) \cap \Lie(\overline{U}^-_i)$ is $\overline{T}$-stable;
    \item or there exists $\mu \in \K^\times$ such that $\Lie(\overline{H}) \cap \Lie(\overline{U}^-_i) = \overline{\hhh}_\mu \cap \Lie(\overline{U}^-_i)$.
\end{itemize}

In the first case, since $\Lie(\overline{H}) \cap \Lie(\overline{U}^-_i)$ is $\overline{T}$-stable, pulling back by the very special isogeny gives that $\Lie(H) \cap \Lie(U^-_i)$ is also $T$-stable, contradicting the assumption that $\Lie(H) \cap \Lie(U^-_i) = \hhh_\lambda \cap \Lie(U^-_i)$.

In the second case, we observe that the kernel $K_{\overline{G_{(i)}}}$ of the very special isogeny of $\overline{G_{(i)}}$ is then contained in $\overline{H}$:
\begin{equation}
\label{K contained}
K_{\overline{G_{(i)}}} \subset \overline{H} = H / K_{G_{(i)}}.
\end{equation}
Recall that the composition of the two successive very special isogenies equals the Frobenius morphism (see \cite[Definition 7.1.3]{CGP15}):
\[
\pi_{\overline{G}_{(i)}} \circ \pi_{G_{(i)}} = F_{G_{(i)}}.
\]
Thus, the inclusion \eqref{K contained} implies that the full Frobenius kernel ${}_1 G_{(i)}$ is contained in $H$. 
In particular,
\[
\Lie(H) \cap \Lie(U^-_i) = \Lie(U^-_i),
\]
which contradicts the assumption that this intersection equals $\hhh_\lambda \cap \Lie(U^-_i)$. 
Hence, in all cases, $\Lie(H)$ is $T$-stable, which proves \eqref{prop: connected smooth casev2 item ii}, and therefore also \eqref{prop: connected smooth casev2 item i}.
\end{proof}

\begin{corollary} \label{cor: smooth case}
Assume that $p \geq 3$. Let $H$ be a smooth horospherical subgroup of $G$. Then $H$ is strongly horospherical.
\end{corollary}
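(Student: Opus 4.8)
The plan is to deduce the general smooth case from the connected smooth case (Proposition~\ref{prop: connected smooth casev2}~\ref{prop: connected smooth casev2 item ii}) by passing to the identity component. Let $H$ be a smooth horospherical subgroup of $G$, and let $H^\circ$ be its identity component. Then $H^\circ$ is a smooth connected subgroup of $G$; moreover it still contains $U$, since $U$ is connected and contained in $H$, hence $U \subset H^\circ$. Thus $H^\circ$ is a connected smooth horospherical subgroup, and by Proposition~\ref{prop: connected smooth casev2}~\ref{prop: connected smooth casev2 item ii} it is strongly horospherical: its normalizer $P^\circ := N_G(H^\circ)$ is a parabolic subgroup of $G$. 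Equivalently, by Proposition~\ref{prop:LieH-T-stable}~\ref{prop: connected smooth casev2 item i}, $\Lie(H) = \Lie(H^\circ)$ is $T$-stable.

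The key step is then to promote the $T$-stability of $\Lie(H)$ to the statement that $H$ itself is normalized by a maximal torus, and more precisely that $N_G(H)$ is parabolic. First I would show $H$ is normalized by $T$: since $H$ is smooth, it is determined by its Lie algebra together with its component group acting on $G$; more concretely, $H$ is smooth, $H^\circ$ is normalized by $T$ (as $\Lie(H^\circ)$ is $T$-stable and $H^\circ$ is connected smooth, by the standard correspondence, e.g.\ \cite[Corollary 17.49]{Milne} or the fact that a connected smooth subgroup containing $U$ is a standard parabolic-type subgroup normalized by $T$), and $T$ normalizes $H^\circ$, hence $T$ acts on the finite component group $H/H^\circ$; but $T$ is connected, so this action is trivial, meaning $T$ normalizes every component of $H$, i.e.\ $T \subset N_G(H)$. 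Hence $H$ contains $U$ and is normalized by $T$, so $HT$ is a subgroup; since $H^\circ T = N_G(H^\circ)$ is parabolic and $HT \supset H^\circ T$, the subgroup $HT$ is parabolic as well (any subgroup containing a parabolic is parabolic). Finally, $N_G(H) \supset HT$ is parabolic, which is exactly the assertion that $H$ is strongly horospherical.

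A cleaner alternative, which I would actually prefer to write, avoids the component-group argument: apply Theorem~\ref{th: DG height one}~(5) or the smooth analogue to conclude directly. Since $\Lie(H)$ is $T$-stable and $H$ is smooth, the subgroup $H$ is normalized by the infinitesimal subgroup ${}_1 T$; but one wants all of $T$. The robust route is: $H^\circ$ is strongly horospherical with $N_G(H^\circ) =: P$ a (smooth, standard, after conjugation) parabolic containing $T$; then $T \subset P = N_G(H^\circ)$ normalizes $H^\circ$ and therefore acts on $H/H^\circ$ by conjugation, an action which is trivial by connectedness of $T$, so $T \subset N_G(H)$; consequently $B = TU \subset N_G(H)$, so $N_G(H)$ is a parabolic subgroup of $G$, and $H$ is strongly horospherical.

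The main obstacle I anticipate is purely bookkeeping: making sure that passing to $H^\circ$ genuinely reduces to the already-proved connected case (the only nontrivial point being $U \subset H^\circ$, which is immediate since $U$ is connected), and then justifying cleanly that $T$ normalizes all of $H$ and not merely $H^\circ$ — this uses only that $T$ is connected and acts on the finite étale group $H/H^\circ$. No new input beyond Proposition~\ref{prop: connected smooth casev2} and elementary facts about parabolic subgroups (a subgroup containing a parabolic is parabolic, and parabolics are their own normalizers, Corollary~\ref{cor: normalizerthm}) is needed.
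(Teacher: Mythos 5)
Your overall route is the same as the paper's: pass to $H^\circ$ (which is connected, smooth, and still contains $U$), apply Proposition~\ref{prop: connected smooth casev2}~\eqref{prop: connected smooth casev2 item ii} to see that $H^\circ$ is strongly horospherical, and then promote this to $H$. The gap is in the promotion step. You claim that since $T$ normalizes $H^\circ$, it ``acts on the finite component group $H/H^\circ$'' and that this action is trivial because $T$ is connected. But conjugation by $T$ does not induce an action on $H/H^\circ$ unless $T$ already normalizes $H$ --- which is exactly what you are trying to establish, so the argument is circular. What is true is that $T$ and $H$ both sit inside $N := N_G(H^\circ)$ and that $T$ acts by conjugation on $N/H^\circ$, with $H/H^\circ$ a finite subgroup of $N/H^\circ$; however, a connected group need not preserve a finite subgroup just because it fixes its (trivial) identity component. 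For instance, $\Gm$ acting on $\Ga$ by scaling preserves the identity component of the finite \'etale subgroup $\ker(x \mapsto x^p - x) \simeq \Z/p\Z$ but moves the subgroup itself. In general the $T$-orbit of a component of $H$ inside $N/H^\circ$ is connected but need not be finite, so connectedness alone does not force it to be a single point.

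The missing input is Proposition~\ref{prop P/H is a torus}~\eqref{item 1 prop torus}: since $H^\circ$ is strongly horospherical and $p \geq 3$, the quotient $N_G(H^\circ)/H^\circ$ is a torus, hence commutative. Then $H/H^\circ$, being a subgroup of a commutative group, is automatically normal in $N_G(H^\circ)/H^\circ$, so its preimage $H$ is normal in $N_G(H^\circ)$; consequently $N_G(H) \supseteq N_G(H^\circ)$ is parabolic, and $N_G(H^\circ)/H$ --- the quotient of a torus by a finite subgroup --- is again a torus. This is precisely how the paper concludes. So your outline is correct, but the step ``$T$ normalizes all of $H$'' needs the commutativity of $N_G(H^\circ)/H^\circ$ rather than an appeal to the connectedness of $T$.
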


\begin{proof}
Since $H$ is horospherical, $H^\circ$ is also horospherical, and thus strongly horospherical by Proposition~\ref{prop: connected smooth casev2}~\eqref{prop: connected smooth casev2 item ii}. Hence, by Proposition~\ref{prop P/H is a torus}\,\eqref{item 1 prop torus}, the quotient group $N_G(H^\circ)/H^\circ$ is a torus. 
In particular, the quotient variety
\[
N_G(H^\circ)/H = \left(N_G(H^\circ)/H^\circ\right) \big/ \left(H/H^\circ\right)
\]
is again a torus. That is, $N_G(H^\circ)$ normalizes $H$, which implies that $N_G(H) = N_G(H^\circ)$ is a parabolic subgroup of $G$, and thus $H$ is strongly horospherical.
\end{proof}

\subsection{The general case}
We are now ready to prove the main result of this article:

\begin{theorem} \label{th: general case}
Assume that $p \geq 3$. Let $H$ be a  horospherical subgroup of $G$. Then $H$ is strongly horospherical.
\end{theorem}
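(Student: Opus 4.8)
The plan is to prove that $N_G(H)$ is a parabolic subgroup of $G$. Since a subgroup scheme of $G$ containing a Borel subgroup is parabolic (as $G/N_G(H)$ is then a quotient of the proper variety $G/B$), since $U \subseteq H \subseteq N_G(H)$, and since any parabolic subgroup scheme containing a maximal unipotent subgroup is standard and hence contains $B = TU$, it suffices to show that $T$ normalizes $H$. Reducing first to the case where $G$ is semisimple (the central torus normalizes everything), I would argue by a double induction on $\dim G$ and on the height $m$ of $H$; the case $m = 0$ is Corollary~\ref{cor: smooth case}.

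So assume $m \geq 1$. By Proposition~\ref{prop:LieH-T-stable}\eqref{prop: connected smooth casev2 item i}, $\Lie(H)$ is $T$-stable. Two consequences: first, since the correspondence of Theorem~\ref{th: DG height one}\eqref{th: DG height one item i} between subgroup schemes of ${}_1G$ and restricted Lie subalgebras of $\Lie(G)$ is equivariant for the adjoint action, the subgroup scheme ${}_1H := H \cap {}_1G$ is normalized by $T$; second, $P_0 := N_G({}_1H)$ is a parabolic subgroup of $G$, since $\Lie(P_0) = N_{\Lie(G)}(\Lie(H)) \supseteq \Lie(H) + \Lie(T) = \Lie(B)$ and (using that $\Lie(H)$ is a $T$-stable Lie subalgebra containing $\Lie(U)$) $B \subseteq P_0$ — and $H \subseteq P_0$, so that $N_G(H) = N_{P_0}(H)$. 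On the other hand, the image $F(H) \subseteq G^{(1)}$ is a subgroup scheme containing $F(U) = U^{(1)}$, hence horospherical, and of height $m-1$; by the induction hypothesis it is strongly horospherical, so its normalizer $\overline{P} \subseteq G^{(1)}$ is a parabolic containing $U^{(1)}$, hence standard, hence contains $T^{(1)} = F(T)$. Thus $F(T)$ normalizes $F(H)$, and therefore $T$ normalizes $\widetilde{H} := F^{-1}(F(H)) = {}_1G \cdot H$.

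The remaining, and main, difficulty is to glue these two pieces: $H$ is in general \emph{not} normal in $\widetilde{H}$ (this is exactly what fails for $p = 2$, cf.\ Example~\ref{Binfinity}), so one cannot simply lift the normalizer through $1 \to {}_1H \to H \to F(H) \to 1$. Here I would split according to whether $\Lie(H) = \Lie(G)$ or not. If $\Lie(H) = \Lie(G)$, then ${}_1G \subseteq H$ by Theorem~\ref{th: DG height one}\eqref{th: DG height one item ii}, so $H = F^{-1}(F(H))$ and $N_G(H) = F^{-1}\!\bigl(N_{G^{(1)}}(F(H))\bigr)$, which is the preimage of a parabolic under Frobenius, hence parabolic and containing $T$. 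If $\Lie(H) \subsetneq \Lie(G)$, then $\Lie(H)$ cannot be an ideal of $\Lie(G)$ — this is where $p \geq 3$ enters: from $[X_{-\alpha}, V_\alpha] = 2X_{-\alpha}$ one gets $X_{-\alpha} \in \Lie(H)$ for every simple root $\alpha$, and then all negative root vectors lie in $\Lie(H)$ by the bracket relations and the structure-constant analysis of Section~\ref{sec: Lie algebras sec}, which would force $\Lie(H) = \Lie(G)$. Hence $P_0 = N_G({}_1H) \subsetneq G$ is a \emph{proper} parabolic, and one descends along $P_0 \twoheadrightarrow L_0$, where $L_0$ is a Levi quotient of strictly smaller dimension: as $R_u(P_0) \subseteq U \subseteq H$, the subgroup $H$ is determined by its image $\overline{H}$ in $L_0$, which is horospherical in $L_0$; using $N_G(H) = N_{P_0}(H)$, this identifies $N_G(H)$ with the preimage of $N_{L_0}(\overline{H})$, which is parabolic by the induction on $\dim G$. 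The delicate technical point in this last step — and the other place where $p \geq 3$ and the $G_2$-in-characteristic-$3$ analysis of Section~\ref{sec: Lie algebras sec} are needed — is controlling the possibly non-reduced unipotent radical of $P_0$ so that the descent to $L_0$ is legitimate.
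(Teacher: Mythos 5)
Your overall scaffolding is sound and consistent with the paper's toolkit: the reduction to showing $T \subseteq N_G(H)$, the induction on height with base case Corollary~\ref{cor: smooth case}, the use of Proposition~\ref{prop:LieH-T-stable}\eqref{prop: connected smooth casev2 item i} to get $T$-stability of $\Lie(H)$ and hence of ${}_1H$, and the case $\Lie(H)=\Lie(G)$ (where ${}_1G \subseteq H$, so $H = F^{-1}(F(H))$ and the height induction finishes) are all correct. The gap is in the remaining case, which is the heart of the matter: the descent along $P_0 \twoheadrightarrow L_0$. For that step you need a normal unipotent subgroup scheme $N \trianglelefteq P_0$ with $N \subseteq H$ and $P_0/N$ reductive of smaller dimension, and you take $N = R_u(P_0)$, i.e.\ implicitly $R_u((P_0)_{\mathrm{red}})$. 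But $P_0 = N_G({}_1H)$ is in general a \emph{non-reduced} parabolic, and $R_u((P_0)_{\mathrm{red}})$ is then \emph{not} normal in $P_0$ when $p \geq 3$. The simplest instance: in $\SL_2$, conjugating $u_\alpha(y)$ by a matrix whose lower-left entry $c$ satisfies $c^p=0$ produces a lower-left entry $-c^2y$, which is nonzero for $p\geq 3$; hence $U$ is not normal in $F^{-1}(B)$. A genuinely relevant instance within your induction: for $G=\SL_3$ and $H = {}_1G\,P^{\alpha_1}\cap P^{\alpha_2}$ (a non-reduced parabolic, hence horospherical of positive height with $\Lie(H)\subsetneq\Lie(G)$), one computes $\Lie(P_0)=\Lie(H)$, $(P_0)_{\mathrm{red}}=P^{\alpha_2}$, and $P_0$ contains the height-one subgroup with Lie algebra $\mathfrak g_{-\alpha_1-\alpha_2}$, which does not normalize $R_u(P^{\alpha_2}) \supseteq U_{\alpha_1+\alpha_2}$ (same $\SL_2$ computation inside $G_{\alpha_1+\alpha_2}$). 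So the quotient map $P_0 \to L_0$ you invoke does not exist in the naive sense, and the largest normal unipotent subgroup scheme of $P_0$ contained in $H$ is in general too small to yield a reductive quotient of smaller dimension. You flag this as ``the delicate technical point,'' but it is not a routine technicality: it is precisely the obstruction that forces the paper onto a different route, and as stated your induction on $\dim G$ does not close.

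For comparison, the paper's proof also inducts on the height of $H$ but never descends to a Levi. Instead it sets $P = HT$, computes $X^\ast(P)$ from the classification of parabolics (Lemmas~\ref{characters geq5} and~\ref{characters char 3}), writes $H\cap T$ as an intersection of kernels of characters, uses the induction hypothesis applied to $F(H)$ together with Knop's $\SL_2$ classification (Theorem~\ref{thm: Knop horo subgroups SL2}) to show those characters extend to $P$, and thereby produces an explicit strongly horospherical $H' = \bigcap_\chi \ker(\chi) \subseteq P$ with $N_G(H')=P$. The gluing you identify as the main difficulty is then resolved by the extension argument: ${}_1H={}_1H'$ (a Lie-algebra comparison via Theorem~\ref{th: DG height one}), $F(H)=F(H')$, and $H'\subseteq H$ together force $H=H'$. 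If you want to salvage your approach, you would need either to prove normality of a suitable unipotent kernel inside non-reduced parabolics (false in general, as above) or to replace the Levi descent by some version of this explicit construction.
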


\begin{proof}
To simplify the argument, we assume that $G$ is simple. The general case is similar, but slightly more technical to write down.
We proceed by induction on the height of $H$. 
If the height of $H$ is equal to $0$, then $H$ is smooth, and we conclude by Corollary~\ref{cor: smooth case}. 

\smallskip

Assume now that the statement holds for any horospherical subgroup of height at most $m \in \mathbb{N}_{\geq 0}$. Let $H$ be a horospherical subgroup of height $m+1$. Then $P := HT$ is a parabolic subgroup of $G$, and we will prove that $P = N_G(H)$. 

By Theorems~\ref{thm classification parabolic subgroups} and~\ref{thm classification parabolics 2}, there exists a subset $I \subset \Delta$ such that
\begin{equation}\label{eq: description of P in the proof of the connected case}
P = \bigcap_{\nu \in \Delta \setminus I} Q^\nu \quad \text{with} \quad Q^\nu = {}_{r_\nu} G P^\nu \quad \text{or} \quad Q^\nu = (F^{r_\nu - 1})^{-1}(K) P^\nu,
\end{equation}
where the second case occurs only if $p = 3$ and $G$ is of type $G_2$. Here, $K$ denotes the kernel of the very special isogeny $\pi_G\colon G \to \overline{G}$ (see Section~\ref{sec: very special isogeny}); \( r_\nu \geq 0 \) in the first case, and \( r_\nu \geq 1 \) in the second. The subgroup $P^\nu$ denotes the maximal reduced parabolic subgroup of $G$ associated with the simple root~$\nu$. Then, as discussed in \Cref{sec character lattice of parab subgroups}, the character group of~$P$ is of the form
\[
X^\ast(P) = \bigoplus_{\nu \in \Delta \setminus I} \mathbf{Z} \, p^{r_\nu} \varpi_\nu.
\]

Note that $P_{\mathrm{red}} = H_{\mathrm{red}}T = N_G(H_{\mathrm{red}})$, where the second equality follows from the smooth case (Corollary~\ref{cor: smooth case}) and Remark~\ref{rk: P=HT for strongly horo}. Since $H \cap T$ contains $H_{\mathrm{red}} \cap T$, and since $H_{\mathrm{red}}$ is obtained by taking kernels of characters of $P_{\mathrm{red}}=\bigcap_{\nu \in \Delta \setminus I} P^\nu$, only the fundamental weights associated to $\nu \in \Delta \setminus I$ can appear when writing $H \cap T$ as an intersection of kernels of characters of $T$. Thus,
\begin{equation} \label{eq: H cap T}
H \cap T = \bigcap_{j \in F} \ker\left(\sum_{\nu \in \Delta \setminus I} l_{\nu,j} p^{s_{\nu,j}} \varpi_\nu \right) \subset T,
\end{equation}
with $F$ a finite set, $s_{\nu,j} \geq 0$, and $l_{\nu,j} \geq 0$, which—when nonzero—must be coprime to $p$.

We will first show that all the $s_{\nu,j}$ that appear in \eqref{eq: H cap T} satisfy $s_{\nu,j} \geq r_\nu$, which will allow us to define the strongly horospherical subgroup
\begin{equation} \label{def: H'}
H' := \bigcap_{j \in F} \ker\left(\sum_{\nu \in \Delta \setminus I} l_{\nu,j} p^{s_{\nu,j}} \varpi_\nu \right) \subset P
\end{equation}
whose normalizer is $P$ (Proposition~\ref{prop characterization strongly horo car p>2}~\eqref{prop characterization strongly horo car p>2 item ii}). Then we will show that $H = H'$.

\smallskip

Let $L := F(H) \simeq H / {}_1 H \subset G^{(1)}$. By the induction hypothesis, since the height of $L$ is one less than that of $H$, the subgroup $L$ is strongly horospherical. 
Additionally, we have
\[
L \cap T^{(1)} = F(H \cap T) = \bigcap_{j \in F} \ker \left( \sum_{\nu \in \Delta \setminus I} l_{\nu,j} p^{s_{\nu,j} - \varepsilon(\nu,j)} \varpi_\nu^{(1)} \right) \subset T^{(1)},
\ \ 
\text{where}
\ \varepsilon(\nu,j) = 
\begin{cases}
1 & \text{if } s_{\nu,j} \geq 1, \\
0 & \text{if } s_{\nu,j} = 0.
\end{cases}
\]
Hence, since $L$ is strongly horospherical, we deduce that
\[
L = \bigcap_{j \in F} \ker \left( \sum_{\nu \in \Delta \setminus I} l_{\nu,j} p^{s_{\nu,j} - \varepsilon(\nu,j)} \varpi_\nu^{(1)} \right) \subset N_{G^{(1)}}(L)=LT^{(1)}.
\]
On the other hand, the character group of $LT^{(1)} = F(P)$ is given by
\[
X^\ast(LT^{(1)}) 
= X^\ast(F(P)) 
= \bigoplus_{\nu \in \Delta \setminus I} 
    \mathbb{Z} \cdot p^{r_\nu - \varepsilon(\nu)} \varpi_\nu^{(1)},
\quad \text{where} \quad
\varepsilon(\nu) =
\begin{cases}
1 & \text{if } r_\nu \geq 1, \\
0 & \text{if } r_\nu = 0.
\end{cases}
\]
Thus, we deduce that for any $\nu \in \Delta \setminus I$, if $l_{\nu,j} \neq 0$, then
\[
s_{\nu,j} - \varepsilon(\nu,j) \geq r_\nu - \varepsilon(\nu),
\]
hence either $s_{\nu,j} \geq r_\nu$, or $(s_{\nu,j}, r_\nu) = (0,1)$.

	It remains to check that, if $l_{\nu,j} \neq 0$ and $r_\nu = 1$, then $s_{\nu,j} \neq 0$. To see this, let $\gamma$ be the sum of all simple roots and denote as $G_\gamma$ the corresponding copy of $\SL_2$, with fundamental weight $\omega$. Since the multiplicity of $\nu$ in $\gamma$ is equal to one, the weight $\varpi_\nu$ restricts to $\omega$ on the maximal torus of $G_\gamma$. Next, consider the intersection $H \cap G_\gamma$, which is a 
strongly horospherical subgroup of $G_\gamma$ (see Section~\ref{sec: SL2 case}). 
Since $r_\nu =1$, there exists an integer $m_\gamma \geq 1$ such that
\begin{equation} \label{eq: restriction to Ggamma}
H \cap G_\gamma = \bigcap_{j \in F} \ker(l_{\nu,j} p^{s_{\nu,j}} \omega) = \ker(m_\gamma p \omega) \subset Q^\nu \cap G_\gamma = {}_1 G_\gamma (B \cap G_\gamma), 
\end{equation}
and
\[
m_\gamma p = \gcd_{j \in F}(l_{\nu,j} p^{s_{\nu,j}}).
\]
In particular, we must have $s_{\nu,j} \geq r_\nu = 1$ for every $j$ with $l_{\nu,j} \neq 0$. We can thus define $H'$ as in \eqref{def: H'}.
Notice that if $p=3$ and the group $G$ has a simple factor of type $G_2$, then the choice of $\gamma$ (in particular, being a \emph{short} root containing $\nu$ in its support) guarantees the last equality in \eqref{eq: restriction to Ggamma}. Indeed, in all other cases $r_\nu=1$ implies necessarily that $Q^\nu = {}_1GP^\nu$, while in this last case the existence of the very special isogeny allows for a second possibility, namely $Q^\nu = KP^\nu$. Here the fact that $\gamma$ is short is crucial: indeed, for a long root $\delta$ containing $\nu$ it its support, we have instead $Q^\nu \cap G_\delta= B \cap G_\delta$.

\smallskip

We claim that $H = H'$. First, by construction, we have
\[
F(H) = F(H') = L.
\]
Thus, to conclude, it suffices to show:
\[
\text{(a)} \quad {}_1 H = {}_1 H', \qquad \text{(b)} \quad  H' \subset H.
\]
Indeed, these two conditions imply that both $H$ and $H'$ are extensions of $L$ by ${}_1 H$, and since one is contained in the other, they must coincide.
\[
\begin{tikzcd}[column sep=large, row sep=small]
0 \arrow[r] 
  & {}_1H \arrow[r] 
  & H \arrow[r] 
  & F(H) \arrow[r] 
  & 0 \\
0 \arrow[r] 
  & {}_1H' \arrow[r] \arrow[u, equal]
  & H' \arrow[r] \arrow[u, hook]
  & F(H') \arrow[r] \arrow[u, equal]
  & 0
\end{tikzcd}
\]

\smallskip

\textbf{(a):} This is equivalent to showing that $\Lie(H) = \Lie(H')$ (see Theorem~\ref{th: DG height one}~\eqref{th: DG height one item i}). 
We know from Proposition~\ref{prop: connected smooth casev2}~\eqref{prop: connected smooth casev2 item i} that both of these Lie subalgebras are $T$-stable and that they contain $\Lie(U)$. 
Moreover, by construction, $H \cap T = H' \cap T$, so
\[
\Lie(H) \cap \Lie(T) = \Lie(H') \cap \Lie(T),
\]
and it remains to check that the intersections with $\Lie(U_-)$ coincide.

On one hand, we have
\begin{equation} \label{eq: inclusion for (a)}
\Lie(H) \cap \Lie(U_-) \subset \Lie(P) \cap \Lie(U_-) = \Lie(H') \cap \Lie(U_-),
\end{equation}
since $P \cap U_- = H' \cap U_-$ by Lemma~\ref{lem heights}, and hence $\Lie(P) \cap \Lie(U_-) = \Lie(H') \cap \Lie(U_-)$.
Assume for contradiction that the inclusion \eqref{eq: inclusion for (a)} is strict; that is, there exists some $\alpha \in \Delta \setminus I$ such that 
\[
\mathfrak{g}_{-\alpha} \cap \Lie(H) = 0 \quad \text{while} \quad \mathfrak{g}_{-\alpha} \cap \Lie(H') \neq 0.
\]
This means that $U_{-\alpha} \cap H = 1$ while $U_{-\alpha} \cap H' \neq 1$. 
By Theorem~\ref{thm: Knop horo subgroups SL2}, and since $p \geq 3$, intersecting with the corresponding copy $G_\alpha \simeq \SL_2$ yields the inclusion
\[
H \cap G_\alpha \subset B \cap G_\alpha,
\]
and thus, multiplying by $T \cap G_\alpha$ gives
\[
P \cap G_\alpha = (HT) \cap G_\alpha \subset B \cap G_\alpha,
\]
which in turn implies $H' \cap U_{-\alpha} = P \cap U_{-\alpha} = 1$, a contradiction.

So the inclusion~\eqref{eq: inclusion for (a)} must be an equality. That is, (a) holds.

	\smallskip

\textbf{(b):} First note that $F(H')=F(H)$ implies that $H'_{\red}=H_{\red}$.
Since $H'$ is strongly horospherical, it is generated by $U$, together with $H' \cap T$ and the groups $H' \cap U_{-\nu}$ for $\nu \in \Delta$ ranging over the simple roots. 
In order to conclude that $H' \subset H$, it is thus enough to verify that
\[
H' \cap T = H \cap T, \qquad 
H' \cap U_{-\nu} = H \cap U_{-\nu}, 
\quad \text{for all } \nu \in \Delta.
\]

The first equality holds by the very definition of $H'$. Let us now examine the second:

\begin{itemize}
  \item If $\nu \in I$, then $U_{-\nu} \subset H_{\mathrm{red}}$, so in particular,
  \[
  U_{-\nu} = U_{-\nu} \cap H = U_{-\nu} \cap H'.
  \]
  
\item If $U_{-\nu} \cap H = 1$, then the same argument as in part~\textbf{(a)} yields $U_{-\nu} \cap H' = 1$.
   
  \item Now assume that $\operatorname{ht}(U_{-\nu} \cap H) = s_\nu \geq 1$, and let $\operatorname{ht}(U_{-\nu} \cap H') = s'_\nu \in \N_{\geq 0}$. Note that, by step~\textbf{(a)}, we must have $s'_\nu \geq 1$. Then
\[
s_\nu - 1 
= \operatorname{ht}(F(H \cap U_{-\nu})) 
= \operatorname{ht}(L \cap U_{-\nu}^{(1)}) 
= \operatorname{ht}(F(H' \cap U_{-\nu})) = s'_\nu - 1 \geq 0,
\]
which immediately gives $s_\nu = s'_\nu$.
\end{itemize}
This concludes the proof of the facts \textbf{(a)} and \textbf{(b)}, and thus also the proof of the proposition.
\end{proof}

\begin{example}
Let us clarify at which step the above proof fails in the unique example of a horospherical subgroup 
$H = \mathcal{B}(\infty) \subset G = \SL_2$ that is not strongly horospherical when $p = 2$. In this case, we have:
\begin{align*}
&H \cap T = \ker(2\varpi_\alpha) \simeq \boldsymbol{\mu}_2 \subset T, \\
&P := HT = F^{-1}(B) \quad \Rightarrow \quad X^\ast(P) = \mathbb{Z} 2\varpi_\alpha, \\
&H' := \ker(2\varpi_\alpha)=F^{-1}(\boldsymbol{\mu}_2 \ltimes U) \subset P.
\end{align*}
Hence, we obtain, as in the proof of Theorem~\ref{th: general case}, a strongly horospherical subgroup $H' \subset \SL_2$. But $H' \not\subset H$ (which is impossible for $p \geq 3$). Indeed,
\[
H \cap U_{-\alpha} = 1 
\quad \text{while} \quad 
H' \cap U_{-\alpha} = \boldsymbol{\alpha}_2.
\]
\end{example}

\subsection*{Acknowledgments}
{We are grateful to the anonymous referee for the careful reading of the former version of this paper and for their helpful comments.}
We also thank K\'evin Langlois, Michel Brion and Friedrich Knop for useful discussions related to this work.

The first author  was supported by the DFG through the research grants Le 3093/5-1 and Le 3093/7-1 (project numbers 530132094; 550535392).

The second author acknowledges the support of the CDP C2EMPI, as well as the French State under the France-2030 programme, the University of Lille, the Initiative of Excellence of the University of Lille, the European Metropolis of Lille for their funding and support of the R-CDP-24-004-C2EMPI project.

\bibliographystyle{alpha} 
	\bibliography{biblio} 	
 
\end{document}